\begin{document}

\title[Well-posedness for a transmission problem]{Well-posedness for a transmission problem connecting first and second-order operators}

\author[H. A. Chang-Lara]{H\'ector A. Chang-Lara}
\address{Department of Mathematics, CIMAT, Guanajuato, Mexico}
\email{hectorchang@cimat.mx}

\begin{abstract}
We establish the existence and uniqueness of viscosity solutions within a domain $\W\subseteq\R^n$ for a class of equations governed by elliptic and eikonal type equations in disjoint regions. Our primary motivation stems from the Hamilton-Jacobi equation that arises in the context of a stochastic optimal control problem.
\end{abstract}

\subjclass{35D40, 35B51, 35F21, 49L12, 49L25}
\keywords{Discontinuous dynamics, transmission problems, viscosity solutions, comparison principle}

\maketitle

\section{Introduction}

Let $\W \subseteq\R^n$ be an open set, and let $\W_B\subseteq\W$ also be open, non-empty, with $\W_E := \W\setminus \overline{\W_B}$ being non-empty as well. We consider a stochastic optimal control problem in which the objective is to minimize the expected time taken by a particle to travel from an initial position $x\in \W$ to its first exit from $\W$. The controller can determine the direction of the particle at any given moment when the particle is in the region $\overline \W_E\cap \W$, maintaining a constant speed, assumed to be one without loss of generality. In the complementary region $\W_B$, the particle follows a Brownian motion. This dynamics allow the particle to switch between the regions multiple times before exiting. See Figure \ref{fig:1}.

\begin{figure}[h]
    \centering
    \includegraphics[width=12cm]{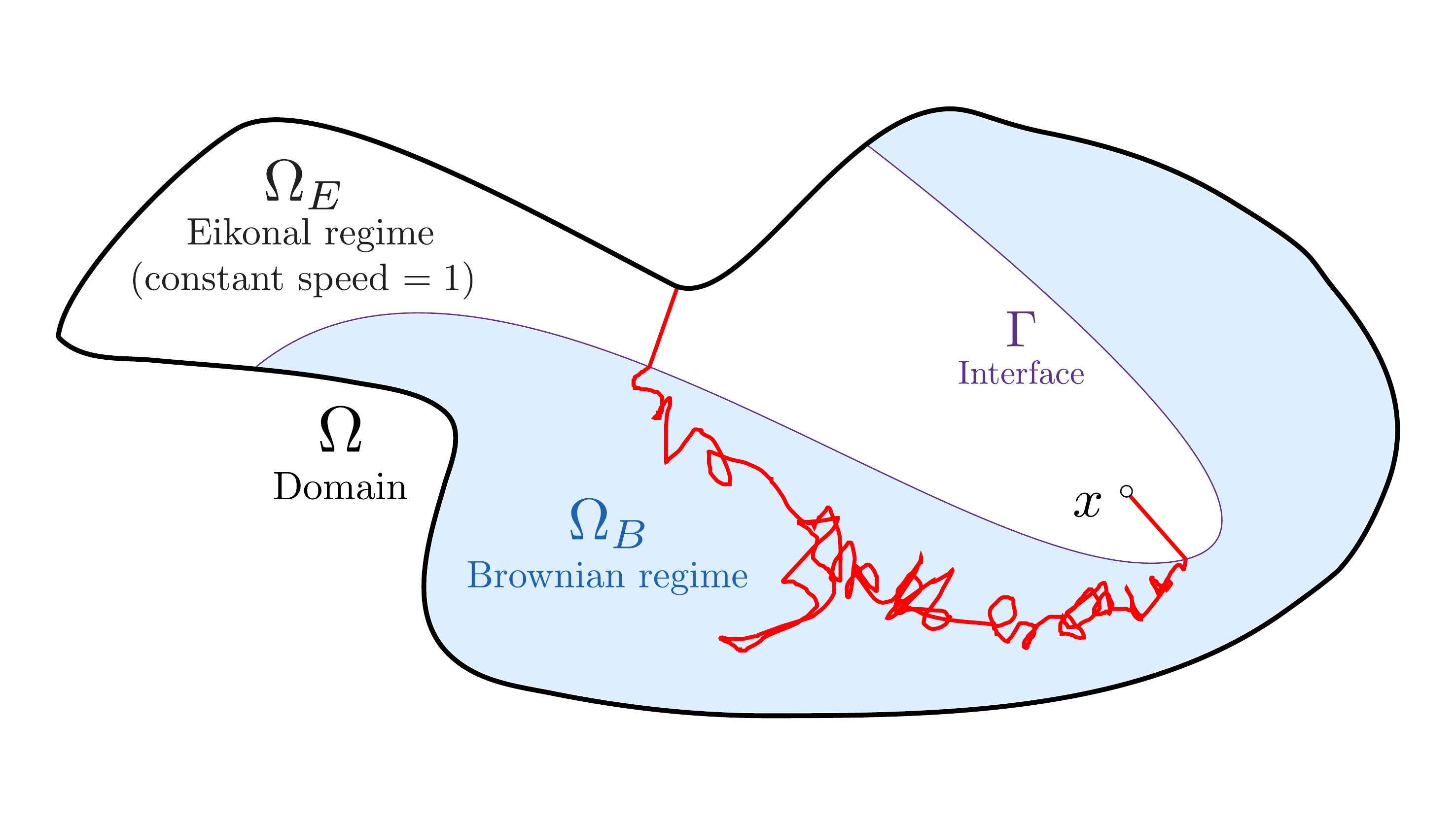}
    \caption{A possible realization of a path from $x$ to $\p\W$. The dynamics on $\W_E$ are chosen by the controller and follow a Brownian motion in $\W_B$.}
    \label{fig:1}
\end{figure}

The analysis of the optimal strategy is closely related to the Hamilton-Jacobi-Bellman equation satisfied by the value function $u:\overline \W\to\R$, defined as the least expected exit time when the particle starts at $x$. In the regions $\W_E$, $\W_B$, and $\partial\W$, the value function satisfies an eikonal equation, a Poisson equation, and a boundary condition, respectively
\begin{align*}
\begin{cases}
|Du| = 1 \text{ in } \W_E,\\
\frac{1}{2}(-\D)u = 1 \text{ in } \W_B,\\
u=0 \text{ in } \p\W
\end{cases}
\end{align*}
In order to obtain a unique solution, we anticipate an additional transmission condition over the interface $\Gamma := \partial \W_E\cap \W = \partial \W_B\cap \W$. The primary goal of this work is to reveal this transmission condition from a partial differential equations point of view. Moreover, we establish the existence and uniqueness of viscosity solutions for a broader family of equations, using the model described above as the principal guiding example.

A discrete implementation of this problem suggests that over the interface $\Gamma$ either one of the equations must hold. Indeed, we have observed that numerical solutions converge to a limit whenever the points in the discretization may include or not points on the interface. When we have an interface point a natural condition would be to consider the two possible dynamics, eikonal or random walk, and choose the one leading to the least expected time. We plan to present a deeper analysis of our findings in a forthcoming work.

The relaxed notion of the problem where either of two given equations can hold over the boundary has been previously used in the fields of optimal control theory and viscosity solutions for quite some time \cite[Section V.4]{MR1484411} and \cite[Section 7]{MR1118699}. It is the natural requirement for achieving stability of viscosity solutions. Specifically, for the sub-solution equations, we must consider
\[
\begin{cases}
|Du| -1 \leq 0 \text{ in } \W_E,\\
\frac{1}{2}(-\D)u -1 \leq 0 \text{ in } \W_B,\\
\min(|Du|-1,\frac{1}{2}(-\D)u-1) \leq 0 \text{ on } \Gamma.
\end{cases}
\]
Meanwhile, for the super-solution equations, we reverse the direction of all inequalities and replace the minimum with a maximum in the expression over the interface
\[
\begin{cases}
|Du| -1\geq 0 \text{ in } \W_E,\\
\frac{1}{2}(-\D)u -1\geq 0 \text{ in } \W_B,\\
\max(|Du|-1,\frac{1}{2}(-\D)u-1) \geq 0 \text{ on } \Gamma.
\end{cases}
\]
A solution is then a function that simultaneously satisfies both sub-solution and super-solution conditions. Colloquially, we say that
\begin{align}\label{eq:main}
\begin{cases}
|Du| -1 = 0 \text{ in } \W_E,\\
\frac{1}{2}(-\D)u -1= 0 \text{ in } \W_B,\\
\text{Either $|Du|-1$ or $\frac{1}{2}(-\D)u -1 = 0$  on $\Gamma$}.
\end{cases}
\end{align}
Although it might appear that the condition over the interface is too weak to guarantee the uniqueness of solutions, our aim is to demonstrate that, in the context of viscosity solutions and flat interfaces, this is surprisingly not the case. We will do this by deducing a stronger equation for the interface and establishing a comparison principle.

There are several challenges associated with the comparison principle for this problem. Firstly, note that the operator governing this equation over $\W$ as a whole is discontinuous across $\Gamma$. This lack of translation invariance renders the comparison principle a non-trivial question. On the other hand, we may hope to apply the comparison principle for general boundary conditions, as established by Barles in \cite{MR1249178}. However, the eikonal operator on $\Gamma$ is not monotone in the exterior normal direction to $\partial\W_B$, which prevents the direct application of such theory.

\subsection{Main results}

In an effort to enhance the applicability of our results to related problems, we have extended our hypotheses beyond the eikonal/Poisson equations illustrated in this introduction. The corresponding hypotheses will be announced in each section. For the moment and to keep things simple, let us state our contributions in the context of the example we have already discussed.

We demonstrate as a consequence of Theorem \ref{thm:strong} that for any $C^2$ regular interface $\Gamma$, the problem \eqref{eq:main} is equivalent to the following stronger equation
\begin{align}\label{eq:main1.5}
\begin{cases}
|Du| -1= 0 \text{ in } \W_E\cup \Gamma,\\
\frac{1}{2}(-\D)u - 1 =0\text{ in } \W_B.
\end{cases}
\end{align}
Colloquially speaking, we could say that the eikonal mechanism dominates the dynamics at the interface. For Theorem \ref{thm:strong} we mainly need a coercivity from the second order operator resulting from uniform ellipticity.

For flat interfaces we establish a comparison principle for the equation \eqref{eq:main1.5} in Corollary \ref{cor:comp}. Our general comparison principle is stated in Theorem \ref{thm:comp} for continuous sub and super-solutions with their corresponding equations being separated by some gap (see the hypotheses \ref{hyp:sub} and \ref{hyp:sup} at the beginning of Section \ref{sec:comparison}). This gap can be taken to be just zero whenever the operators satisfies some further coercivity type assumptions, such as in the case of convex operators (the case in the introduction), parabolic problems, and equations arising from geometric discounted costs, see Corollary \ref{cor:comp} and Remark \ref{rmk:other}. Theorem \ref{thm:comp} also relies on further hypotheses on the operators (see the hypotheses \ref{hyp:hm} and \ref{hyp:hp} also at the beginning of Section \ref{sec:comparison}). 

In the preliminary section we have included, besides the main definitions, a few results which are direct sapplications of the classical theory. For instance, the existence of viscosity solution by Perron's method stated in Theorem \ref{thm:existence}, relies on the global regularity for eikonal type and uniformly elliptic equations.

\subsection{Related work}

One of the main examples of problems that combine disjoint regions with diffusion and eikonal type regimes, emerges in the theory of singular stochastic control, as seen in Chapter VIII of the book by Fleming and Soner \cite{MR2179357}. The representative equation for the value function takes the following form, known as a \textit{gradient constrained problem}
\[
\max(H(Du),Lu)=0 \text{ in }\W.
\]

In the equation above, $L$ is a second-order elliptic operator, and $H$ is typically a convex function. A stochastic control problem that motivates this equation for $Lu=\tfrac{1}{2}(-\D) u-1$ and $H(Du)=|Du|-1$ can be described as follows: We aim to minimize the expected exit time of a particle starting at $x\in \W$, such that at each instant, we can decide to move either with Brownian motion or with speed one in a chosen direction. In a few words, this is an extension of the problem in the introduction where the controlled also can conveniently choose the region $\W_B$.

As usual, the value function $u$ represents the least expected exit time when the particle starts from $x$. From a dynamic programming principle, we derive the Hamilton-Jacobi equation of singular control provided above. This model has significant applications in Merton's portfolio problem and spacecraft control, with multiple references available in \cite[Section VIII.7]{MR2179357}. Other motivations for gradient constrained problems, arising from variational inequalities, can be found in applications of elastoplasticity of materials \cite{MR195316}. The survey \cite{MR3393319} explore the connections with recent developments for the obstacle problem.

The literature concerning the analysis of solutions for gradient constrained problems with a convex $H$ is extensive. Notable works include the $C^{1,1}$ optimal regularity of solutions by Evans \cite{MR544887,MR529814}, Wiegner \cite{MR607553}, Ishii and Koike \cite{MR693645}, and Soner and Shreve \cite{MR1001925,MR1104105}. Brezis and Sibony \cite{MR346345} show the equivalence with an obstacle problem. Recent works include those by Andersson, Shahgholian, and Weiss \cite{MR2989443}, Hynd \cite{MR2898887,MR3023063,MR3621845}, Hynd and Mawi \cite{MR3563780}, and Safdari \cite{MR3353792,MR3667699,MR4200759}. In the last few years, attention has also been given to cases where $H$ is non-convex, motivated by applications in optimal dividend strategies for multiple insurances \cite{MR3982209}. See Safdari \cite{MR4205181} and our own collaboration with Pimentel \cite{MR4249793} for the analysis of solutions.

In the previous scenario, the solution is the one that determines the regions where either the first-order or the second-order operator is active. Furthermore, the solution must globally satisfy $Lu\leq 0$ and $H(Du)\leq 0$, which imposes significant rigidity on the function. The problem considered in this paper fixes the regions and does not assume global relations like in the gradient constrained problem; as a result, solutions are expected to be more flexible across this interface. In our case, we no longer expect better regularity than Lipschitz continuity, this is illustrated by the examples in the Section \ref{sec:exis_one_d} and the Section \ref{sec:visc_sol}.

Another research direction related to our model involves transmission problems. These typically deal with differential equations over given disjoint domains connected by some prescribed condition over their common boundaries. Borsuk's book \cite{MR2676605} provides a detailed exposition of second-order problems. Some recent regularity results have been established by Caffarelli, Soria-Carro, and Stinga in \cite{MR4228861}, and by Soria-Carro and Stinga in \cite{https://doi.org/10.48550/arxiv.2207.13772}. Problems connecting operators with different (fractional) orders have been studied, for instance, by Kriventsov in \cite{MR3356996}, D'Elia, Perego, Bochev, and Littlewood in \cite{MR3501316}, and Capana and Rossi in \cite{MR4534535}. An intermediate step in our proof of Theorem \ref{thm:strong} involves uncovering a transmission-type equation for our problem (Lemma \ref{lemma1}, Corollary \ref{cor:strong_eq}, and Lemma \ref{lemma1.5}).

The works referenced in the previous paragraph feature some ellipticity condition on both sides of the interface. Problems governed by hyperbolic or eikonal-type operators on both sides are also very relevant in applications and have been studied recently. These works are motivated by models in deterministic optimal control. The contributions we have in mind include those by Barles, Briani, and Chasseigne \cite{MR3092359,MR3206981}, Lions and Souganidis \cite{MR3556345,MR3729588}, Imbert and Monneau \cite{MR3690310}, and more recently Barles, Briani, Chasseigne, and Imbert \cite{MR3922443}. Our weak formulation on the interface is analogous to the so-called \textit{natural} junction condition found in \cite[Equations (1.3) and (1.4)]{MR3922443}.

In a recent publication by Imbert and Nguyen \cite{MR3709301}, the authors consider junction problems with second order operators on either side of the interface or junction. The model in this case assumes that the equations degenerate to first order problem at the junction. As in the models discussed in the previous paragraph, there is an additional junction condition from where it is shown that the problem is well posed. Similar to one of our results, an intermediate step consists the equivalence between a relaxed and a strong notion of viscosity solutions (know as flux limited solution).

In contrast to the first order problems, the presence of uniformly elliptic effects up to the interface seems to make our model rigid enough to yield uniqueness of solutions by itself, without additional conditions on the interface which in this case may lead to an over-determined ill-posed problem.

\subsection{Main ideas}

In this brief section we aim to emphasize the fundamental points that underlie the main theorems presented in this paper. We acknowledge that a significant portion of the article is devoted to discussing technical modifications on well-established classical notions in the theory of viscosity solutions, these have been included for the sake of completeness. However, this may obscure the novel ideas and challenges that we have encountered in our work and will appear toward the Section \ref{sec:strong} and the Section \ref{sec:comparison}. To address this concern, we would like to provide a brief discussion in this introduction.

The upcoming presentation is informal and assumes a certain level of familiarity with the concept of viscosity solutions, which will be revisited in Section \ref{sec:preliminaries}.

\subsubsection{Strong equation}

To show that the weak equation \eqref{eq:main} implies the strong equation \eqref{eq:main1.5}, we demonstrate as an intermediate step that \eqref{eq:main} can also be characterized with test functions that may have discontinuous gradients at the interface. In other words, they belong to the following space of functions (here $B_r(x_0)\ss\W$)
\[
C(B_r(x_0))\cap C^2(B_r(x_0)\cap (\W_E\cup\Gamma))\cap C^2(B_r(x_0)\cap (\W_B\cup\Gamma)).
\]
Whenever the contact occurs at the interface, we find that either the eikonal equation holds on the \underline{Brownian} side, or there is a transmission condition between the normal derivatives.

To be more specific, we show that \eqref{eq:main} implies the following problem in an appropriate viscosity formulation
\begin{align*}
\begin{cases}
|Du| -1= 0 \text{ in } \W_E,\\
\frac{1}{2}(-\D)u - 1=0 \text{ in } \W_B,\\
\text{Either $|Du_B|-1$ or $\p_\nu u_E-\p_\nu u_B = 0$  on $\Gamma$}.
\end{cases}
\end{align*}
In the equation above, we have denoted by $\nu$ the interior normal of $\W_B$ over $\Gamma$ (or the exterior normal of $\W_E$ over $\Gamma$). Moreover, $u_B$ and $u_E$ represent the restrictions of $u$ to $\W_B$ and $\W_E$, respectively. Geometrically, $\p_\nu u_E-\p_\nu u_B$ is positive if the graph of $u$ forms a concave angle along $\G$, and negative if the angle is instead convex.

\subsubsection{Existence and uniqueness of solutions in one dimension}\label{sec:exis_one_d}

Let $\W = (-1,1)$, $\W_E = (-1,0)$, $\W_B = (0,1)$, and $\Gamma=\{0\}$. Our goal is to see that there is at most one viscosity solution $u \in C(\overline{\W})$ for 
\begin{align}\label{eq:1dim}
\begin{cases}
    |u'|-1=0 \text{ in } \W_E,\\
    -u''=0 \text{ in } \W_B,\\
    \text{Either $|u'|-1$ or $-u''=0$ on $\Gamma$},\\
    u(-1) = 0,\\
    u(1) = \a.
\end{cases}
\end{align}

The two equations and the condition at $x=-1$ indicate that for some parameter $\b\in[-1,0]$
\[
u(x)=\begin{cases}
1+\b-|x-\b| \text{ in } \W_E,\\
\a x + (1+2\b)(1-x) \text{ in } \W_B.
\end{cases}
\]
See Figure \ref{fig:my_label}. We would like to see that there is at most one $\b$ for which $u$ is a solution.

\definecolor{mycolor}{RGB}{29,99,163}

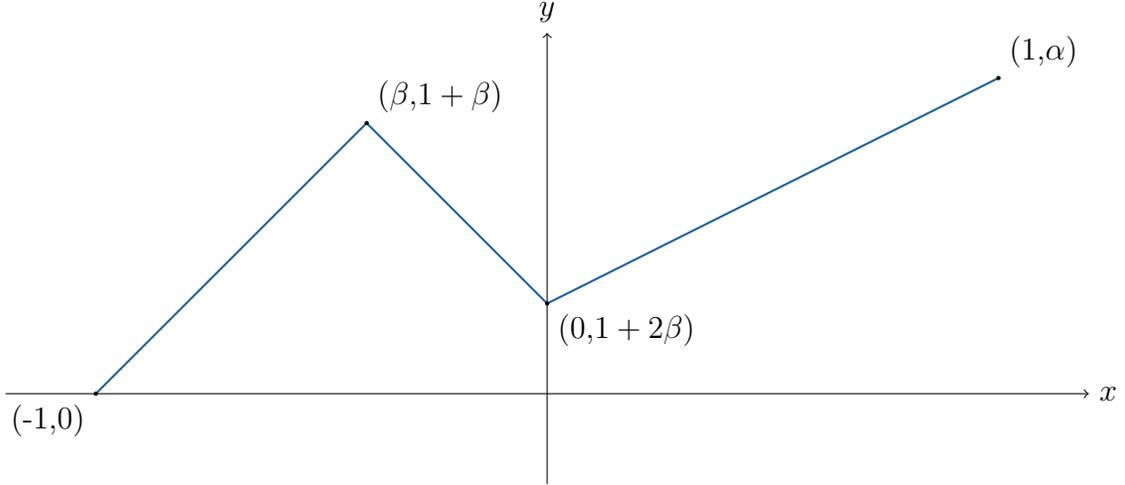
\begin{figure}
    \centering
    \begin{tikzpicture}[scale=6]
    \draw[->] (-1.2,0) -- (1.2,0) node[right] {$x$};
    \draw[->] (0,-0.2) -- (0,0.8) node[above] {$y$};
    
    \draw[thick,mycolor] (-1,0) -- (-0.4,0.6) -- (0,0.2) -- (1,0.7);
    
    \filldraw (-1,0) circle (.1pt) node[below left] {(-1,0)};
    \filldraw (-0.4,0.6) circle (.1pt) node[above right] {($\b$,$1+\b$)};
    \filldraw (0,0.2) circle (.1pt) node[below right] {(0,$1+2\b$)};
    \filldraw (1,0.7) circle (.1pt) node[above right] {(1,$\a$)};
    \end{tikzpicture}
    \caption{Graph of a one dimensional candidate for the solution of \eqref{eq:1dim}.}
    \label{fig:my_label}
\end{figure}

\textbf{Case 1:} $\a \geq 0$. If $\b<0$ then $u$ is not a super-solution because around zero
\[
u(x) = u(0) + \begin{cases}
-x \text{ if } x<0,\\
\gamma x \text{ if } x\geq 0
\end{cases}, \qquad \gamma = \a - (1+2\b) > -1.
\]
Then it can be touched at $x_0=0$ from below by
\[
\varphi(x):=u(0)+\frac{\min(\gamma,1)-1}{2}x + x^2
\]
which does not satisfy $|\varphi'(x_0)|-1\geq 0$ or $-\varphi''(x_0) \geq 0$. The only option left is $\b=0$ in which case
\[
u(x) = \begin{cases}
1+x \text{ if } x<0,\\
(\a-1)x+1 \text{ if } x\geq 0,
\end{cases}
\]
is indeed a viscosity solution. This assertion can be verified in two steps, depending on whether $\alpha$ is larger than $2$ or lies between $0$ and $2$: If $\alpha > 2$, then $u$ can only be touched from below at $x_0 = 0$. In this case, the derivative of the test function is at least $1$, satisfying the super-solution condition. If $\alpha \in [0, 2)$, then $u$ can only be touched from above at $x_0 = 0$. In this case, the derivative of the test function lies between $-1$ and $1$, satisfying the sub-solution condition. If $\alpha = 2$, then $u = 1 + x$, and it clearly satisfies both the sub and super-solution criteria at every point.

\textbf{Case 2:} $\a\in [-2,0)$. Now we can check in a similar fashion as before that $\b=\a/2$ is the only possibility that gives a viscosity solution. This choice of the parameter $\b$ is the one that makes $u$ a translation of the function $-|x|$
\[
u(x) = 1+\b-|x-\b| = 1+\a/2-|x-\a/2|.
\]

\textbf{Case 3:} $\a<-2$. In this case there are no solutions for the given boundary values.

\subsubsection{Comparison principle in one dimension}\label{sec:com_one_d}

Let $\W = (-1,1)$, $\W_E = (-1,0)$, $\W_B = (0,1)$, and $\Gamma=\{0\}$ as in the previous example. Assume that $v$ is a super-solution of
\[
\begin{cases}
    |v'|-1\geq 0 \text{ in } \W_E,\\
    -v''\geq 0 \text{ in } \W_B,\\
   \max(|v'|-1,-v'')\geq 0 \text{ on }\Gamma,
\end{cases}
\]
meanwhile $u$ a sub-solution of the following equation, which leaves a gap $\eta>0$ between both equations
\[
\begin{cases}
    |u'|-1+\eta \leq 0 \text{ in } \W_E,\\
    -u''\leq 0 \text{ in } \W_B,\\
   \min(|u'|-1+\eta,-u'')\leq 0 \text{ on }\Gamma.
\end{cases}
\]

Assuming that both functions are continuous and moreover smooth outside of zero, we will see that a contradiction arises by assuming that the graph of $v$ touches the graph of $u$ from above at the origin.

To the left of zero we find that $u'_E(0) \in [-1+\eta,1-\eta]$. Since $v_E$ is a super-solution touching $u_E$ at zero, we must have that $v_E'(0)\leq -1$.

Let us now examine the behavior of the functions to the right of zero. If $u'_B(0)<-1+\eta$, the graph of $u$ forms a concave angle at zero, with an upper supporting line of slope less than $-1+\eta$. In this case, we can construct a concave paraboloid $\varphi$ that touches $u$ from above at the origin, and further impose $\varphi'(0)<-1+\eta$. However, this contradicts the sub-solution condition at the interface, indicating that $u'_B(0) \geq -1+\eta$ must hold instead. Since $v_B$ touches $u_B$ at zero, we conclude that also $v_B'(0)\geq -1+\eta$. Notice that this step indicates that the eikonal equation gets somehow transmitted towards the Brownian side at the interface.

Putting the two conclusions on $v$ together, we notice that the graph of $v$ forms a convex angle at zero, with a lower supporting plane of slope $-1+\eta/2$. This contradicts $v$ being a super-solution, as it can be touched by a convex paraboloid $\varphi$ with $\varphi'(0)=-1+\eta/2 \in (-1,1)$.

The case with no gap, $\eta=0$, can be recovered by an approximation argument. See the comments after Remark \ref{rmk:other} and Corollary \ref{cor:comp}.

\subsubsection{Challenges and strategies}

The previous and rather simple reasoning in one dimension works because we are assuming some regularity on the solutions. In higher dimensions, the solutions are more flexible because of the variations in the directions parallel to the interface, therefore the comparison principle for viscosity solutions becomes delicate.

Our strategy involves the use of inf/sup regularizations and boundary estimates. We perform inf/sup convolutions along the directions parallel to the flat interface in order to assume that the sub-solution $u$ is semi-convex along the interface, and the super-solution $v$ is semi-concave also along the interface. If we assume by contradiction that $v$ touches $u$ from above at some point $x_0\in \G$, the semi-convexity assumption implies that the functions $C^{1,1}$ at $x_0$ along the interface. In other words, they are squeezed between two paraboloids at $x_0$ and along the interface.

Lemma \ref{lem:44} and its Corollary \ref{cor1} show that if the boundary data for the solution of a uniformly elliptic equation is trapped between two paraboloids, then the solution is differentiable at such point. The main observation is that the normal derivative is well defined. The argument for this proof is due to Caffarelli \cite[Lemma 4.31]{MR787227} and \cite[Theorem 9.31]{Gilbarg-Trudinger2001}. On the other hand, we can see Lemma \ref{lem:H_cons} and Lemma \ref{lem:hopf} as analogous boundary regularity results for solutions of first order equations.

While we can present a fairly general result, our approach does not appear to yield a comparison principle for non-flat interfaces and general non-translation invariant operators.

\subsection{Further questions}

One of the main reasons that justify the analysis of this problem relies on a verification theorem for some particular class of games. In \cite{MR4249793} we proposed and analyzed a Hamilton Jacobi equation for the following one: a particle starts at some position $x\in \W$ and it is driven by two players with opposite goals, the first one wants to maximize the expected exit time from $\W$, meanwhile the second wants to minimize it. The first player chooses $\W_B\ss\W$ and the second player fixes a unit vector field $v:\W_E=\W\sm\overline{\W_B} \to \p B_1$ such that the dynamics are given by the following SDE ($B_t$ is a Brownian motion in $\R^n$)
\[
d\gamma = \mathbbm 1_{\W_E}(\gamma)v(\gamma)dt + \mathbbm 1_{\W_B}(\gamma) dB_t, \qquad \gamma(0)=x.
\]

In a casual manner, we can describe the dynamic of $\gamma$ as being influenced by two players as follows: the first one determines who drives, however when this player takes the wheel it does so in some random fashion and without any preference for any given direction (a drunkard's walk). Meanwhile the second (and sober) player, whenever it gets the opportunity, aims to escape $\W$ by driving at maximum speed in some given set of directions.

The value function for this game is the least expected exit time and should satisfy the gradient constrained problem 
\[
\begin{cases}
\min(|Du|-1,\tfrac{1}{2}(-\D)u-1) = 0 \text{ in } \W,\\
u=0 \text{ on } \p\W.
\end{cases}
\]

The corresponding verification theorem remains an open problem. One possible way to address this question would be to fix the set $\W_B$ and solve a corresponding optimal control problem (for the second player) and then consider the maximum among all the values. If we let $u_{\W_B}$ to be \textit{the solution} of \eqref{eq:main} with zero boundary data, we should then expect that the solution $u$ of the problem above is actually given by the envelope
\[
u = \max_{\W_B\ss\W}u_{\W_B}.
\]
One of the reasons why we are not able to answer this is because \textbf{we do not even know if $u_{\W_B}$ is well defined for a general $\W_B\ss\W$} (or at least a dense set in some suitable topology).

On the other hand, if both players seek to minimize the expected exit time under the same rules, then we are actually considering an optimal control problem where the value function should satisfy the following gradient constrained problem
\[
\begin{cases}
\max(|Du|-1,\tfrac{1}{2}(-\D)u-1) = 0 \in \W,\\
u=0 \text{ on } \p\W.
\end{cases}
\]
This is the simplest model for the problem usually known as gradient constrained that was addressed in the previous section \cite{MR544887,MR529814,MR607553,MR693645,MR1001925,MR1104105,MR346345,MR2989443,MR2898887,MR3023063,MR3621845,MR3563780,MR3353792,MR3667699,MR4200759}. Once again, it would not be surprising that in this case $u=\min_{\W_B\ss\W}u_{\W_B}$. As far as we know, this question has not been addressed before in the literature either.

In either case, the well-posedness for the problems that determine $u_{\W_B}$ not only fulfills a theoretical inquiry. They can also be used to estimate how far from optimal is a given strategy, as they are bounds for the corresponding value function of interest.

\subsection{Organization of the paper}

In Section \ref{sec:preliminaries}, we provide precise definitions of viscosity solutions for equations with discontinuous dynamics. We revisit some classical results, such as stability and Perron's method. The existence of solutions, stated in Theorem \ref{thm:existence}, is a rather straightforward consequence of known results in the classical theory and does not require the comparison principle.

Section \ref{sec:strong} is devoted to show that the equations \eqref{eq:main} and \eqref{eq:main1.5} are equivalent. This result can be interpreted as the canonical transmission condition at the interface. Theorem \ref{thm:strong} is stated to a more general class of problems.

The main result of this work, the comparison principle, is proven in Section \ref{sec:comparison} for flat interfaces. As already announced, we require of inf/sup regularizations and some boundary regularity estimates proven in Section \ref{sec:bdry_reg}.

\subsection{Acknowledgments}

I would like to thank Ryan Hynd, Arturo Arellano, and Edgard Pimentel for their helpful feedback on this project. The author was supported by CONACyT-MEXICO grant A1-S-48577.

\section{Preliminaries}
\label{sec:preliminaries}

\subsection{Notation}

For a real number or a real-value function $\a$, we denote its positive and negative part as $\a_\pm := \max(\pm \a,0)$.

Given a point $x\in\R^n$ we use $x_1,x_2,\ldots,x_n \in \R$ to denote its coordinates. The notation $x'\in \R^{n-1}$ may be used to denote the first $(n-1)$ coordinates of $x$, i.e. $x' = (x_1,x_2,\ldots,x_{n-1})$ and $x = (x',x_n)$. Occasionally, we may also use $x' \in \R^n$ as a point which is actually in $\{x_n=0\}$. Sometimes $x_0$ may denote a fixed point in $\R^n$, or perhaps $\{x_k\}$ may be a sequence of points in $\R^n$. In such cases, the coordinates will be denoted by $(x_k)_1,(x_k)_2,\ldots,(x_k)_n$ and $x_k' =((x_k)_1,(x_k)_2,\ldots,(x_k)_{n-1})$.

The open ball in $\R^n$ of radius $r>0$ and center $x_0\in \R^n$ is denoted by $B_r(x_0)$ and we may omit the center when $x_0=0$. Whenever we talk about a ball in dimension $(n-1)$, we denote it by $B^{n-1}_r(x_0')$ or $B^{n-1}_r$.

We use $\R^{n\times n}_{\rm sym}$ to denote the space of $n\times n$ symmetric matrices. For $M_1,M_2\in \R^{n\times n}_{\rm sym}$ we say that $M_1\leq M_2$ iff $\xi\cdot M_1\xi \leq \xi\cdot M_2\xi$ for any $\xi\in \R^n\sm\{0\}$. The remaining inequalities ($\geq$, $<$, and $>$) are understood in a similar way.

We use the notation $L^\8$, $C$, $C^k$, and $C^{k,\a}$ to respectively denote the spaces of bounded functions, continuous functions, $k^{th}$-order continuously differentiable functions, and $k^{th}$-order continuously differentiable functions with derivatives of order $k$ being $\a$-H\"older continuous ($\alpha\in(0,1]$).

Given a set $\W\ss\R^n$, a relatively open subset $\G\ss\p\W$ is said to be uniformly Lipschitz regular iff for every $x_0\in \Gamma$ there exists some change of variables $\Phi:B_1\to U$ where $U=\Phi(B_1)$ is a neighborhood of $x_0=\Phi(0)$, and $\Phi$ is a bi-Lipchitz map, with a Lipschitz norm independent of $x_0$, such that
\begin{align*}
&\Phi(B_1\cap \{x_n<0\}) = U\cap (\R^n\sm \overline \W),\\
&\Phi(B_1\cap \{x_n>0\}) = U\cap \W,\\
&\Phi(B_1\cap \{x_n=0\}) = U\cap \Gamma.
\end{align*}
We will also consider the scenario where the maps are assumed to be $C^2$-regular diffeomorphism, in this case we say that $\G$ is $C^2$-regular.

A \textit{second-order operator} over $\W\ss\R^n$ is defined in terms of a function $H = H(M,p,z,x) \in C(\R^{n\times n}_{\rm sym}\times\R^n\times \R\times\W)$. For $u$ second-order differentiable at $x_0\in \W$ we compute
\[
Hu(x_0) := H(D^2u(x_0),Du(x_0),u(x_0),x_0).
\]
We may occasionally also refer to the function $H$ as the operator.

In the present article, we will say that an operator $H$ is \textit{degenerate elliptic} if for any pair $(M_1,p,z_1,x), (M_2,p,z_2,x) \in \R^{n\times n}_{\rm sym}\times\R^n\times\R\times\W$
\[
M_1\geq M_2, \quad z_1 \leq z_2 \qquad\Rightarrow\qquad H(M_1,p,z_1,x)\leq H(M_2,p,z_2,x).
\]
Our convention makes $H(M,z) = -\tr(AM)+\l z$ degenerate elliptic for any $A \in \R^{n\times n}_{sym}$ with $A\geq 0$, and $\l\geq 0$. The monotonicity with respect to $z$ is usually called properness in the literature, meanwhile the monotonicity with respect to the Hessian is the one referred to as degenerate ellipticity.

If $H=H(M,p,z)$ is independent of $x\in \W$ we say that the operator is \textit{translation invariant}.

We say the operator is \textit{convex} if for every $x\in\W$ the function $H(\cdot,\cdot,\cdot,x)$ is convex. It is instead \textit{quasi-convex} if for every $x\in\W$ and $\l\in \R$, the sub-level sets $\{H(\cdot,\cdot ,\cdot ,x)\leq \l\}$ are convex.

If $H=H(p,z,x)$ is independent of the matrix variable we say that it is a \textit{first-order operator} and
\[
Hu(x_0) := H(Du(x_0),u(x_0),x_0)
\]
can be evaluated for $u$ first-order differentiable at $x_0$.

If a first-order operator satisfies that for every $z,\l\in \R$, the set $\bigcup_{x\in \W} \{H(\cdot ,z,x)\leq \l\}$ is bounded, then we say that it has \textit{bounded sub-level sets}. Clearly the bound on the sets depends on $z$; however, as we will be considering bounded solutions we can also make the sub-level sets uniformly bounded in $z$. To be precise, let us consider that $|z|\leq M$, then $\bigcup_{(z,x)\in [-M,M]\times \W} \{H(\cdot ,z,x)\leq \l\}$ is bounded by the continuity of $H$. This property is usually a consequence of some sort of coercivity or super-linearity assumption on $H$.

A second-order operator is \textit{uniformly elliptic} if it is controlled by a family of elliptic linear operators with uniformly bounded coefficients (from above and away from zero). In order to do this it is convenient to introduce as well the \textit{extremal Pucci operators}. These are defined with respect of some interval $[\l,\L]\ss(0,\8)$ such that $\cM^\pm_{\l,\L}:\R^{n\times n}_{\rm sym}\to \R$ is given by
\begin{align*}
\cM^+_{\l,\L}(M) &:= \sup\{-\tr(AM) \ | \ A \in \R^{n\times n}_{\rm sym}, \, \l I\leq A\leq \L I\} = -\sum_{e\in \operatorname{eig}(M)} (\l e_+-\L e_-),\\
\cM^-_{\l,\L}(M) &:= \inf\{-\tr(AM) \ | \ A \in \R^{n\times n}_{\rm sym}, \, \l I\leq A\leq \L I\} = -\sum_{e\in \operatorname{eig}(M)} (\L e_+-\l e_-).
\end{align*}

We say that $H$ gives a \textit{uniformly elliptic operator} with respect to the interval $[\l,\L]\ss(0,\8)$ iff for every $(M_1,p_1,z_1,x),(M_2,p_2,z_2,x)\in \R^{n\times n}_{\rm sym}\times \R^n\times \R\times \W$, $M=M_2-M_1$, $p=p_2-p_1$, and $z=z_2-z_1$, we have that
\begin{align*}
\cM^-_{\l,\L}(M)-\L|p|-\L z_- \leq H(M_2,p_2,z_2,x) - H(M_1,p_1,z_1,x) &\leq \cM^+_{\l,\L}(M)+\L|p|+\L z_+.
\end{align*}

\subsection{Viscosity solutions}\label{sec:visc_sol}

Consider for $j\in\{1,\ldots,k\}$, degenerate elliptic operators $H_j$ defined over some subset $\W_j\ss\W$. In the following definition we give a notion of sub and super-solutions of the problem
\begin{align}\label{eq:main2}
\begin{cases}
H_1u = 0 \text{ in } \W_1,\\
\vdots\\
H_ku = 0 \text{ in } \W_k.
\end{cases}
\end{align}

We say that a function $\varphi:\W_\varphi\to \R$ \textit{touches} another function $u:\W_u\to \R$ from above at some $x_0\in \W_\varphi\cap \W_u$ iff
\[
u\leq \varphi \text{ in $\W_\varphi\cap \W_u$, with equality at $x_0$.}
\]
If additionally,
\[
u< \varphi \text{ in $\W_\varphi\cap \W_u\sm\{x_0\}$,}
\]
we say that the contact is \textit{strict}. Contact from below is defined similarly.

\begin{definition}
A function $u \in C(\W)$ is a viscosity sub-solution of \eqref{eq:main2} iff for every $\varphi \in C^2(B_r(x_0))$ that touches $u$ from above at $x_0 \in \W$, it holds that
\[
H_j\varphi(x_0) \leq 0 \text{ if } x_0 \in \W_j.
\]

A function $v \in C(\W)$ is a viscosity super-solution of \eqref{eq:main2} iff for every $\varphi \in C^2(B_r(x_0))$ that touches $v$ from below at $x_0 \in \W$, it holds that
\[
H_j\varphi(x_0) \geq 0 \text{ if } x_0 \in \W_j.
\]

Finally, $u \in C(\W)$ is a solution of \eqref{eq:main2} iff it is both a sub and super-solution of the respective problem.
\end{definition}

For sub-solutions, we could also say that the function satisfies the following inequalities in the viscosity sense
\[
\begin{cases}
H_1u \leq 0 \text{ in } \W_1,\\
\vdots\\
H_ku \leq 0 \text{ in } \W_k.
\end{cases}
\]
A similar notation is also used for super-solutions. Finally, if we ask for strict contact in the definitions above we obtain the exact same notion of solutions, a useful trick in a few proofs.

In the next definition we consider $\W_-, \Gamma, \W_+\ss\W\ss\R^n$, such that $\Gamma=\W\sm(\W_-\cup\W_+)$. The two main operators $H_-$ and $H_+$ are defined over $\W_-\cup\Gamma$ and $\W_+\cup\Gamma$ respectively. Our goal is to define the viscosity solutions for the following problem
\begin{align}\label{eq:main3}
\begin{cases}
H_-u = 0 \text{ in } \W_-,\\
H_+u = 0 \text{ in } \W_+,\\
\text{Either $H_-u$ or $H_+u = 0$ on $\Gamma$}.
\end{cases}
\end{align}
In this case the operators used for sub and super solutions are different over the set $\Gamma$.

\begin{definition}
A function $u \in C(\W)$ is a viscosity sub-solution of \eqref{eq:main3} iff it is a viscosity sub-solution of
\[
\begin{cases}
H_-u \leq 0 \text{ in } \W_-,\\
H_+u \leq 0 \text{ in } \W_+,\\
\min(H_-u,H_+u) \leq 0 \text{ on } \Gamma.
\end{cases}
\]

A function $u \in C(\W)$ is a viscosity super-solution of \eqref{eq:main3} iff it is a viscosity super-solution of
\[
\begin{cases}
H_-u \geq 0 \text{ in } \W_-,\\
H_+u \geq 0 \text{ in } \W_+,\\
\max(H_-u,H_+u) \geq 0 \text{ on } \Gamma.
\end{cases}
\]

Finally, $u \in C(\W)$ is a solution of \eqref{eq:main3} iff it is both a sub and super-solution of the respective problem.
\end{definition}

We leave as an exercise to check that the examples discussed in Section \ref{sec:exis_one_d} are viscosity solutions. Let us give a further example of a viscosity solution in an annular domain $\W=B_R\sm B_r\ss\R^n$ with $n\geq 2$ and $0<r<R$, taken from \cite{MR4249793}. The operators under consideration will be as in the introduction $H_- = |p|-1$ and $H_+=\tfrac{1}{2}(-\D)-1$.

Let $\r\in (r,R)$, $\W_-=B_R\sm B_\r$, $\W_+=B_\r\sm B_r$ and
\[
u(x) := \begin{cases}
    R-|x| \text{ in } \overline{\W_-},\\
    A+B\Phi(|x|)-|x|^2/n \text{ in } \overline{\W}\sm \overline{\W_-}.
\end{cases}
\]
The function $\Phi$ is a multiple of the fundamental solution for the Laplacian
\[
\Phi(s)=\begin{cases}
    -\ln s \text{ if } n=2,\\
    s^{2-n} \text{ if } n\geq 2.
\end{cases}
\]
The constants $\r$, $A$, and $B$ are chosen such that $u$ is continuous in $\W$, and attains the boundary value $u=0$ on $\p\W$, and $|Du|\leq 1$ on both sides of $\G$. The specific equations one should consider are the following and have infinite solutions
\begin{align*}
    &A+B\Phi(r)-r^2/n=0,\\
    &A+B\Phi(\r)-\r^2/n=R-\r,\\
    &B\Phi'(\r)-2\r/n\leq 1.
\end{align*}
See Figure \ref{fig:2}.

Under these requirements it is rather easy to check that $u$ is a viscosity solution of \eqref{eq:main} which is Lipschitz continuous but not $C^1$. One may wonder if solutions will be semi-concave, as for the eikonal equation. However, the one-dimensional examples in Section \ref{sec:exis_one_d} show that this is not necessarily the case.

\begin{figure}
    \centering
    \includegraphics[width=16cm]{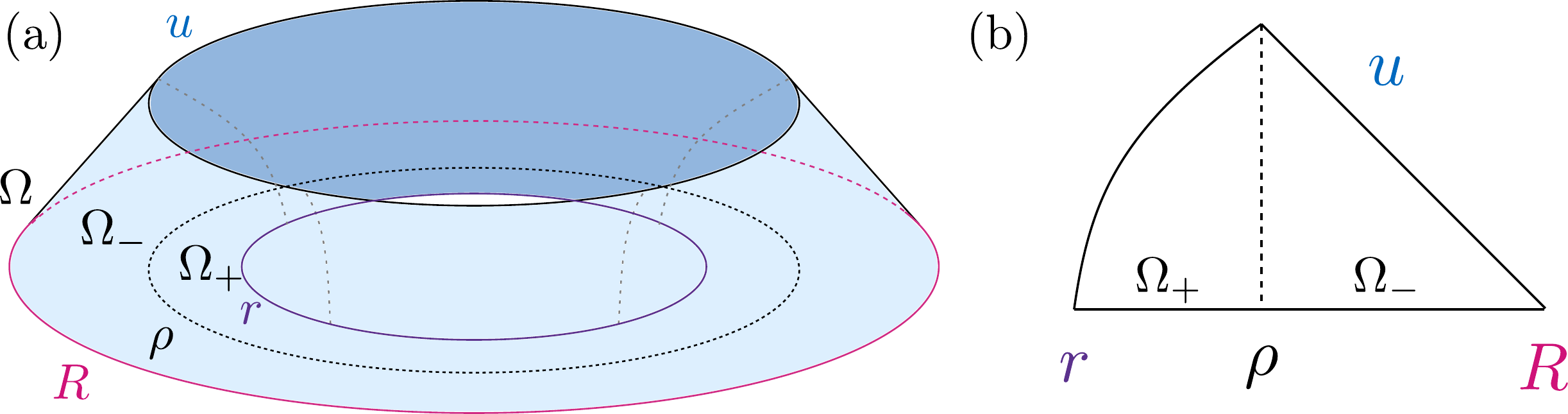}
    \caption{(a) Graph of a viscosity solution of \eqref{eq:main} in an annular domain. (b) Lateral view.}
    \label{fig:2}
\end{figure}

\subsubsection{Stability and Perron's method}
The flexibility of the problem \eqref{eq:main3} over $\Gamma$ has the benefit of providing the following stability property.

\begin{lemma}\label{lem:sta}
Let $\W_-,\W_+\ss \W\ss\R^n$ be all open sets, and let $\Gamma := \W \sm (\W_-\cup\W_+)$. Let $H_\pm$ be degenerate elliptic operators over $\W_\pm\cup\Gamma$ respectively, and let $\{u_k\} \ss C(\W)$ be a sequence of viscosity sub-solutions for \eqref{eq:main3} that converges uniformly to some $u\in C(\W)$. Then $u$ is also a viscosity sub-solution for \eqref{eq:main3}.
\end{lemma}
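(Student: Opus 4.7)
The plan is to mimic the classical stability argument for viscosity sub-solutions, with a case analysis at the interface to handle the disjunction in the sub-solution condition. Let $\varphi \in C^2(B_r(x_0))$ touch $u$ from above at some $x_0 \in \W$. By the remark after the definition of viscosity solutions, I may assume the contact is strict; if not, I replace $\varphi$ by $\varphi(x) + |x-x_0|^2$, which preserves strict contact at $x_0$, preserves the value of $\varphi$, $D\varphi$, and only increases $D^2\varphi$ at $x_0$, so by degenerate ellipticity the conclusions $H_\pm \varphi(x_0) \leq 0$ are only strengthened.

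Next, using the strict contact together with uniform convergence $u_k \to u$, I pick a decreasing sequence of radii $r_k \to 0$ and observe that $\varphi - u_k$ attains its minimum on $\overline{B_{r_k}(x_0)}$ at some interior point $x_k$ (since $\min_{\partial B_{r_k}(x_0)}(\varphi - u) > 0$ by strict contact, and $u_k \to u$ uniformly forces the minimum to be strictly interior for large $k$). After subtracting a constant from $\varphi$, this means $\varphi$ touches $u_k$ from above at $x_k$, and $x_k \to x_0$.

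Now I split into cases based on $x_0$. If $x_0 \in \W_-$, then $x_k \in \W_-$ for large $k$ since $\W_-$ is open, so the sub-solution property for $u_k$ gives $H_-\varphi(x_k) \leq 0$, and continuity of $H_-$ yields $H_-\varphi(x_0) \leq 0$. The case $x_0 \in \W_+$ is symmetric. The interesting case is $x_0 \in \Gamma$. Here, after passing to a subsequence, one of the following occurs: (a) $x_k \in \W_-$ for all $k$, giving $H_-\varphi(x_k) \leq 0$ and in the limit $H_-\varphi(x_0) \leq 0$; (b) $x_k \in \W_+$ for all $k$, giving analogously $H_+\varphi(x_0) \leq 0$; (c) $x_k \in \Gamma$ for all $k$, which gives $\min(H_-\varphi(x_k), H_+\varphi(x_k)) \leq 0$, so after another subsequence one of the two operators is nonpositive at every $x_k$, and continuity hands the same inequality at $x_0$. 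In each of (a)--(c) we obtain $\min(H_-\varphi(x_0), H_+\varphi(x_0)) \leq 0$, which is exactly the sub-solution condition on $\Gamma$.

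The only mild subtlety is ensuring that the approximating contact points $x_k$ really land in a definite region often enough to extract a convergent subsequence with a fixed membership pattern; this is automatic by a pigeonhole argument since $\{x_k\}$ must have infinitely many terms in at least one of $\W_-$, $\W_+$, or $\Gamma$. There is no obstacle from the disjunction itself because the sub-solution alternative $\min(H_-,H_+) \leq 0$ is preserved under continuous limits of closed conditions. An identical argument (with max in place of min and touching from below) would give the analogous super-solution stability, although that is not what is claimed here.
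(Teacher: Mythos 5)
Your proposal is correct and follows essentially the same route as the paper's proof: reduce to strict contact, use uniform convergence to produce approximating touching points $x_k\to x_0$ for $u_k$, and conclude by a pigeonhole plus continuity argument, split by the region containing $x_0$. One caution about the side-justification you give for the strict-contact reduction: replacing $\varphi$ by $\tilde\varphi=\varphi+|x-x_0|^2$ increases $D^2\tilde\varphi(x_0)$, and degenerate ellipticity then gives $H_\pm\tilde\varphi(x_0)\leq H_\pm\varphi(x_0)$, so establishing $H_\pm\tilde\varphi(x_0)\leq 0$ is a \emph{weaker} statement than $H_\pm\varphi(x_0)\leq 0$, not a stronger one; the correct way to recover the full sub-solution property is to add $\epsilon|x-x_0|^2$ and let $\epsilon\to 0$, using the continuity of $H_\pm$. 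Since you also appeal directly to the paper's remark that strict contact yields the same notion of solution, this slip does not undermine the argument, but the heuristic as written is inverted.
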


\begin{proof}
Let $\varphi \in C^2(B_r(x_0))$ be a test function that strictly touches $u$ from above at $x_0\in\W$. Up to a vertical translation and a sub-sequence, $\varphi+m_{k}$ touches $u_{k}$ from above at $x_{k}\to x_0$, and $m_k\to0$, due to the uniform convergence.

If $x_0\in \W_+$ then we can assume without loss of generality that $\{x_{k}\} \ss \W_+$ as well. This means that $H_+(\varphi+m_k)(x_k) \leq 0$ and the desired inequality for $H_+\varphi(x_0)$ follows by continuity. The same reasoning applies if $x_0\in \W_-$ instead.

If $x_0\in \Gamma$, then either $H_-(\varphi+m_k)(x_k)\leq 0$ or $H_+(\varphi+m_k)(x_k)\leq 0$ must be true an infinite number of times. In either case we conclude once again by continuity that $\min(H_-\varphi(x_0),H_+\varphi(x_0))\leq 0$.
\end{proof}

The following consequence of the stability given by the previous lemma is the first step towards the construction of the viscosity solution of the boundary value problem by Perron's method.

\begin{corollary}\label{cor:perron}
Let $\W_\pm\ss \W\ss\R^n$ be all open sets, and let $\Gamma := \W \sm (\W_-\cup\W_+)$. Let $H_\pm$ be degenerate elliptic operators over $\W_\pm\cup\Gamma$ respectively. Let $\mathcal S$ be a set of viscosity sub-solutions for the problem \eqref{eq:main3} which is equicontinuous and bounded. Then the upper envelope $u_\mathcal S(x) := \sup_{u\in \mathcal S}u(x)$ is also a sub-solution of \eqref{eq:main3}.
\end{corollary}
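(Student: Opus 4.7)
The plan is to realize the upper envelope $u_\mathcal{S}$ locally as a uniform limit of members of $\mathcal{S}$ and then invoke Lemma \ref{lem:sta}. First I would note that $u_\mathcal{S}\in C(\W)$: since $\mathcal{S}$ is equicontinuous and uniformly bounded, a common modulus of continuity and a uniform bound transfer directly to the pointwise supremum, so $u_\mathcal{S}$ is continuous (and in fact equicontinuous with the same modulus).

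Given a test function $\varphi \in C^2(B_r(x_0))$ touching $u_\mathcal{S}$ from above at some $x_0\in\W$, I would shrink $r>0$ if necessary so that $\overline{B_r(x_0)}\subset \W$. Then I would pick $u_k \in \mathcal{S}$ with $u_k(x_0) \to u_\mathcal{S}(x_0)$. The Arzel\`a-Ascoli theorem, applied to $\{u_k\}$ on the compact set $\overline{B_r(x_0)}$ via the equicontinuity and uniform boundedness of $\mathcal{S}$, produces a subsequence (still denoted $\{u_k\}$) converging uniformly on $\overline{B_r(x_0)}$ to some $u^* \in C(\overline{B_r(x_0)})$. Because $u_k \leq u_\mathcal{S}$ everywhere and $u_k(x_0) \to u_\mathcal{S}(x_0)$, the limit satisfies $u^* \leq u_\mathcal{S}$ pointwise and $u^*(x_0)=u_\mathcal{S}(x_0)$.

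Next I would restrict \eqref{eq:main3} to the open set $B_r(x_0)$ (by intersecting $\W_\pm$ and $\Gamma$ with it) and apply Lemma \ref{lem:sta} to conclude that the uniform limit $u^*$ is a viscosity sub-solution of \eqref{eq:main3} on $B_r(x_0)$. Since $\varphi \geq u_\mathcal{S} \geq u^*$ on $B_r(x_0)$ with equality at $x_0$, the test function $\varphi$ also touches $u^*$ from above at $x_0$; the sub-solution definition for $u^*$ applied with the test function $\varphi$ then yields the required inequality in each of the three cases $x_0\in \W_-$, $x_0\in \W_+$, or $x_0\in \Gamma$. Since $\varphi$ and $x_0$ were arbitrary, this is precisely what is needed to conclude that $u_\mathcal{S}$ is a viscosity sub-solution of \eqref{eq:main3}.

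The only delicate step is upgrading the pointwise relation $u_k(x_0)\to u_\mathcal{S}(x_0)$ to uniform convergence on a neighborhood of $x_0$; this is exactly what the equicontinuity and uniform boundedness hypotheses buy through Arzel\`a-Ascoli, and without some compactness assumption on $\mathcal{S}$ the statement would generally fail. Once the uniform convergence is in place, the corollary is essentially a direct packaging of the stability Lemma \ref{lem:sta}.
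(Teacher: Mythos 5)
Your proposal is correct and reflects exactly the route the paper has in mind: the paper supplies no explicit proof of Corollary \ref{cor:perron} but presents it as a consequence of Lemma \ref{lem:sta}, and your Arzel\`a--Ascoli step is precisely the bridge that converts the pointwise supremum into a locally uniform limit (which is why the equicontinuity and boundedness hypotheses appear), after which the stability lemma applies and $\varphi$ passes from touching $u_{\mathcal S}$ to touching $u^*$ at $x_0$.
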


In the case where $\mathcal S$ is finite, the equicontinuity and boundedness hypotheses are superfluous and the result is known as the lattice property: The maximum of two sub-solutions is a sub-solution.

The following existence result does not require the comparison principle to hold. However, it assumes as in the previous lemma that the upper envelope of the family of sub-solutions of the Dirichlet problem turns out to be continuous up to the boundary.

\begin{lemma}[Perron's Solution]
\label{cor:perron2}
Let $\W_\pm\ss \W\ss\R^n$ be all open sets such that $\p\W\neq \emptyset$, and let $\Gamma := \W \sm (\W_-\cup\W_+)$. Let $H_\pm$ be degenerate elliptic operators over $\W_\pm\cup\Gamma$ respectively. Given $g \in C(\partial\W)$, define the set $\mathcal S_g$ such that
\[
\mathcal S_g := \{u \in C(\overline\W)\ | \ \text{$u$ is a viscosity sub-solution of \eqref{eq:main3} and $u = g$ in $\p\W$}\}.
\]
If there exists an equicontinuous and bounded subset $\mathcal S\ss\mathcal S_g$ such that
\[
u_{g}(x) := \sup_{u\in\mathcal{S}_g}u(x) = \sup_{u\in\mathcal{S}}u(x),
\]
then $u_g$ is a viscosity solution of \eqref{eq:main3} on $\W$ taking the boundary value $g$ continuously on $\partial\W$.
\end{lemma}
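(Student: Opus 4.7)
The plan is a standard Perron argument adapted to the interface structure of \eqref{eq:main3}, organized in three stages.

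First, since $\mathcal S$ is equicontinuous and uniformly bounded on $\overline\W$ with every member agreeing with $g$ on $\p\W$, the pointwise supremum $u_g$ inherits the common modulus of continuity; hence $u_g \in C(\overline\W)$ and $u_g = g$ on $\p\W$. Corollary \ref{cor:perron} applied to $\mathcal S$ directly gives that $u_g$ is already a viscosity sub-solution of \eqref{eq:main3}. This disposes of continuity and the sub-solution half at once.

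Next, I would establish the super-solution property by contradiction. Suppose there exist $x_0 \in \W$ and a test function $\varphi \in C^2(B_r(x_0))$ strictly touching $u_g$ from below at $x_0$ that violates the super-solution inequality at $x_0$: depending on which of the three regions contains $x_0$, either $H_-\varphi(x_0) < 0$, or $H_+\varphi(x_0) < 0$, or \textbf{both} $H_\pm \varphi(x_0) < 0$ if $x_0 \in \Gamma$ (this last case being exactly the negation of $\max(H_-\varphi, H_+\varphi)(x_0) \geq 0$). By continuity of $H_\pm$ and of the derivatives of $\varphi$, these strict inequalities persist on a small ball $B_\rho(x_0) \ss B_r(x_0)$; for small $\delta > 0$ the perturbation
\[
\tilde\varphi(x) := \varphi(x) + \delta\bigl(1 - |x - x_0|^2/\rho^2\bigr)
\]
remains a strict sub-solution for whichever of $H_\pm$ applies on $B_\rho(x_0)$, satisfies $\tilde\varphi(x_0) > u_g(x_0)$, and coincides with $\varphi$ on $\p B_\rho(x_0)$, where $\varphi < u_g$ by the strict contact assumption.

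Third, perform the standard gluing: define $w := \max(u_g, \tilde\varphi)$ on $B_\rho(x_0)$ and $w := u_g$ elsewhere. Continuity of $w$ on $\overline\W$ follows from $\tilde\varphi \leq u_g$ near $\p B_\rho(x_0)$, and the lattice property for sub-solutions combined with the sub-solution property of $u_g$ ensures that $w$ is again a viscosity sub-solution of \eqref{eq:main3}. Since $w = g$ on $\p\W$, we have $w \in \mathcal S_g$; yet $w(x_0) \geq \tilde\varphi(x_0) > u_g(x_0)$, contradicting the definition of $u_g$ as the pointwise supremum over $\mathcal S_g$.

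The main subtlety is the case $x_0 \in \Gamma$: the weak formulation on $\Gamma$ demands strict negativity of \emph{both} $H_\pm\varphi(x_0)$ simultaneously, which is precisely what the negation of $\max(H_-\varphi, H_+\varphi) \geq 0$ delivers. A single bump $\tilde\varphi$ then doubles as a strict sub-solution on both sides of the interface, paralleling the way Lemma \ref{lem:sta} exploited this same flexibility. Beyond this, the routine reasoning of Perron's method applies and I do not anticipate additional obstacles.
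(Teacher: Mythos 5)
Your argument is correct and follows essentially the same route as the paper's proof: Corollary \ref{cor:perron} for the sub-solution half, then a Perron bump-and-glue contradiction for the super-solution half, with the case $x_0 \in \Gamma$ handled by noting that the negation of $\max(H_-\varphi,H_+\varphi)\geq 0$ makes the bump a simultaneous strict sub-solution for both operators. The only difference is cosmetic: the paper uses the constant perturbation $\varphi + \delta$ with $\delta$ small enough that $\varphi+\delta$ stays below $u_g$ on $\p B_r(x_0)$ (by the strict contact), while you use the quadratic bump $\varphi + \delta\bigl(1-|x-x_0|^2/\rho^2\bigr)$, which vanishes identically on $\p B_\rho(x_0)$ and thus makes the gluing near the boundary immediate; either version works, since the bump's derivatives differ from $\varphi$'s by $O(\delta)$ and the strict inequality $\max(H_\pm\varphi)<0$ is preserved by continuity on the compact closed ball.
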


Notice that if $\W$ is bounded, as we will set in the next section, the boundedness condition in $\mathcal S$ follows from the equicontinuity assumption.

\begin{proof}
By Corollary \ref{cor:perron} we know that $u_g$ is indeed a sub-solution. In this proof we will just check that $u_g$ is also a super-solution.

Assume by contradiction that $\varphi \in C^2(\overline{B_r(x_0)})$ strictly touches $u_g$ from below at some $x_0\in \Gamma$ over $B_r(x_0)\ss\W$, but nonetheless $\max(H_-\varphi,H_+\varphi)<0$ over $B_r(x_0)$. Although we focus on the case where the contact occurs over the interface, the following argument can also be adapted to the case where $x_0$ belongs to either $\W_-$ or $\W_+$.

Letting $\d\in (0,\max_{\p B_r(x_0)}(u_g-\varphi))$, so that we still have $\max(H_-(\varphi+\d),H_+(\varphi+\d))<0$ over $B_r(x_0)$, we construct
\[
v := \begin{cases}
    \max(u_g,\varphi+\d) \text{ in } B_r(x_0),\\
    u_g \text{ in } \overline \W \sm B_r(x_0).
\end{cases}
\]
This is a function in $\mathcal S_g$ that contradicts the maximality of $u_g$.
\end{proof}

\begin{remark}
One can easily extend the notions of viscosity solutions to semi-continuous functions. In this case there are a similar stability properties under half-relaxed limits or gamma-convergence. These will not be used in our work as the regularity of the operators will allow us to assume that the solutions are always continuous.
\end{remark}

\subsubsection{Compactness for sub-solutions}\label{comp_sub_sol}

Just as sub-solutions of the eikonal equation are Lipschitz continuous, we can show that viscosity sub-solutions of first-order, degenerate elliptic operators have a modulus of continuity that depends on the zero sub-level set of the operator, which we assume to be bounded.

\begin{lemma}\label{lem:lips1}
    Let $\W\ss\R^n$ be an open bounded set with uniformly Lipschitz regular boundary. Let $H\in C(\R^n\times\R\times \W)$ be first-order, degenerate elliptic operator with bounded sub-level sets. Then any bounded sub-solution $u \in C(\W)$ of $Hu\leq 0$ in $B_r(x_0)$ is Lipschitz continuous.
\end{lemma}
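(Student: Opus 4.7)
My approach would be the classical doubling of variables argument for coercive first-order equations, with the bounded sub-level sets hypothesis playing the role of coercivity. The key idea is that any ``gradient'' of a test function that touches $u$ from above must lie in a sub-level set of $H$, and this set is uniformly bounded.

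\textbf{Step 1: Selection of the Lipschitz constant.} Since $u$ is bounded, set $M := \|u\|_{L^\8(\W)}$. The assumption on $H$ gives that $\bigcup_{(z,x)\in[-M,M]\times\W}\{p\ :\ H(p,z,x)\leq 0\}$ is bounded, so there exists $L>0$ such that $H(p,z,x)>0$ whenever $|p|\geq L$, $|z|\leq M$ and $x\in\W$. This $L$ is my candidate Lipschitz constant.

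\textbf{Step 2: Doubling of variables.} Fix $L'>L$ and a subdomain compactly contained in the region where the sub-solution property holds. To prove $u(x)-u(y)\leq L'|x-y|$ on this subdomain, I would argue by contradiction and consider the penalized functional
\[
\Phi_\e(x,y) := u(x)-u(y)-L'|x-y|-\e\,\psi(x,y),
\]
where $\psi(x,y)$ is a smooth weight that blows up as $x$ or $y$ approaches the boundary of the subdomain, so that the positive supremum of $\Phi_\e$ is attained at an interior pair $(x_*,y_*)$. The diagonal $x_*=y_*$ is ruled out because there $\Phi_\e\leq -\e\psi<0$, while we are assuming the supremum is positive.

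\textbf{Step 3: Test function and contradiction.} Since $x_*\neq y_*$, the function
\[
\varphi(x) := u(y_*)+L'|x-y_*|+\e\,\psi(x,y_*)
\]
is of class $C^2$ in a neighborhood of $x_*$ and touches $u$ from above there. The viscosity sub-solution property applied to $\varphi$ yields $H(D\varphi(x_*),u(x_*),x_*)\leq 0$. However,
\[
|D\varphi(x_*)|\geq L'-\e\,\|D_x\psi(\cdot,y_*)\|_{L^\8}>L
\]
for $\e$ small enough, contradicting the choice of $L$ in Step 1. Letting $L'\downarrow L$ then gives the desired Lipschitz estimate on the subdomain with constant $L$ depending only on $M$ and $H$.

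\textbf{Main obstacle.} The subtle point is globalizing this local bound up to $\p\W$: the penalty $\psi$ confines the supremum to an interior subdomain, so the naive argument gives only interior Lipschitz continuity. The uniform Lipschitz regularity of $\p\W$ should enter precisely here, presumably to construct, around each boundary point, a bi-Lipschitz chart in which one can still find a $C^2$ test function (a flattened half-cone plus a boundary barrier) that realizes a gradient of norm slightly larger than $L$. I expect this boundary step, rather than the interior doubling, to be the technically delicate part; for operators whose coercivity degrades near $\p\W$ one would need the regularity of $\p\W$ to ensure that the cone $L'|x-y|$ can be tested without $x_*$ or $y_*$ escaping through a cusp.
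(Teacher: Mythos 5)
Your Step~1 is exactly the observation the paper makes: since $\bigcup_{|z|\le M,\,x\in\W}\{p:H(p,z,x)\le 0\}\subset B_R$ for some $R$, the function $u$ is also a viscosity sub-solution of $|Du|-R\le 0$ in $\W$, and the problem reduces to the classical Lipschitz estimate for eikonal sub-solutions. From that point on, however, your argument has a genuine gap. In Step~3 you need $\e\,|D_x\psi(x_*,y_*)|<L'-L$, which you state as $\e\|D_x\psi(\cdot,y_*)\|_{L^\8}$ being small; but a weight $\psi$ that blows up near $\p V$ has $\|D_x\psi\|_{L^\8}=+\8$, so that inequality is vacuous. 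Reading it instead as a bound at the maximizer does not rescue the argument: the only control on $(x_*,y_*)$ coming from the positivity of $\sup\Phi_\e$ is $\e\,\psi(x_*,y_*)\le 2M$, so $\psi(x_*,y_*)$ may grow like $2M/\e$ and $x_*$ drifts toward $\p V$ as $\e\to 0^+$. For a penalty of the form $\psi(x,y)=f(d(x))+f(d(y))$ with $d=\operatorname{dist}(\cdot,\p V)$, writing $g=f^{-1}$ one finds that $\e\,|D_x\psi(x_*,y_*)|\to 0$ would force $t|g'(t)|\to\8$ as $t\to\8$ while $g(t)\to 0$, which is impossible since it implies $\int^\8|g'|=\8$. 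So no choice of penalty can make your Step~3 close as written.

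The route the paper takes (via Bardi--Capuzzo Dolcetta, Section~IV.3.1) is a cone comparison, not a doubling of variables, and it avoids this trap because its correction term stays \emph{bounded}. For $y_0$ with $B_{2\rho}(y_0)\subset\W$, one tests with $\varphi(z):=u(y_0)+R|z-y_0|+(2M/\rho^2)|z-y_0|^2$: the quadratic term does not push the maximum inward by blowing up but simply makes $\varphi\ge u$ on $\p B_{2\rho}(y_0)$, so a positive maximum of $u-\varphi$ must be interior and at a point $x_*\ne y_0$, where $|D\varphi(x_*)|=R+(4M/\rho^2)|x_*-y_0|>R$, contradicting the sub-solution property. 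Subdividing the segment $[y_0,x]$ into $N$ equal pieces and summing the resulting bounds, then letting $N\to\8$, removes the quadratic term and yields $u(x)-u(y_0)\le R|x-y_0|$ on convex compact subsets. The passage to a general $\W$ with uniformly Lipschitz boundary is then a routine chaining along polygonal paths in $\W$ whose length is comparable to $|x-y|$, which is exactly what the Lipschitz regularity of $\p\W$ guarantees. That covering step, which you single out as the ``technically delicate part,'' is actually the easy part; the missing idea is replacing the blowing-up penalization with the bounded cone-plus-paraboloid test function in the interior estimate.
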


\begin{proof}
    Follows because $u$ is also a sub-solution of the eikonal equation $|Du|-R \leq 0$ in $\W$ for some radius $R>0$ sufficiently large such that $\bigcup_{(z,x)\in [-M,M]\times\W}\{H(\cdot,z,x)\leq0\}\ss B_R$ with $M:=\|u\|_{L^\8(\W)}$. This observation allows to show first that $[u]_{C^{0,1}(\W)}\leq R$ if $\W$ is convex. In general, the Lipschitz assumption on $\p\W$ suffices to extend the Lipschitz regularity to the whole domain by a covering argument. A detailed discussion can be found in \cite[Section IV.3.1]{MR1484411}.
\end{proof}

In the following lemma, we consider sub-solutions of \eqref{eq:main3} with continuous boundary data on $\W$. We then perform a replacement over $\W_+$ using the solution of the problem $H_+u = 0$ in $\W_+$. A sufficient condition for this construction to be well-defined is that $H_+$ is uniformly elliptic and $\W_+$ is a bounded set with uniformly Lipschitz regular boundary (also an exterior cone condition would be enough). This ensures the existence of barriers controlling the behavior at the boundary, as shown in \cite{MR221087}. The result is a continuous function on $\overline \W$ with a modulus of continuity depending on the problem's data.

\begin{lemma}\label{lem:lift}
    Let $\W_+\ss \W\ss\R^n$ be open bounded sets, $\W_-= \W\sm\overline{\W_+}$, and $\Gamma=\W\sm(\W_+\cup\W_-)=\p\W_\pm\cap\W$. Assume that $\W_\pm$ have uniformly Lipschitz regular boundaries. Consider $H_\pm$ degenerate elliptic operators defined over $\W_\pm\cup\Gamma$ respectively, such that $H_-$ is a first-order operator with bounded sub-level sets, while $H_+$ is a uniformly elliptic second-order operator. Let $g \in C(\p\W)$ be the boundary data of the problem.
    
    Given $u \in C(\overline \W)$, a bounded sub-solution of \eqref{eq:main3} such that $u=g$ over $\p\W$, consider $\bar u \in C(\overline \W)$ to be the solution of the Dirichlet boundary value problem
    \[
    \begin{cases}
        H_+\bar u = 0 \text{ in } \W_+,\\
        \bar u = u \text{ on } \W\sm \W_+.
    \end{cases}
    \]
    Then $\bar u$ is well defined and its modulus of continuity depends only on $\W$, $\W_\pm$, $H_\pm$, and $g$. Moreover, $\bar u$ is a sub-solution of \eqref{eq:main3}.
\end{lemma}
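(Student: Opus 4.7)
The plan is to obtain $\bar u$ by solving the uniformly elliptic Dirichlet problem in $\W_+$ with boundary datum $u|_{\partial \W_+}$, extending by $u$ on $\overline{\W}\setminus\W_+$, and verifying the required properties by combining classical Perron/barrier theory with an elementary comparison argument. I anticipate four steps: \textbf{(a)} a uniform modulus of continuity for the trace $u|_{\partial \W_+}$, \textbf{(b)} solvability of the interior Dirichlet problem with a controlled modulus, \textbf{(c)} $\bar u \geq u$ via comparison, and \textbf{(d)} the sub-solution property across $\Gamma$.

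For \textbf{(a)}, I first bound $\|u\|_{L^\infty(\overline{\W})}\leq C(\W,\W_\pm,H_\pm,g)$ by comparing the sub-solution $u$ against an explicit smooth function of the form $g + M|x|^2$ for $M$ large, which satisfies $H_\pm v \geq 0$ simultaneously; a touching at an interior maximum would then contradict the sub-solution inequality in $\W_\pm$ or on $\Gamma$. Then Lemma \ref{lem:lips1}, applied to $u|_{\W_-}$ as a sub-solution of $H_- u \leq 0$ on the bounded, uniformly Lipschitz domain $\W_-$, yields a Lipschitz estimate on $\overline{\W_-}$ depending only on $\|u\|_\infty$, $H_-$, and $\W_-$. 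The trace $u|_{\partial \W_+}$ decomposes as $u|_\Gamma$ (Lipschitz by the above) together with $g|_{\partial \W\cap\partial \W_+}$, and so inherits a modulus of continuity controlled by $g$, $H_-$, and the geometry.

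For \textbf{(b)}, the uniformly elliptic $H_+$ on $\W_+$ with uniformly Lipschitz boundary admits Perron's method together with barriers at every boundary point (see \cite{MR221087}), yielding the unique $\bar u \in C(\overline{\W_+})$ with $\bar u = u$ on $\partial \W_+$ and a modulus of continuity determined by the boundary modulus, $H_+$, and $\W_+$. Extending by $\bar u := u$ on $\overline{\W}\setminus\W_+$ produces a function in $C(\overline{\W})$ whose modulus depends only on the listed data, since the two definitions agree on $\partial \W_+$. For \textbf{(c)}, the comparison principle for the uniformly elliptic $H_+$ applied to the sub-solution $u$ and the classical solution $\bar u$ with equal boundary values on $\partial \W_+$ gives $u \leq \bar u$ in $\W_+$; outside we have equality by construction, so $u \leq \bar u$ on $\overline{\W}$ and $u = \bar u$ on $\overline{\W}\setminus \W_+$.

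For \textbf{(d)}, in $\W_+$ the sub-solution inequality is immediate since $H_+\bar u = 0$, and in $\W_-$ it is inherited from $u$. On $\Gamma$, if $\varphi\in C^2(B_r(x_0))$ touches $\bar u$ from above at $x_0\in\Gamma$, then $u \leq \bar u$ on $B_r(x_0)$ together with $u(x_0) = \bar u(x_0)$ (since $x_0\in\Gamma\subset \overline{\W}\setminus\W_+$ forces $\bar u(x_0) = u(x_0)$) implies $\varphi$ touches $u$ from above at $x_0$, so the interface sub-solution condition for $u$ yields $\min(H_-\varphi(x_0), H_+\varphi(x_0))\leq 0$. I expect the main obstacle to lie in \textbf{(a)}--\textbf{(b)}: producing a modulus of continuity for $\bar u$ that depends only on the prescribed data and not on the particular sub-solution $u$, which requires carefully combining the Lipschitz bound from Lemma \ref{lem:lips1} with the Lipschitz-domain boundary barrier estimates for the uniformly elliptic $H_+$.
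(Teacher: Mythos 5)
Your overall strategy matches the paper's: Lipschitz regularity of the trace via Lemma \ref{lem:lips1}, well-posedness of the interior Dirichlet problem for $H_+$ by Perron's method and the barriers of \cite{MR221087}, and the test-function transfer at $\Gamma$ using $u\leq \bar u$ with equality at $\Gamma$. Steps (b), (c), and (d) are correct and are exactly how the paper argues; in particular, (d) is the same key observation the paper makes, with your (c) supplying the inequality $u\leq\bar u$ that makes it work.

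Step (a), however, contains a genuine sign error. For a degenerate elliptic $H_+$, increasing the Hessian \emph{decreases} the operator: since $D^2\bigl(g+M|x|^2\bigr)=D^2 g+2MI\geq 0$ in the relevant sense, a convex barrier of that form satisfies $H_+v\leq 0$ for $M$ large (it is a \emph{sub}-solution, like $u$ itself), not $H_+v\geq 0$. Comparing one sub-solution against another yields no upper bound, and the contradiction you invoke at an interior maximum of $u-v$ does not materialize. To produce a genuine super-solution on the elliptic side you would need a concave barrier (e.g.\ of the form $\max_{\p\W}g+M(R^2-|x|^2)$), and even then the eikonal condition $H_-v\geq 0$ fails where the gradient of such a barrier is small, so the barrier must be designed more carefully (e.g.\ by shifting the vertex outside $\overline\W$). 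Note also that this step does not appear in the paper's proof, which takes the boundedness of $u$ as part of the hypothesis; your instinct that a uniform $L^\8$ bound is needed for the modulus of $\bar u$ to depend only on the stated data is reasonable, but as written the construction does not deliver it.
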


We sketch the main arguments in the proof which combines some of the fundamental constructions and estimates for uniformly elliptic equations developed by several authors.

\begin{proof}
By Lemma \ref{lem:lips1} we get that $u$ is Lipschitz on $\W\sm \W_+$. From this and the existence of barriers provided in \cite{MR221087} we obtain the existence of viscosity solution by Perron's method \cite[Section 4]{MR1118699} and \cite{MR894587}. In other words, $\bar u$ is well defined.

The uniform modulus of continuity for the solution is a consequence of the interior Hölder estimates for uniformly elliptic equations \cite{MR1005611,MR1048584}, and the existence of barriers with uniform modulus, once again found in the explicit construction given in \cite{MR221087}. The procedure for combining the modulus of continuity at the boundary with the interior regularity estimate is standard. See for example, the end of Chapter 4 in \cite{MR1351007}.

To see that $\bar u$ is a sub-solution, we just have to consider the situation in which a test function touches $\bar u$ from above over the interface, being the other two cases immediate by construction. In this scenario, the test function also touches the original sub-solution $u$, and the desired inequality follows promptly.
\end{proof}

By combining the previous result with Perron's method stated in Lemma \ref{cor:perron2} we recover the existence of solutions to the Dirichlet problem.

\begin{theorem}\label{thm:existence}
    Let $\W_+\ss \W\ss\R^n$ be open bounded sets, $\W_-= \W\sm\overline{\W_+}$, and $\Gamma=\W\sm(\W_+\cup\W_-)=\p\W_\pm\cap\W$. Assume that $\W_\pm$ have uniformly Lipschitz regular boundaries. Consider $H_\pm$ as degenerate elliptic operators defined over $\W_\pm\cup\Gamma$ respectively, such that $H_-$ is a first-order operator with bounded sub-level sets, while $H_+$ is a uniformly elliptic second-order operator.
    
    Given $g \in C(\overline \W)$ being a sub-solution of \eqref{eq:main3}, there exists a viscosity solution of \eqref{eq:main3} in $\W$ taking the boundary value $g$ on $\p\W$.
\end{theorem}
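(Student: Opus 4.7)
The plan is to verify the hypotheses of Perron's method (Lemma \ref{cor:perron2}) and conclude. Let $\mathcal{S}_g$ denote the set of viscosity sub-solutions of \eqref{eq:main3} in $C(\overline\W)$ taking the boundary value $g$ on $\p\W$. Since $g$ itself is a sub-solution that trivially agrees with $g$ on $\p\W$, we have $g \in \mathcal{S}_g$, so $\mathcal{S}_g$ is non-empty. What remains is to produce an equicontinuous and bounded family $\mathcal{S}\ss\mathcal{S}_g$ whose pointwise supremum coincides with $u_g(x) := \sup_{\mathcal{S}_g} u(x)$.

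The key tool is the lifting operation of Lemma \ref{lem:lift}. For each $u \in \mathcal{S}_g$, let $\bar u \in C(\overline\W)$ be the function that solves $H_+\bar u = 0$ in $\W_+$ and equals $u$ on $\W \sm \W_+$. Lemma \ref{lem:lift} guarantees that $\bar u$ is well defined, that $\bar u \in \mathcal{S}_g$ (in particular $\bar u = g$ on $\p\W$), and, crucially, that its modulus of continuity depends only on $\W$, $\W_\pm$, $H_\pm$, and $g$, and \emph{not} on the particular $u$. Moreover, $\bar u \geq u$ pointwise: on $\W \sm \W_+$ this is equality by construction, while on $\W_+$ the classical comparison principle for the uniformly elliptic Dirichlet problem, applied to the sub-solution $u$ of $H_+u\leq 0$ and the solution $\bar u$ with matching continuous boundary data on $\p\W_+$, yields $u \leq \bar u$.

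Define $\mathcal{S} := \{\bar u : u \in \mathcal{S}_g\}\ss\mathcal{S}_g$. The uniform modulus of continuity from Lemma \ref{lem:lift} makes $\mathcal{S}$ equicontinuous on the compact set $\overline\W$; combined with the common boundary value $g$ and the boundedness of $\W$, it also produces a uniform $L^\8$ bound for $\mathcal{S}$, since for any $x\in\overline\W$ one has $|\bar u(x)|\leq \|g\|_{L^\8(\p\W)}+\omega(\operatorname{diam}\overline\W)$, where $\omega$ is the common modulus. From the pointwise inequality $\bar u \geq u$ we deduce $\sup_{w \in \mathcal{S}}w(x) = \sup_{u \in \mathcal{S}_g}u(x) = u_g(x)$ at every $x \in \overline\W$. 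The hypothesis of Lemma \ref{cor:perron2} is therefore met, and that lemma furnishes the required viscosity solution of \eqref{eq:main3} in $\W$ attaining $g$ continuously on $\p\W$.

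The single nontrivial step is the lifting lemma itself, which already packages the Lipschitz regularity of sub-solutions of $H_-$ on $\W \sm \W_+$ (via Lemma \ref{lem:lips1}), the existence of barriers at $\p\W_+$ for the uniformly elliptic operator $H_+$, and the interior H\"older estimate that produces the uniform modulus. With Lemma \ref{lem:lift} and the classical comparison principle on $\W_+$ in hand, the rest of the argument is essentially bookkeeping with a lifting plus an envelope; in particular, no comparison principle for the full transmission problem \eqref{eq:main3} is invoked, which is consistent with the theorem appearing among the preliminaries.
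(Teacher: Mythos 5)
Your argument is exactly the one the paper intends: Theorem \ref{thm:existence} is stated as a direct combination of Lemma \ref{lem:lift} with Lemma \ref{cor:perron2}, and your proposal fills in precisely the required bookkeeping—showing the lifted family $\mathcal S=\{\bar u: u\in\mathcal S_g\}$ is an equicontinuous subset of $\mathcal S_g$ whose supremum still equals $u_g$ because $\bar u\ge u$ by the comparison principle on $\W_+$. The observations that $g\in\mathcal S_g$ and that boundedness follows from equicontinuity plus the common boundary value are also consistent with the remarks in the paper after Lemma \ref{cor:perron2}.
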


For instance, the function $g=0$ serves as a sub-solution for the original problem discussed in the introduction.

\begin{corollary}\label{cor:existence}
    Let $\W_+\ss \W\ss\R^n$ be open bounded sets, $\W_-= \W\sm\overline{\W_+}$, and $\Gamma=\W\sm(\W_+\cup\W_-)=\p\W_\pm\cap\W$. Assume that $\W_\pm$ have uniformly Lipschitz regular boundaries. Then there exists a viscosity solution $u\in C(\overline\W)$ of \eqref{eq:main} with $u=0$ on $\p\W$.
\end{corollary}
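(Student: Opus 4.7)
The plan is to invoke Theorem~\ref{thm:existence} directly, with the constant function $g\equiv 0$ playing the role of the boundary datum. First I would identify the data of~\eqref{eq:main} with the abstract framework of that theorem: set
\[
H_- u := |Du|-1 \text{ on } \W_-\cup\G, \qquad H_+ u := \tfrac{1}{2}(-\D)u -1 \text{ on } \W_+\cup\G,
\]
so that $\W_-$ plays the role of $\W_E$ and $\W_+$ that of $\W_B$. Both operators are continuous and degenerate elliptic. The operator $H_-$ is first-order, translation invariant, and for every $\l>-1$ its sub-level set $\{p : |p|-1\leq \l\}$ equals $\overline{B_{1+\l}}$, so it has bounded sub-level sets. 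The operator $H_+$ is linear and uniformly elliptic with $\l=\L=\tfrac{1}{2}$. The domain hypotheses (bounded $\W_\pm$ with uniformly Lipschitz regular boundaries) are exactly those assumed here. Thus all the structural hypotheses of Theorem~\ref{thm:existence} are satisfied.

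The remaining task is to verify that $g\equiv 0\in C(\overline\W)$ is a viscosity sub-solution of~\eqref{eq:main}. Since $g$ is smooth, the operators can be evaluated classically: at every point
\[
H_- g = |0|-1 = -1\leq 0, \qquad H_+ g = \tfrac{1}{2}(-\D)(0)-1 = -1\leq 0,
\]
and in particular $\min(H_- g, H_+ g) = -1\leq 0$ on $\G$. Hence $g$ is a classical, and therefore also viscosity, sub-solution of~\eqref{eq:main}, and obviously $g=0$ on $\p\W$. Theorem~\ref{thm:existence} now produces a viscosity solution $u\in C(\overline\W)$ of~\eqref{eq:main} taking the boundary value $u=0$ on $\p\W$, which is exactly the assertion of the corollary.

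There is no real obstacle here, as the corollary is a direct specialization of Theorem~\ref{thm:existence}. The only content worth making explicit is the correspondence between the operators $(H_-,H_+)=(|p|-1,\tfrac{1}{2}(-\D)-1)$ and the abstract pair $(H_-,H_+)$ appearing in the theorem, together with the (trivial) observation that $g\equiv 0$ is an admissible sub-solution with the correct boundary values. All of the analytic weight — the barrier construction of~\cite{MR221087}, the interior Hölder estimates for uniformly elliptic equations, and Perron's method — is absorbed into the statements of Theorem~\ref{thm:existence}, Lemma~\ref{lem:lift}, and Lemma~\ref{cor:perron2}.
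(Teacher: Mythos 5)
Your proposal is correct and is essentially the paper's own argument: the paper observes in the sentence immediately preceding the corollary that $g\equiv 0$ is a sub-solution of \eqref{eq:main}, and then invokes Theorem~\ref{thm:existence}. Your write-up simply makes explicit the identification of the operators with the abstract framework and the (trivial) verification that $g\equiv 0$ is a sub-solution, which is exactly what is intended.
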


In the Section \ref{sec:exis_one_d} we discussed the one dimensional problem over $\W=(-1,1)$, with $\W_+ = (0,1)$. 
Recall that in this case, when we set $g(-1)=0$ and $g(1)<-2$ there are no solutions to the boundary value problem. In a few words, any pair of sub-solutions in $\W_\pm$, with the given boundary values, and coinciding at the interface $\Gamma = \{0\}$, will not satisfy the sub-solution property at zero.

From this point the main goal of the article is to establish uniqueness of solutions for the Dirichlet boundary value problem.

\section{Strong equation}
\label{sec:strong}

In this section we find a stronger characterization for the problem \eqref{eq:main3} with a $C^2$-regular interface $\Gamma$, and $H_+$ uniformly elliptic. Namely, the solutions of \eqref{eq:main3} are exactly the solutions of
\begin{align}\label{eq:main4}
\begin{cases}
H_- u = 0 \text{ in } \W_-\cup \Gamma,\\
H_+ u = 0 \text{ in } \W_+.
\end{cases}
\end{align}

\begin{theorem}\label{thm:strong}
Let $\W_+\ss \W\ss\R^n$ be open sets, $\W_-= \W\sm\overline{\W_+}$, and $\Gamma=\W\sm(\W_+\cup\W_-)=\p\W_\pm\cap\W$ a $C^2$-regular interface. Let $H_\pm$ be degenerate elliptic operators over $\W_\pm\cup\Gamma$, with $H_+$ uniformly elliptic. Then $u\in C(\W)$ is a viscosity sub-solution (super-solution or solution) of the relaxed problem \eqref{eq:main3} if and only if it is a viscosity sub-solution (super-solution or solution) of the strong problem \eqref{eq:main4}.
\end{theorem}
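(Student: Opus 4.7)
The direction ``strong $\Rightarrow$ relaxed'' is immediate, since $H_- u \leq 0$ on $\Gamma$ implies $\min(H_- u, H_+ u) \leq 0$ and $H_- u \geq 0$ implies $\max(H_- u, H_+ u) \geq 0$. I focus on the converse for sub-solutions; the super-solution case is symmetric.

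Following the Main Ideas outline, my plan is to pass through an intermediate characterization using ``split'' test functions $\psi = (\psi_-, \psi_+) \in C(B_r(x_0)) \cap C^2(B_r(x_0) \cap \overline{\W_-}) \cap C^2(B_r(x_0) \cap \overline{\W_+})$, continuous across $\Gamma$ but possibly with a gradient jump there. The intermediate lemma to be established asserts: if $u$ is a relaxed sub-solution and such a split $\psi$ touches $u$ from above at $x_0 \in \Gamma$ with strictly concave angle $\partial_\nu \psi_-(x_0) > \partial_\nu \psi_+(x_0)$ (where $\nu$ is the interior normal to $\W_+$), then $H_- \psi_+(x_0) \leq 0$; the super-solution analogue reverses inequalities and concerns strictly convex angles.

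I would prove the intermediate lemma by a corner-smoothing argument. Using the $C^2$-regularity of $\Gamma$, reduce by a local diffeomorphism to $\Gamma = \{x_n = 0\}$, extend $\psi_\pm$ to $C^2$ functions in a full neighborhood of $x_0$, and introduce
\[
\psi_\epsilon := \frac{\psi_- + \psi_+}{2} - \frac{1}{2}\sqrt{(\psi_- - \psi_+)^2 + \epsilon^2} + \frac{\epsilon}{2}.
\]
In the concave case $\psi = \min(\psi_-, \psi_+)$ near $x_0$, and a direct calculation shows $\psi_\epsilon \geq \psi \geq u$ with equality at $x_0$, so $\psi_\epsilon$ is a bona fide $C^2$ test function touching $u$ from above. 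Moreover $D\psi_\epsilon(x_0) = \tfrac12 (D\psi_-(x_0) + D\psi_+(x_0))$ is $\epsilon$-independent, while the $\nu\nu^T$ component of $D^2 \psi_\epsilon(x_0)$ diverges like $-\epsilon^{-1}(\partial_\nu \psi_- - \partial_\nu \psi_+)^2$ as $\epsilon \to 0^+$. By the uniform ellipticity of $H_+$, this forces $H_+ \psi_\epsilon(x_0) \to +\infty$, so the relaxed condition $\min(H_-\psi_\epsilon, H_+\psi_\epsilon)(x_0) \leq 0$ forces $H_-\psi_\epsilon(x_0) \leq 0$; continuity of $H_-$ then yields the inequality at the averaged gradient, and modifying $\partial_\nu \psi_-$ (for instance by replacing $\psi_-$ with $\psi_- + \alpha\, d_\Gamma$) pushes this average arbitrarily close to $D\psi_+(x_0)$, giving $H_-\psi_+(x_0) \leq 0$.

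To deduce the strong equation, given a $C^2$ test function $\varphi$ touching $u$ from above at $x_0 \in \Gamma$, construct a family of splits $(\psi_-^\delta, \varphi)$ with $\partial_\nu \psi_-^\delta(x_0) > \partial_\nu \varphi(x_0)$ converging to equality as $\delta \to 0^+$, where $\psi_-^\delta$ still touches $u$ from above on $B_r(x_0)\cap\overline{\W_-}$; applying the intermediate lemma along this family and invoking continuity of $H_-$ yields $H_-\varphi(x_0) \leq 0$, which is the strong sub-solution inequality. The super-solution direction proceeds analogously via a smoothed-maximum regularization in the convex-angle case, for which $D^2\psi_\epsilon(x_0)$ blows up to $+\infty$ in $\nu\nu^T$ and forces $H_+\psi_\epsilon(x_0) \to -\infty$. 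The principal technical obstacle is the construction of the perturbed splits $(\psi_-^\delta, \varphi)$: altering the normal derivative of $\psi_-^\delta$ at $x_0$ relative to $\varphi$ generally threatens the inequality $\psi_-^\delta \geq u$ near $x_0$, since $\varphi - u$ may vanish faster than linearly. The remedy involves a strict-contact quadratic bump $\varphi \to \varphi + c|x-x_0|^2$, which creates quadratic slack sufficient to absorb a small linear $d_\Gamma$ perturbation on the $\W_-$ side, with the $C^2$-regularity of $\Gamma$ ensuring $d_\Gamma$ is smooth in a tubular neighborhood of $\Gamma$.
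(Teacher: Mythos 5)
Your overall strategy — passing through an intermediate characterization with split test functions and using the uniform ellipticity of $H_+$ to force the $H_+$ branch into irrelevance — is the same as the paper's (Lemma \ref{lemma1}, Corollary \ref{cor:strong_eq}, Lemma \ref{lemma1.5}). Your proof of the intermediate lemma via a smoothed minimum $\psi_\epsilon$ is a genuine alternative to the paper's direct construction $\psi = P_+ + \epsilon x_n - \alpha x_n^2$: the Hessian of $\psi_\epsilon$ blows up in the $e_n\otimes e_n$ direction at rate $\epsilon^{-1}(\p_n\psi_--\p_n\psi_+)^2$, which plays the role of $\alpha x_n^2$ with $\alpha$ large. (One gap to patch there: the identity $\psi=\min(\psi_-,\psi_+)$ near $x_0$ is not automatic; the two extended graphs separate only to \emph{first} order in $x_n$, and tangential quadratic terms can reverse the sign of $\psi_--\psi_+$. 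The fix is to replace $\psi_-$ by $\psi_- + K|x'-x_0'|^2$ for $K$ large before smoothing, which preserves the contact and the gradients at $x_0$.)

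The serious gap is in your deduction of the strong equation from the intermediate lemma. You acknowledge the obstacle — that perturbing the normal derivative of $\psi_-^\delta$ to create a strictly concave angle risks pushing $\psi_-^\delta$ below $u$ — but the remedy you propose does not work. To get $\p_n\psi_-^\delta(x_0)>\p_n\varphi(x_0)$ you must \emph{decrease} $\psi_-^\delta$ below $\varphi$ on $\W_-$ by an amount comparable to $\delta|x_n|$, and this deficit is linear in $x_n$. The quadratic bump $c|x-x_0|^2$ supplies only $c|x-x_0|^2\leq c|x_n|^2 + c|x'-x_0'|^2$ worth of slack, and $c x_n^2 < \delta|x_n|$ as soon as $|x_n|<\delta/c$. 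No choice of $c$ and $\delta>0$ makes $\varphi + c|x-x_0|^2 - \delta d_\Gamma \geq u$ in a full neighborhood of $x_0$ when the contact $\varphi-u$ is tangential (second order), which is the generic case. The paper avoids this entirely: in the proof of Lemma \ref{lemma1.5} it perturbs $\varphi$ on the $\W_-$ side by $\epsilon\,\eta(|x'-x_0|/\rho_1)\,x_n(x_n+\rho_1/2)$ (which does increase $\p_n$ at $x_0$ by $\epsilon\rho_1/2$) and crucially does \emph{not} insist that the perturbed $\psi$ touch $u$ from above. Instead it runs a dichotomy: either $\psi$ still touches $u$ at $x_0$ (in which case the split test function argument closes), or some vertical translate $\psi+c$ touches $u$ at an interior point $x^*\in\W_-$ — and that interior contact contradicts the (contradiction) hypothesis $H_-\varphi>0$ on $B_{\rho_1}(x_0)\cap(\W_-\cup\Gamma)$ by the interior sub-solution property. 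That dichotomy, combined with the hypothesis that $H_-\varphi>0$ on a full relative neighborhood (not just at $x_0$), is the missing ingredient. To repair your proof, replace the quadratic-bump construction with this either/or: suppose $H_-\varphi>0$ on a relative neighborhood, perturb $\varphi$ by a term that is nonnegative on the relevant portion of $\p(B_{\rho_1}(x_0)\cap\W_-)$, vanishes at $x_0$, and has positive normal derivative there; if the perturbation destroys the contact at $x_0$, a new contact occurs in $\W_-$ and the interior equation gives the contradiction; otherwise apply your intermediate lemma.
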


After a local $C^2$-regular change of variables we can assume without loss of generality that $\W=B_1$, $\W_\pm = B_1\cap\{\pm x_n>0\}$ and $\Gamma = B_1\cap\{x_n=0\}$. This is an assumption adopted in the rest of the section.

As it will be shown in our proof, we only need a coercivity hypothesis on $H_+$ with respect to the Hessian variable. Namely that for any $(M,p,z,x)\in \R^{n\times n}_{\rm sym}\times\R^n\times\R\times\W$ there exists some $\a> 0$ such that\footnote{For $v,w\in\R^n$, we denote $v\otimes w \in \R^{n\times n}$ such that $(v\otimes w)_{ij} = v_iw_j$.}
\[
H_+(M-\a (e_n\otimes e_n),p,z,x) >0.
\]
This follows from a uniform ellipticity hypothesis because of the inequality
\[
H_+(M-\a (e_n\otimes e_n),p,z,x) - H_+(M,p,z,x) \geq \cM^-_{\l,\L}(-\a(e_n\otimes e_n)) = \a\l.
\] 

In the following lemmas we denote by $\varphi_\pm$ the restrictions of a given function $\varphi:\W\ss\R^n\to \R$ to $\W_\pm\cup \Gamma$ respectively. The hypothesis $\varphi \in C(\W)\cap C^2(\W_-\cup\Gamma)\cap C^2(\W_+\cup\Gamma)$ means that $\varphi$ is continuous across $\G$, and each one of the restrictions $\varphi_\pm$ are $C^2$-regular up to $\Gamma$.

\begin{lemma}\label{lemma1}
Let $\W=B_1$, $\W_\pm=B_1\cap\{\pm x_n>0\}$, and $\Gamma=B_1\cap\{x_n=0\}$. Let $H_\pm$ be degenerate elliptic operators over $\W_\pm\cup\Gamma$, with $H_+$ uniformly elliptic. Then, for any $\varphi \in C(\W)\cap C^2(\W_-\cup\Gamma)\cap C^2(\W_+\cup\Gamma)$ such that
\[
\min(H_-\varphi_+(0),\p_n \varphi_-(0)-\p_n\varphi_+(0))>0,
\]
there exists a test function $\psi \in C^2(B_\r)$, with $\r\in(0,1)$, that touches $\varphi$ from above at the origin and satisfies
\[
\min(H_-\psi(0),H_+\psi(0))>0.
\]
\end{lemma}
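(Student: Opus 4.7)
My plan is to build $\psi$ from a $C^2$-extension $\tilde\varphi_+$ of $\varphi_+$ across $\Gamma$ and correct it by a downward quadratic in $x_n$ together with a small linear compensator. The quadratic produces the coercivity needed for $H_+$; the linear compensator exploits the assumed slope gap between $\varphi_-$ and $\varphi_+$ to preserve the touching from above. Write $a := \partial_n \varphi_-(0) - \partial_n \varphi_+(0) > 0$ and $b := H_-\varphi_+(0) > 0$.

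First I would extend $\varphi_+$ to some $\tilde\varphi_+ \in C^2(B_{1/2})$, which is possible because $\varphi_+$ is $C^2$ up to $\Gamma$. Since $\varphi \in C(\W)$, the restrictions $\varphi_\pm$ agree on $\Gamma$, so the $C^2$-function $F := \tilde\varphi_+ - \varphi_-$ on $\{x_n \leq 0\} \cap B_{1/2}$ vanishes on $\Gamma$. Hadamard's lemma then writes $F(x) = x_n h(x)$ with $h$ continuous and $h(0) = \partial_n \tilde\varphi_+(0) - \partial_n \varphi_-(0) = -a$. Shrinking the radius, $h \leq -a/2$, hence
\[
\tilde\varphi_+(x) - \varphi_-(x) \geq (a/2)\,|x_n| \qquad \text{for } x \in B_{r_1}\cap\{x_n\leq 0\}
\]
for some $r_1 \in (0,1)$. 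This gap on the lower side is the crucial slack for what follows.

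Next, for parameters $\alpha,\beta>0$ to be chosen, I would take
\[
\psi(x) := \tilde\varphi_+(x) + \beta x_n - (\alpha/2)\,x_n^2.
\]
Then $\psi \in C^2$, $\psi(0) = \varphi(0)$, $\nabla\psi(0) = \nabla\varphi_+(0) + \beta e_n$, and $D^2\psi(0) = D^2\varphi_+(0) - \alpha\, e_n\otimes e_n$. To check $\psi \geq \varphi$ on $B_\rho$: on $\{x_n\geq 0\}$ one has $\psi - \varphi_+ = x_n(\beta - \alpha x_n/2) \geq 0$ for $x_n \in [0, 2\beta/\alpha]$; on $\{x_n\leq 0\}$, combining with the gap above,
\[
\psi - \varphi_- \geq (a/2)|x_n| + \beta x_n - (\alpha/2) x_n^2 = |x_n|\bigl[(a/2-\beta) - (\alpha/2)|x_n|\bigr],
\]
which is $\geq 0$ provided $\beta < a/2$ and $|x_n|\leq (a-2\beta)/\alpha$.

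It remains to verify the two inequalities at the origin and to fix the parameters. By continuity of $H_-$ at $(\nabla\varphi_+(0),\varphi(0),0)$ and the hypothesis $H_-\varphi_+(0) = b > 0$, one has $H_-\psi(0) > 0$ whenever $|\beta|$ is small. For $H_+$, uniform ellipticity gives the coercivity
\[
H_+\bigl(M - \alpha e_n\otimes e_n,\, p,\, z,\, x\bigr) \geq H_+(M,p,z,x) + \alpha\lambda,
\]
so once $\beta$ is fixed, taking $\alpha$ large enough yields $H_+\psi(0) > 0$. The order of choice is: first pick $\beta \in (0,a/2)$ small so that $H_-\psi(0)>0$; then pick $\alpha$ large so that $H_+\psi(0)>0$; finally set $\rho := \min\bigl(2\beta/\alpha,\,(a-2\beta)/\alpha,\,r_1\bigr)$. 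The main obstacle is exactly this balancing: ellipticity demands $D^2\psi(0)$ very negative in the $e_n$-direction, which would naively force $\psi<\varphi_+$ on the upper side; the assumed slope gap $a>0$ is precisely what creates the headroom to insert the linear corrector $\beta x_n$ and recover the touching from above.
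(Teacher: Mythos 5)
Your proof is correct and follows essentially the same strategy as the paper's: insert a linear corrector $\beta x_n$ (allowed by the positive jump $a$ in the normal derivative) and a downward quadratic $-\tfrac{\alpha}{2}x_n^2$ (handled by uniform ellipticity with $\alpha$ large), choosing $\beta$ first and $\alpha$ second. The only technical difference is bookkeeping: the paper replaces $\varphi_\pm$ by their second-order Taylor polynomials $P_\pm$ padded with $\tfrac{\epsilon}{2}|x|^2$ and bounds $P_--P_+$ explicitly, whereas you use an exact $C^2$ extension $\tilde\varphi_+$ and Hadamard's lemma on $\tilde\varphi_+-\varphi_-$ to produce the lower bound $(a/2)|x_n|$; both devices yield the same one-sided control needed to verify the touching from above, so this is a legitimate and clean variant rather than a substantively different argument.
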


We would like to emphasize the geometric interpretation of the inequality $\p_n \varphi_-(0)-\p_n\varphi_+(0)>0$. Specifically, this condition implies that the graph of $\varphi$ forms a concave angle around the origin and along $\G$.

\begin{proof}
Given $\e\in(0,1)$, and $\a>0$, consider the functions
\begin{align*}
P_+(x) &:= \varphi(0) + D\varphi_+(0)\cdot x + \tfrac{1}{2} x\cdot D^2\varphi_+(0)x + \tfrac{\e}{2}|x|^2,\\
P_-(x) &:= \varphi(0) + D\varphi_-(0)\cdot x + \frac{1}{2} x\cdot D^2\varphi_-(0)x + \tfrac{\e}{2}|x|^2,\\
\psi(x) &:= P_+(x) + \e x_n - \a x_n^2.
\end{align*}
By having that $\varphi_+$ and $\varphi_-$ are second-order differentiables at the origin we have that there exists some $\r_0 \in(0,1)$ sufficiently small such that
\[
\begin{cases}
\varphi_+ \leq P_+\text{ in } B_{\r_0}\cap \{x_n\geq 0\},\\
\varphi_- \leq P_-\text{ in }B_{\r_0}\cap \{x_n\leq 0\}.
\end{cases}
\]

For $\e\in(0,1)$ sufficiently small we get by continuity and the degenerate ellipticity that
\[
H_-\psi(0)>0 \qquad\text{ and } \qquad \p_n \varphi_-(0)-\p_n\varphi_+(0)-2\e>0.
\]
Take now $\a$ sufficiently large such that, thanks to the uniform ellipticity, we can enforce $H_+\psi(0)> 0$. Hence the proposed $\psi$ already satisfies the desired inequality,
\[
\min(H_-\psi(0), H_+\psi(0))>0.
\]

To finish the proof we need to show that $\psi$ touches $\varphi$ from above at the origin over some sufficiently small ball $B_\r$. This follows if for some $\r \in(0,\r_0)$ it holds that
\[
\begin{cases}
P_+\leq \psi \text{ in } B_\r \cap \{x_n>0\},\\
P_-\leq \psi \text{ in } B_\r \cap \{x_n<0\}.
\end{cases}
\]

The inequality over $B_\r \cap\{x_n>0\}$ is equivalent to checking that $\e x_n \geq \a x_n^2$. This is clearly the case once we take $\r\leq \e/\a$.

To prove the inequality over $B_\r \cap \{x_n<0\}$, we need to show that $\e x_n \geq P_--P_++\a x_n^2$ holds over the same domain. Here, we should keep in mind that $P_--P_+ = x_n R$, where $R(x)$ is defined as follows
\[
R(x) := (\p_n\varphi_-(0)-\p_n\varphi_+(0))+ \sum_{j=1}^{n-1} (\p_{nj}\varphi_-(0)-\p_{nj}\varphi_+(0))x_j + \frac{1}{2}(\p_{nn}\varphi_-(0)-\p_{nn}\varphi_+(0)) x_n.
\]
We use that $\p_n\varphi_-(0)-\p_n\varphi_+(0)>2\e$ and bound the quadratic terms by $C\r|x_n|$, then $P_--P_++\a x_n^2 \leq 2\e x_n + C\r|x_n|$. This quantity gets bounded by $\e x_n$ if we choose $\r\leq \e/C$.
\end{proof}

As a consequence of the previous lemma we see that the transmission condition only takes into consideration the uniform ellipticity of $H_+$ and ignores the rest of its structure. The following corollary gives an intermediate formulation, still in the viscosity sense spirit, between the problems \eqref{eq:main3} and \eqref{eq:main4}. We skip the proof as it follows directly from the definitions.

\begin{corollary}\label{cor:strong_eq}
Let $\W=B_1$, $\W_\pm=B_1\cap\{\pm x_n>0\}$, and $\Gamma=B_1\cap\{x_n=0\}$. Let $H_\pm$ be degenerate elliptic operators over $\W_\pm\cup\Gamma$, with $H_+$ uniformly elliptic. Let $u\in C(\W)$ be a viscosity sub-solution of \eqref{eq:main3}. Then for every
\[
\varphi \in C(B_r)\cap C^2(B_r\cap(\W_-\cup\Gamma))\cap C^2(B_r\cap(\W_+\cup\Gamma))
\]
that touches $u$ from above at $0\, (\in \Gamma)$ it holds that
\[
\min(H_-\varphi_+(0),\p_n\varphi_-(0)-\p_n\varphi_+(0)) \leq 0.
\]
\end{corollary}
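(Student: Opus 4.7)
The plan is a short proof by contradiction that invokes Lemma \ref{lemma1} to upgrade the hypothesized interface inequality into a genuine $C^2$ touching against $u$, which then contradicts the definition of viscosity sub-solution on $\Gamma$. Concretely, suppose toward a contradiction that
\[
\min\bigl(H_-\varphi_+(0),\,\p_n\varphi_-(0)-\p_n\varphi_+(0)\bigr) > 0.
\]
After rescaling the ball $B_r$ to $B_1$ (which does not affect the sign of either term since the scaling only alters $\varphi$ by a positive factor in the relevant one-sided derivatives and in $H_-$ via continuity), $\varphi$ satisfies the hypotheses of Lemma \ref{lemma1}. That lemma then produces a single smooth test function $\psi\in C^2(B_\rho)$ that touches $\varphi$ from above at the origin and satisfies
\[
\min\bigl(H_-\psi(0),\,H_+\psi(0)\bigr) > 0.
\]

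The second step is purely definitional: touching from above is transitive, so since $\varphi$ touches $u$ from above at $0$ and $\psi$ touches $\varphi$ from above at $0$, the composed inequality $u\le \varphi\le\psi$ on $B_\rho$ with equality at $0$ shows that $\psi$ touches $u$ from above at $0\in\Gamma$. Because $u$ is a viscosity sub-solution of \eqref{eq:main3} and $0\in\Gamma$, the interface clause in the definition of sub-solution yields $\min(H_-\psi(0),\,H_+\psi(0))\le 0$, contradicting the strict positive inequality produced by Lemma \ref{lemma1}. This establishes the claimed inequality.

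There is essentially no hard step here: all of the analytic content (the one-sided paraboloid expansions on each side of $\Gamma$, the use of the perturbation $\e x_n-\a x_n^2$ to reconcile the two pieces, and the activation of uniform ellipticity of $H_+$ to absorb the $-\a x_n^2$ term) has already been packaged into Lemma \ref{lemma1}. The only minor subtlety to check is the domain convention, namely that Lemma \ref{lemma1} is stated on $B_1$ while the corollary is stated on $B_r$; this is resolved by an immediate linear change of variables, which preserves the $C^2$ regularity of $\varphi_\pm$ up to $\Gamma$ and the degenerate-elliptic/uniformly-elliptic structure of $H_\pm$. This explains why the authors note that the corollary follows directly from the definitions.
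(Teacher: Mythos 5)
Your proof is correct and is exactly the (unwritten) argument the paper intends: run the contrapositive of Lemma \ref{lemma1}, use transitivity of ``touching from above,'' and close with the interface clause in the definition of a viscosity sub-solution of \eqref{eq:main3}. The paper simply elides this because it follows from the definitions, which is precisely what you have supplied.

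One small quibble with the parenthetical about passing from $B_r$ to $B_1$. Rescaling $x\mapsto rx$ multiplies $D\varphi_+(0)$ by $r$, and since $H_-$ is nonlinear in the gradient slot, the sign of $H_-\varphi_+(0)$ is not preserved under this scaling in general; so ``does not affect the sign \dots via continuity'' is not a valid justification. The clean resolution is simpler: the construction in the proof of Lemma \ref{lemma1} is purely local, using only the second-order Taylor expansions of $\varphi_\pm$ at the origin and a sufficiently small output radius $\rho$, so its hypothesis ``$\varphi \in C(B_1)\cap C^2(\ldots)$'' can be replaced verbatim by ``$\varphi \in C(B_r)\cap C^2(\ldots)$'' for any $r\in(0,1]$ without any change of variables. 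With that observation substituted for the rescaling remark, the argument is complete.
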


This following lemma makes the final connection with the strong problem \eqref{eq:main4}.

\begin{lemma}\label{lemma1.5}
Let $\W=B_1$, $\W_\pm=B_1\cap\{\pm x_n>0\}$, and $\Gamma=B_1\cap\{x_n=0\}$. Let $H_\pm$ be degenerate elliptic operators over $\W_\pm\cup\Gamma$, with $H_+$ uniformly elliptic. Let $u\in C(\W)$ be such that for every
\[
\varphi \in C(B_r(x_0))\cap C^2(B_r(x_0)\cap(\W_-\cup\Gamma))\cap C^2(B_r(x_0)\cap(\W_+\cup\Gamma))
\]
that touches $u$ from above at $x_0\in \W$ it holds that
\[
\begin{cases}
H_-\varphi(x_0) \leq 0 \text{ if } x_0 \in \W_-,\\
H_+\varphi(x_0)  \leq 0 \text{ if } x_0 \in \W_+,\\
\min(H_-\varphi_+(x_0),\p_n\varphi_-(x_0)-\p_n\varphi_+(x_0)) \leq 0 \text{ if } x_0 \in \Gamma.
\end{cases}
\]
Then $u$ is a viscosity sub-solution of \eqref{eq:main4}.
\end{lemma}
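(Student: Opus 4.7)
The plan is to verify the sub-solution condition of \eqref{eq:main4} pointwise in $\W_-$, $\W_+$, and $\G$ separately. The two interior cases are immediate: if $x_0\in \W_\pm$ and $\varphi\in C^2(B_r(x_0))$ touches $u$ from above at $x_0$, then for $r$ small enough $B_r(x_0)\ss\W_\pm$, so $\varphi$ itself belongs to the broader class of admissible test functions and the hypothesis applies directly to give $H_\pm\varphi(x_0)\leq 0$. The substantive case is $x_0\in\G$: here, for any globally $C^2$ test function, the normal-derivative jump $\p_n\varphi_-(x_0)-\p_n\varphi_+(x_0)$ vanishes identically, so the interface part of the hypothesis is trivially satisfied and delivers no information about $H_-\varphi(x_0)$. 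To extract the inequality $H_-\varphi(x_0)\leq 0$, I would perturb $\varphi$ into the broader class so as to manufacture a strictly positive normal-derivative jump while preserving the contact with $u$.

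The perturbation I propose is $\tilde\varphi_\e(x):=\varphi(x)-\e(x_n)_-$. This function coincides with $\varphi$ on $\overline{\W_+}$ and equals $\varphi+\e x_n$ on $\overline{\W_-}$, so it is continuous across $\G$, piecewise $C^2$ in the sense required by the broader class, and satisfies the key identity $\p_n(\tilde\varphi_\e)_-(y)-\p_n(\tilde\varphi_\e)_+(y)=\e>0$ at every $y\in\G$. However, since $\tilde\varphi_\e\leq\varphi$ globally, the contact with $u$ is lost. I restore it by a standard Perron-type maximization: after replacing $\varphi$ by $\varphi+|x-x_0|^4$ to make the contact strict, and fixing $r>0$ so that $\min_{\p B_r(x_0)}(\varphi-u)\geq m>0$, I choose $\e<m/(2r)$, which forces $u-\tilde\varphi_\e<0$ on $\p B_r(x_0)$. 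Consequently $M_\e:=\sup_{B_r(x_0)}(u-\tilde\varphi_\e)\in[0,\e r]$ is attained at some interior point $y_\e$, and $\tilde\varphi_\e+M_\e$ touches $u$ from above at $y_\e$. Since $M_\e\to 0$ and the contact of $\varphi$ with $u$ is strict, $y_\e\to x_0$ along a subsequence.

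Arguing by contradiction, I would then assume $H_-\varphi(x_0)>0$ and case-split on the location of $y_\e$. The case $y_\e\in\W_+$ is self-defeating: there $u(y_\e)-\tilde\varphi_\e(y_\e)=u(y_\e)-\varphi(y_\e)\leq 0$, forcing $M_\e=0$ and $u(y_\e)=\varphi(y_\e)$, which by strict touch implies $y_\e=x_0\in\G$, contradicting $y_\e\in\W_+$. If $y_\e\in\W_-$, then $\tilde\varphi_\e+M_\e$ is $C^2$ in a neighbourhood of $y_\e$ and touches $u$ from above, so the $\W_-$ part of the hypothesis yields $H_-(\tilde\varphi_\e+M_\e)(y_\e)\leq 0$, and passing to the limit via the continuity of $H_-$ with $y_\e\to x_0$ and $\e,M_\e\to 0$ produces $H_-\varphi(x_0)\leq 0$. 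If $y_\e\in\G$, the broader-class hypothesis applies at $y_\e$, and since the normal-derivative jump of $\tilde\varphi_\e+M_\e$ equals $\e>0$, the minimum condition forces $H_-(\tilde\varphi_\e+M_\e)_+(y_\e)=H_-(\varphi+M_\e)(y_\e)\leq 0$, which in the limit again gives $H_-\varphi(x_0)\leq 0$. All three possibilities contradict the assumption, completing the proof.

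The main obstacle is the design of the perturbation $\tilde\varphi_\e$: it must simultaneously create a strictly positive normal-derivative jump on $\G$ (so that the minimum condition translates into a genuine constraint on $H_-$ on the $\W_+$-side of the interface) and stay close enough to $\varphi$ that a small vertical shift recovers a touching-from-above contact near $x_0$. Once the choice $\tilde\varphi_\e=\varphi-\e(x_n)_-$ is fixed, the remaining analysis is a routine combination of Perron-type localization and stability of the viscosity inequality under the limit $\e\to 0$.
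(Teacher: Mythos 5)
Your proof is correct, but it takes a genuinely different route from the paper's. Both arguments need a perturbation of the $C^2$ test function $\varphi$ into the piecewise-$C^2$ class that creates a positive normal-derivative jump, but the two perturbations are designed with opposite conventions. The paper adds $\e\eta(|x'-x_0|/\r_1)\,x_n(x_n+\r_1/2)$, a bump-supported perturbation that vanishes on $\G$, is nonnegative on $\p(B_{\r_1}(x_0)\cap\W_-)$, and dips below $\varphi$ only in a thin strip of $\W_-$ near the interface; with this sign structure the paper rules out a relocated touching point inside $\W_-$ (using the contradiction hypothesis $H_-\varphi>0$ in a whole neighborhood) and thereby preserves the contact at $x_0$ itself, so no limit is needed. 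You instead subtract $\e(x_n)_-$, a globally nonpositive perturbation that lowers $\varphi$ throughout $\W_-$, deliberately destroys the contact, and then recovers a nearby contact point $y_\e$ via the strict-touch plus sup-shift device and pass to the limit $\e\to0$. Your approach is somewhat more modular and uses only $H_-\varphi(x_0)>0$ at the single point $x_0$, relying on continuity of $H_-$ only in the final limit; it also avoids the paper's explicit bump construction. One small remark: since $\tilde\varphi_\e=\varphi$ on $\W_+\cup\G$ and the touch of $\varphi$ on $u$ is strict, your cases $y_\e\in\W_+$ and $y_\e\in\G$ actually force $M_\e=0$ and $y_\e=x_0$, so in the $\G$ case the contradiction is immediate without passing to the limit; only the case $y_\e\in\W_-$ genuinely requires $\e\to0$. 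This does not affect correctness, but it shortens the argument.
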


\begin{proof}
Let $\varphi \in C^2(B_r(x_0))$ touching $u$ from above at $x_0 \in \W$. The verification of the sub-solution condition reduces to check the case $x_0\in \Gamma$. Assume by contradiction that $H_-\varphi>0$ over $B_{\r_1}(x_0)\cap (\W_-\cup\G)$ for some small $\r_1 \in (0,r)$.

Let
\[
\psi(x) := \varphi(x) + \e\eta(|x'-x_0|/\r_1)x_n(x_n+\r_1/2)
\]
where $\eta \in C^\8_0((-1/2,1/2))$ is even and non-negative, with $\eta=1$ in $(-1/4,1/4)$. The value of $\e>0$ is chosen sufficiently small such that $H_-\psi>0$ over $B_{\r_1}(x_0)\cap (\W_-\cup\G)$.

By construction $\psi \geq \varphi$ in $\p (B_{\r_1}(x_0) \cap \W_-)=(\p B_{\r_1}(x_0) \cap \{x_n<0\})\cup (B_{\r_1}(x_0) \cap \{x_n=0\})$, and $\psi(x_0) = \varphi(x_0)$. So, either $\psi$ continues touching $u$ form above at $x_0$, or there is some $c>0$ such that $\psi+c$ touches $u$ from above at some point $x^* \in B_{\r_1}(x_0) \cap \W_-$. However, this last alternative would contradict the main hypothesis on $u$, because $\psi+c$ is a test function for which $H_-(\psi+c)(x^*) \geq H_-\psi(x^*) > 0$ given that $x^* \in B_{\r_1}(x_0)\cap \W_-$.

Hence we must have that $\psi$ continues touching $u$ form above at $x_0$. We construct in this way the following test function over $B_{\r_1/4}(x_0)$
\[
\phi(x) := \begin{cases}
\psi(x) \text{ if } x \in B_{\r_1/4}(x_0)\cap\{x_n\leq 0\},\\
\varphi(x) \text{ if } x \in B_{\r_1/4}(x_0)\cap\{x_n > 0\}.
\end{cases} 
\]
Given that
\[
\p_n \phi_-(x_0) = \p_n \varphi(x_0) + \e\r_1/2 > \p_n \varphi(x_0) = \p_n \phi_+(x_0),
\]
we must have from the main hypothesis on $u$ that $H_-\phi_+(x_0) \leq 0$. However this goes against our initial assumption given that $H_-\phi_+(x_0)=H_-\varphi(x_0) >0$.
\end{proof}

The previous lemmas also have analogues for super-solutions with similar proofs. As a consequence we get the proof of Theorem \ref{thm:strong} in the case of a flat interface. The general scenario with $C^2$ interfaces follows by a standard change of variables argument around each point of the interface where the equation is tested. The $C^2$ regularity of the change of variables suffices to get operators with the same hypothesis as in the flat case.

\section{Comparison Principle}
\label{sec:comparison}

In this section we consider the open and bounded sets $\W\ss\R^n$, $\W_\pm = \W\cap\{\pm x_n >0 \}$, and the \underline{flat} interface $\Gamma=\W\cap\{x_n=0\}$. These are the hypotheses on the operators with respect to some fixed uniform ellipticity parameters $[\l,\L]\ss(0,\8)$:
\begin{itemize}
    \item[\namedlabel{hyp:hm}{($H_-$)}] Let $H_-\in C^{0,1}_{loc}(\R^n\times\R\times(\W_-\cup\Gamma))$ be a first-order quasi-convex operator with bounded sub-level sets.
    \item[\namedlabel{hyp:hp}{($H_+$)}] Let $H_+\in C(\R^{n\times n}_{\rm sym}\times\R^n\times\R\times(\W_+\cup\Gamma))$ be degenerate elliptic of the form
    \[
    H_+(M,p,z,x) = F(M,p,z)+f(z,x),
    \]
    where $F \in C(\R^{n\times n}_{\rm sym}\times\R^n\times \R)$ is translation invariant and uniformly elliptic with respect to the interval $[\l,\L]$; and $f \in C(\R\times(\W_+\cup\Gamma))$ is increasing in the $z$ variable.
\end{itemize}

Our hypotheses on the operators include a large class of examples, in particular we can treat the Poisson and eikonal equations which appear in the introduction. However, our method of proof can not say much about diffusions that may depend on the position (i.e. $H_+(M,x) = -\tr(A(x)M)$), not even in the case of smooth coefficients.

Notice as well that the hypotheses on each operator guarantee that each one separately enjoys a comparison principle \cite{MR1118699,MR1351007}.

Finally, we consider the comparison principle for two equations separated by some gap $\eta\geq 0$. For $u,-v \in C(\overline\W)\cap C^{0,1}(\W_-\cup\Gamma)$, we have the following assumptions:
\begin{itemize}
    \item[\namedlabel{hyp:sub}{(Sub)}] The function $u$ is a viscosity sub-solution of
    \[
    \begin{cases}
    H_-u + \eta \leq 0 \text{ in } \W_-\cup\Gamma,\\
    H_+u + \eta \leq 0 \text{ in } \W_+.
    \end{cases}
    \]
    \item[\namedlabel{hyp:sup}{(Sup)}] The function $v$ is a viscosity super-solution of
    \[
    \begin{cases}
    H_-v \geq 0 \text{ in } \W_-\cup\Gamma,\\
    H_+ v \geq 0 \text{ in } \W_+.
    \end{cases}
    \]
\end{itemize}

\begin{theorem}\label{thm:comp}
Let $\W\ss\R^n$ open and bounded, $\W_\pm = \W\cap\{\pm x_n >0 \}$, and $\Gamma=\W\cap\{x_n=0\}$. Let $H_\pm$ be degenerate elliptic operators that satisfy \ref{hyp:hm} and \ref{hyp:hp} respectively. Let $u,v \in C(\overline\W)\cap C^{0,1}(\W_-)$ satisfy \ref{hyp:sub} and \ref{hyp:sup} respectively with $\eta>0$. Then
\[
u\leq v \text{ on } \p\W \qquad\Rightarrow\qquad u\leq v \text{ in } \W.
\]
\end{theorem}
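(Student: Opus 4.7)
The plan is to argue by contradiction. I would first localize the potential failure of the comparison to a single contact point on the flat interface $\Gamma$, then regularize tangentially to gain quantitative boundary information, use the half-space boundary regularity results of Section \ref{sec:bdry_reg} to extract one-sided normal derivatives, and finally conclude by the quasi-convex dichotomy that already drives the one-dimensional example of Section \ref{sec:com_one_d}. Starting point: suppose $M:=\max_{\overline\W}(u-v)>0$; since $u\leq v$ on $\p\W$ the maximum is attained inside $\W$, and by the classical comparison principles for each operator in isolation (first-order \cite{MR1118699} and uniformly elliptic \cite{MR1351007}) together with the $\eta$-gap, the max cannot be attained in either of the open sets $\W_\pm$. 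So I would locate it at some $x_0\in\Gamma$.

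Next, I would convolve tangentially,
\[
u^\e(x):=\sup_{y'\in B^{n-1}_r}\bigl\{u(x+(y',0))-\tfrac{|y'|^2}{2\e}\bigr\},\qquad v^\e(x):=\inf_{y'\in B^{n-1}_r}\bigl\{v(x+(y',0))+\tfrac{|y'|^2}{2\e}\bigr\}.
\]
The Lipschitz $x$-dependence of $H_-$ and the continuity of $f(z,x)$ in $H_+$ allow the standard sup/inf-convolution machinery to pass through and ensure that $u^\e$ is still a sub-solution and $v^\e$ a super-solution of the original problem with the gap shrunk to, say, $\eta/2$. In addition, $u^\e$ is semi-convex and $v^\e$ semi-concave in $x'$. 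The max of $u^\e-v^\e$ is still positive and attained at some $x_\e\in\Gamma$. Along $\Gamma$ near $x_\e$, the inequality $u^\e-v^\e\leq (u^\e-v^\e)(x_\e)$ combined with the semi-concavity of $v^\e$ gives a paraboloidal upper envelope for $u^\e$ along $\Gamma$, which combined with its semi-convexity traps it between two paraboloids at $x_\e$ along $\Gamma$; a symmetric statement holds for $v^\e$, and in particular the tangential gradients exist and agree: $p':=D'u^\e(x_\e)=D'v^\e(x_\e)$.

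This paraboloidal trapping is the input required by the boundary regularity estimates of Section \ref{sec:bdry_reg}: Lemma \ref{lem:44} and Corollary \ref{cor1} on the $\W_+$ side produce normal derivatives $q_+:=\p_n u^\e|_{\W_+}(x_\e)$ and $\tilde q_+:=\p_n v^\e|_{\W_+}(x_\e)$, while Lemmas \ref{lem:H_cons} and \ref{lem:hopf} on the $\W_-$ side produce $q_-:=\p_n u^\e|_{\W_-}(x_\e)$ and $\tilde q_-:=\p_n v^\e|_{\W_-}(x_\e)$. The maximum condition at $x_\e$ then enforces $q_-\geq\tilde q_-$ and $q_+\leq\tilde q_+$. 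With these derivatives in hand I would transcribe the dichotomy of Section \ref{sec:com_one_d} by studying the quasi-convex slice $g(q):=H_-((p',q),v^\e(x_\e),x_\e)$. The monotonicity of $H_-$ in $z$ together with $u^\e(x_\e)>v^\e(x_\e)$ allows the sub-solution inequality for $u^\e$ to be transported onto $g$, placing $q_-$ (and also $q_+$ whenever the kink of $u^\e$ is concave) in $\{g\leq -\eta/2\}$; the super-solution condition symmetrically places $\tilde q_-$ (and $\tilde q_+$ whenever the kink of $v^\e$ is convex) in $\{g\geq 0\}$. A short case split on the orientations of the two kinks, using the ordering $q_-\geq\tilde q_-$ and $q_+\leq\tilde q_+$, always delivers a common point in both disjoint sets, contradicting the $\eta/2$ separation.

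The hard part will be the extraction of the one-sided normal derivatives from mere paraboloidal trapping along $\Gamma$, particularly on the first-order side where classical Hopf lemmas do not apply directly; this is the content of Lemmas \ref{lem:H_cons} and \ref{lem:hopf}, and it is where the flatness of $\Gamma$ is crucial, since the underlying regularity estimates (developed after Caffarelli \cite{MR787227,Gilbarg-Trudinger2001}) are formulated in half-space geometry. Everything else is either a reduction to this one-dimensional picture or a routine manipulation of quasi-convex sub-level sets.
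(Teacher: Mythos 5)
Your proposal tracks the paper's high-level strategy (contradiction at a contact point on $\Gamma$, tangential sup/inf-convolutions for semi-convexity/concavity along the interface, half-space boundary regularity, a one-dimensional-style dichotomy), but there are two genuine gaps, one of which reflects a misreading of the boundary lemmas you cite.

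First, you assert that Lemmas~\ref{lem:H_cons} and~\ref{lem:hopf} ``produce'' one-sided normal derivatives $q_-=\p_n u^\e|_{\W_-}(x_\e)$ and $\tilde q_-=\p_n v^\e|_{\W_-}(x_\e)$. They do not, and in general these derivatives need not exist: eikonal-type sub/super-solutions are only Lipschitz and may have genuine kinks at $\Gamma$. Lemma~\ref{lem:H_cons} says only that if a sub-solution is dominated by a paraboloid on $\{x_n=0\}$, then some admissible slope $p_n$ with $H(p',p_n)\leq 0$ exists---a statement about the sub-level set of the Hamiltonian, not a differentiability statement for $u^\e$. Lemma~\ref{lem:hopf} is an upper bound for a Hopf--Lax envelope, again not a derivative of the solution. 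Lemma~\ref{main:lem} in the paper avoids this entirely: on the $\W_-$ side it never differentiates the solutions, but instead compares $u$ and $v$ against explicit one-homogeneous barriers $\phi^\pm_t$ and tilted paraboloids $\varphi^\pm_t$ (Steps~1 and~4), and obtains a normal derivative only for $v_+$ and $u_+$ on the elliptic side via Corollary~\ref{cor1}. Your final ``short case split on the orientations of the two kinks'' is therefore not available as stated, and it is precisely the place where the paper's argument has to do real work.

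Second, your proposal never performs the lifting step, and it is essential. Corollary~\ref{cor1} (equivalently, hypothesis~\ref{hyp:ue2} in Lemma~\ref{main:lem}) requires that the functions on the elliptic side satisfy the two-sided Pucci inequalities. But $u^\e$ is only a sub-solution of $H_+u\leq0$ in $\W_+$, which by uniform ellipticity yields only the $\cM^-$ inequality (and dually $v_\e$ gives only $\cM^+$). The paper replaces $u^\e$ and $v_\e+c$ over $B_\r(x_0)\cap\{x_n>0\}$ by the exact solutions $\bar u,\bar v$ of the corresponding Dirichlet problems (as in Lemma~\ref{lem:lift}); those do satisfy both Pucci bounds, and the comparison principle ensures that $\bar u$ still touches $\bar v$ from below at $x_0$. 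Without this replacement, the boundary regularity result you invoke does not apply, and the extraction of $\p_n u_+(x_0)$ and $\p_n v_+(x_0)$ breaks down. (The freezing of $H_-$ to a translation-invariant $H_0$ near $x_0$, which you do implicitly via $g(q)=H_-((p',q),v^\e(x_\e),x_\e)$, is fine modulo the usual continuity argument and a slight shrinking of the gap.)
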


Under some additional conditions it is possible to remove the gap between the equations. For instance:

\begin{itemize}
    \item[\namedlabel{hyp:monz}{(Mz)}] For some $\l>0$,
    \[
    h\geq 0 \qquad \Rightarrow\qquad H_\pm(p,z+h,x) \geq H_\pm(p,z,x) + \l h.
    \]
    \item[\namedlabel{hyp:monp}{(Mp)}] For some $\l>0$, and $e\in \p B_1$
    \[
    h\geq 0 \qquad \Rightarrow\qquad H_\pm(p+he,z,x)\geq H_\pm(p,z,x)+ \l h.
    \]
    \item[\namedlabel{hyp:con1}{(C)}] $H_\pm$ are convex operators independent of $z$, and there exist $\b\in C^2(\overline\W)$ and $\bar\eta>0$ such that
    \[
    \begin{cases}
        H_-\b + \bar \eta \leq 0 \text{ in } \W_-\cup \G,\\
        H_+\b + \bar\eta \leq 0 \text{ in } \W_+.
    \end{cases}
    \]
\end{itemize}

\begin{remark}\label{rmk:other}
    The first case is usually applied to operators of the form $H_-(p,z,x)=\l z + H(p,x)$ appearing in optimal control problem with geometric discount. The second case can be applied to show a comparison principle in time evolving problems, being $e$ the time direction. Finally, the last case applies to our model problem with the eikonal operator $H_-(p)=|p|-1$ and $H_+(M)=\tfrac{1}{2}(-\tr M)-1$ by taking $\b = 0$ and $\bar \eta=1$.
\end{remark}

\begin{corollary}\label{cor:comp}
Let $\W\ss\R^n$ open and bounded, $\W_\pm = \W\cap\{\pm x_n >0 \}$, and $\Gamma=\W\cap\{x_n=0\}$. Let $H_\pm$ be degenerate elliptic operators that satisfy \ref{hyp:hm} and \ref{hyp:hp} respectively and at least of the hypotheses \ref{hyp:monz}, \ref{hyp:monp}, or \ref{hyp:con1}. Let $u,v \in C(\overline\W)\cap C^{0,1}(\W_-)$ satisfy \ref{hyp:sub} and \ref{hyp:sup} respectively with $\eta=0$. Then
\[
u\leq v \text{ on } \p\W \qquad\Rightarrow\qquad u\leq v \text{ in } \W.
\]
\end{corollary}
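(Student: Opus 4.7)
The plan is to reduce Corollary~\ref{cor:comp} to Theorem~\ref{thm:comp} by a perturbation argument. In each of the three cases I would construct a family $\{u_\d\}_{\d>0}$ satisfying \ref{hyp:sub} with some strictly positive gap $\eta_\d>0$, obeying $u_\d\leq v$ on $\p\W$, and converging uniformly to $u$ as $\d\to 0$. Theorem~\ref{thm:comp} then gives $u_\d\leq v$ in $\W$, and passing to the limit yields $u\leq v$.

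Under \ref{hyp:monz} I would take the downward shift $u_\d:=u-\d$. If a $C^2$ test function $\varphi$ touches $u_\d$ from above at $x_0$, then $\varphi+\d$ touches $u$ from above at $x_0$, so $H_\pm(D\varphi,u,x_0)\leq 0$, and \ref{hyp:monz} yields
\[
H_\pm(D\varphi,u_\d,x_0)\leq H_\pm(D\varphi,u,x_0)-\l\d\leq -\l\d.
\]
The boundary ordering $u_\d\leq u\leq v$ is automatic. Under \ref{hyp:monp} I would use the affine tilt $u_\d:=u-\d\ell$, where $\ell\in C^\8(\overline\W)$ satisfies $D\ell\equiv e$ and $\ell\geq 0$ on $\overline\W$ (translate if needed). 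A test function $\varphi$ touching $u_\d$ from above at $x_0$ produces $\varphi+\d\ell$ touching $u$ from above at $x_0$, hence $H_\pm(D\varphi+\d e,u,x_0)\leq 0$; \ref{hyp:monp} then gives $H_\pm(D\varphi,u,x_0)\leq -\l\d$, and the monotonicity in $z$ built into the definition of degenerate ellipticity absorbs the replacement of $u$ by $u_\d\leq u$. Again $u_\d\leq u\leq v$ on $\p\W$.

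The convex case \ref{hyp:con1} is the delicate one. With $H_\pm$ convex and independent of $z$, and $\b\in C^2(\overline\W)$ a classical strict sub-solution with gap $\bar\eta>0$, I would form the convex combination $w_\theta:=(1-\theta)u+\theta\b$ for $\theta\in(0,1)$. If $\varphi$ is a $C^2$ test function touching $w_\theta$ from above at $x_0$, then $\psi:=(\varphi-\theta\b)/(1-\theta)$ is $C^2$ (because $\b$ is) and touches $u$ from above at $x_0$, so $H_\pm\psi(x_0)\leq 0$. Writing $\varphi=(1-\theta)\psi+\theta\b$ and applying the convexity of $H_\pm$ gives
\[
H_\pm\varphi(x_0)\leq (1-\theta)H_\pm\psi(x_0)+\theta H_\pm\b(x_0)\leq -\theta\bar\eta.
\]
To match the boundary data I would subtract $C_\theta:=\theta\,\max_{\p\W}(\b-v)_+$, using $z$-independence to preserve the sub-solution property, and apply Theorem~\ref{thm:comp} to $w_\theta-C_\theta$ and $v$; since $w_\theta\to u$ uniformly and $C_\theta\to 0$ as $\theta\to 0^+$, the conclusion $u\leq v$ follows.

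The main obstacle I anticipate is the convex case, specifically verifying that the convex combination preserves the viscosity sub-solution property with the improved gap $\theta\bar\eta$. The algebraic identity $\varphi=(1-\theta)\psi+\theta\b$, available precisely because $\b$ is a \emph{classical} strict sub-solution, is what circumvents the usual obstruction that viscosity test functions cannot be split additively; the other two cases reduce to direct applications of the monotonicity hypotheses.
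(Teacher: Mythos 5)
Your proposal is correct and follows the same approximation strategy the paper uses: in each case you build a family of sub-solutions with a strictly positive gap, invoke Theorem~\ref{thm:comp}, and pass to the limit. Indeed the paper only spells out case~\ref{hyp:con1} (taking $u_t=(1-t)u+t\b$ after translating $\b$ downward, which is the same convex combination you use, with your additive constant $C_\theta$ playing the role of the translation), and merely asserts the viscosity-level verification that $u_t$ has gap $t\bar\eta$; your identity $\varphi=(1-\theta)\psi+\theta\b$ and the observation that this splitting is legal precisely because $\b$ is a classical $C^2$ strict sub-solution is the correct justification of that step, and your treatments of \ref{hyp:monz} and \ref{hyp:monp} fill in the cases the paper leaves to the reader.
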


In any of the three cases the idea is to approximate the sub-solution with a limiting sequence that satisfies the hypothesis of Theorem \ref{thm:comp} with a positive gap. Let us illustrate the last one, hypothesis \ref{hyp:con1}, pertinent to the optimal control problem discussed in the introduction.

Given that $H_\pm$ are independent of $z$ we may translate the barrier and assume that $\b\leq 0$ on $\p\W$. Assume now that $u,v \in C(\overline\W)\cap C^{0,1}(\W_-)$ satisfy \ref{hyp:hm}, \ref{hyp:hp} with $\eta=0$, and $u\leq v$ on $\p\W$. Then $u_t = (1-t)u+t\b$ and $v_t=v$ satisfy \ref{hyp:hm} and \ref{hyp:hp} with $\eta=t\bar\eta$ ($t\in(0,1)$), and still $u_t\leq v$ on $\p\W$. Hence $u_t\leq v_t$ in $\W$ and the desired conclusion for $u$ and $v$ gets recovered in the limit as $t\to 0^+$.

\begin{corollary}
    Let $\W\ss\R^n$ open and bounded, $\W_\pm = \W\cap\{\pm x_n >0 \}$, and $\Gamma=\W\cap\{x_n=0\}$. Assume that $\W_\pm$ have uniformly Lipschitz regular boundaries. Let $H_\pm$ be degenerate elliptic operators that satisfy \ref{hyp:hm} and \ref{hyp:hp} respectively and at least of the hypotheses \ref{hyp:monz}, \ref{hyp:monp}, or \ref{hyp:con1}.
    
    Given $g \in C(\overline \W)$ being a sub-solution of \eqref{eq:main3}, there exists a \underline{unique} viscosity solution of \eqref{eq:main3} in $\W$ taking the boundary value $g$ on $\p\W$.
\end{corollary}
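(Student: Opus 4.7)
The plan is to assemble the result from the two major theorems developed earlier in the paper, treating existence and uniqueness separately. \emph{Existence} is handed to us directly by Theorem~\ref{thm:existence}: its hypotheses ($\W_\pm$ with uniformly Lipschitz regular boundaries, $H_-$ first-order degenerate elliptic with bounded sub-level sets, $H_+$ uniformly elliptic second-order) are all contained in the present assumptions \ref{hyp:hm} and \ref{hyp:hp}. Moreover, $g\in C(\overline\W)$ is assumed to be a sub-solution, matching exactly the input required by Theorem~\ref{thm:existence}. Thus we obtain a viscosity solution $u\in C(\overline\W)$ of \eqref{eq:main3} with $u=g$ on $\p\W$.

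For \emph{uniqueness}, suppose $u_1,u_2\in C(\overline\W)$ are two viscosity solutions with $u_1=u_2=g$ on $\p\W$. Before invoking the comparison principle I would first upgrade their regularity on the first-order side in order to meet the hypotheses of Corollary~\ref{cor:comp}. Each $u_i$ is in particular a bounded viscosity sub-solution of $H_-u_i\leq 0$ in $\W_-$; since $\W_-$ has uniformly Lipschitz regular boundary and $H_-$ has bounded sub-level sets by \ref{hyp:hm}, Lemma~\ref{lem:lips1} yields $u_i\in C^{0,1}(\W_-)$. Hence $u_1,u_2\in C(\overline\W)\cap C^{0,1}(\W_-)$, which is exactly the regularity demanded by Corollary~\ref{cor:comp}.

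I would then invoke Corollary~\ref{cor:comp} twice. First, treating $u_1$ as a sub-solution and $u_2$ as a super-solution of \eqref{eq:main3} (with gap $\eta=0$), and using $u_1\leq u_2$ on $\p\W$, the corollary gives $u_1\leq u_2$ in $\W$. Swapping the roles of $u_1$ and $u_2$ yields the reverse inequality, so $u_1=u_2$.

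There is no real obstacle at this stage: all the serious analytic work — the Perron-type construction combined with global regularity of sub-solutions for Theorem~\ref{thm:existence}, and the gap-closing machinery under \ref{hyp:monz}, \ref{hyp:monp}, or \ref{hyp:con1} for Corollary~\ref{cor:comp} — has already been carried out. The only subtlety worth flagging in the write-up is verifying that the \emph{a priori} requirement $u_i\in C^{0,1}(\W_-)$ of the comparison principle is automatic for viscosity solutions of \eqref{eq:main3}, which is precisely why Lemma~\ref{lem:lips1} is pulled in as a brief preparatory step.
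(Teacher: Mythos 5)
Your proof takes the same natural route the paper intends: existence from Theorem~\ref{thm:existence} and uniqueness from Corollary~\ref{cor:comp}, with Lemma~\ref{lem:lips1} supplying the required $C^{0,1}(\W_-)$ regularity. The existence half and the regularity upgrade are correct as stated.

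There is, however, a gap in the uniqueness step. The hypotheses \ref{hyp:sub} and \ref{hyp:sup} that Corollary~\ref{cor:comp} demands are formulated for the \emph{strong} problem \eqref{eq:main4}: they require $H_-u\leq 0$ (resp.\ $H_-v\geq 0$) in the viscosity sense on all of $\W_-\cup\Gamma$, whereas a sub- or super-solution of the relaxed problem \eqref{eq:main3} is only known to satisfy the weaker condition $\min(H_-u,H_+u)\leq 0$ (resp.\ $\max(H_-v,H_+v)\geq 0$) on $\Gamma$. You cannot feed a solution of \eqref{eq:main3} into Corollary~\ref{cor:comp} without first upgrading the equation it satisfies on the interface. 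The missing bridge is Theorem~\ref{thm:strong}: since $\Gamma=\W\cap\{x_n=0\}$ is a $C^2$ (indeed flat) interface and $H_+$ is uniformly elliptic (the translation-invariant part $F$ in \ref{hyp:hp} furnishes the coercivity used in Lemma~\ref{lemma1}), any sub-solution (super-solution) of \eqref{eq:main3} is automatically a sub-solution (super-solution) of \eqref{eq:main4}, hence satisfies \ref{hyp:sub} (resp.\ \ref{hyp:sup}) with $\eta=0$. Inserting this invocation of Theorem~\ref{thm:strong} immediately before you apply Corollary~\ref{cor:comp} closes the gap; the rest of the argument is sound.
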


The proof of Theorem \ref{thm:comp} goes by contradiction, here is a summary of the steps involved: Assume that $u\leq v$ over the boundary $\p\W$, nevertheless $\{u>v\}$ is non-empty. We use the hypothesis on the operators and the regularity of the solutions to replace the solutions by some inf/sup regularizations (along directions parallel to the interface) preserving all the other hypothesis and gaining semi-convexity along the interface.

By translating $v$ upwards (denoted the same) we can then assume that $v$ touches $u$ from above at some point $x_0\in \G$ (being the other cases clearly not possible by the classical comparison principles). At this point we also replace $u$ and $v$ over $\W_+$ with exact solutions of the uniformly elliptic equation. Finally, we observe that over $\G$ the solutions are trapped between two paraboloids, from this control and the equations on both sides, we can extract further regularity that finally allow us to reproduce the argument in Section \ref{sec:com_one_d} for the one dimensional case. The boundary estimates are discussed in Section \ref{sec:bdry_reg}.

\subsection{Inf/sup convolutions}\label{sec:inf_sup}

Here is a quick review of the inf/sup convolutions and its properties. The main difference is that we only perform the regularization in directions parallel to $\{x_n=0\}=\R^{n-1}$.

Given $\W\ss\R^n$ and $r>0$ we denote
\[
\W^r := \{x\in \W\ | \ \overline{B_r(x)} \ss \W\}.
\]

\begin{definition}\label{convol}
Let $\W\ss\R^n$, $u \in C(\W)\cap L^\8(\W)$, $\e\in(0,1)$, $M=\|u\|_{L^\8(\W)}$, and $r= 2\sqrt{M\e}$.

We define sup-convolution $u^\e:\W^r \to \R$ such that
\begin{align*}
u^\e(x) &:= \sup\{ u(y',x_n)-\tfrac{1}{2\e}|y'-x'|^2\ | \ (y',x_n)\in \W\}.
\end{align*}

We define the inf-convolution $u_\e:\W^r \to \R$ such that
\begin{align*}
u_\e(x) &:= \inf\{ u(y',x_n)+\tfrac{1}{2\e}|y'-x'|^2\ | \ (y',x_n)\in \W\}.
\end{align*}
\end{definition}

By choosing $x\in \W^r$, with $r= 2\sqrt{M\e}$ and $M=\|u\|_{L^\8(\W)}$, we guarantee that the supremum and infimum above are actually maximum and minimum attained at some point over the disc $\{(y',x_n) \ | \ |y'-x'| \leq r\}$.

The proof of the following lemma is a standard adaptation of the ideas in \cite[Chapter 5]{MR1351007}. As usual there is a analog version for super-solutions and their corresponding inf-convolution.

\begin{lemma}
Let $\W\ss\R^n$, $u \in C(\W)\cap L^\8(\W)$, $\e\in(0,1)$, $M=\|u\|_{L^\8(\W)}$, and $r= 2\sqrt{M\e}$. In the last two properties we will also denote $\W^r_\pm=\W^r\cap\{\pm x_n>0\}$, and $\G^r = \W^r\cap \{x_n=0\}$. The following properties hold:
\begin{itemize}
\item \textbf{Semi-convexity:} For each $h \in \R$ such that $\W^r\cap \{x_n=h\}\neq \emptyset$, the function $x'\in \{x'\in \R^{n-1}\ | \ (x',h)\in \W^r\} \mapsto u^\e(x',h) + \tfrac{1}{2\e}|x'|^2$ is convex.

\item \textbf{Approximation:} The functions $u^\e$ decrease to $u$ locally uniformly as $\e\searrow 0$.

\item \textbf{Lipschitz solutions for first-order Lipschitz equations:} Given $H_-\in C^{0,1}_{loc}(\R^n\times \R\times \W_-\cap \G)$ and $u \in C^{0,1}(\W_-\cap \G)$ there exists a constant $C$ such that if $u$ is a viscosity sub-solution of the problem
\[
H_-u \leq 0 \text{ in } \W_-\cap \G
\]
we also get that $u^\e$ is a sub-solution of
\[
H_-u - C\e \leq 0 \text{ in } \W_-^r\cap \G^r
\]

\item \textbf{Solutions for translation invariant equations:} Let $H_+\in C(\R^{n\times n}_{sym}\times\R^n\times \R\times \W_+)$ such that
\[
H_+(M,p,z,x) = F(M,p,z)+f(z,x)
\]
with $F\in C(\R^{n\times n}_{sym}\times\R^n\times \R)$ translation invariant and $f\in C(\R\times \W_+)$, both degenerate elliptic. Given $u \in C(\W_+\cap \G)$, there exists a modulus of continuity $\w$ such that if $u$ is a viscosity sub-solution of the problem
\[
H_+u \leq 0 \text{ in } \W_+,
\]
we also get that $u^\e$ is a sub-solution of
\[
H_+u - \w(\e) \leq 0 \text{ in } \W_+^r.
\]
\end{itemize} 
  
\end{lemma}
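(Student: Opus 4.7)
The proof follows the classical template for sup-convolutions (cf.\ \cite[Chapter 5]{MR1351007}), with the sole modification that the regularization acts only on the tangential variable $x'$. For every $x\in \W^r$ with $r=2\sqrt{M\e}$ and $M=\|u\|_{L^\8(\W)}$, the boundedness of $u$ forces the supremum in Definition \ref{convol} to be attained at a maximizer $y_0=(y_0',x_n)$ with $|y_0'-x'|\leq r$; this basic estimate controls all the errors appearing below.

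\textbf{Semi-convexity and approximation.} The semi-convexity is immediate from the identity
\[
u^\e(x)+\tfrac{1}{2\e}|x'|^2 \;=\; \sup_{y'}\Bigl\{u(y',x_n)-\tfrac{1}{2\e}|y'|^2+\tfrac{1}{\e}\,x'\cdot y'\Bigr\},
\]
which exhibits the left-hand side as a supremum of affine functions of $x'$ for each fixed $x_n$. Monotonicity $u^{\e_1}\leq u^{\e_2}$ for $\e_1\leq\e_2$ follows at once from the larger penalty, and $u^\e\geq u$ by taking $y'=x'$; combined with the continuity of $u$ and the diameter bound $|y_0'-x'|\leq r\to 0$, these give the locally uniform decrease $u^\e\searrow u$.

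\textbf{Transport of test functions.} For both sub-solution statements the mechanism is the same: if $\varphi\in C^2$ touches $u^\e$ from above at $x_0\in \W^r$ and $y_0'$ is a corresponding maximizer, the tangentially shifted function
\[
\tilde\varphi(y):=\varphi\bigl(y'-(y_0'-x_0'),\,y_n\bigr)+\tfrac{1}{2\e}|y_0'-x_0'|^2
\]
touches $u$ from above at $y_0:=(y_0',(x_0)_n)$, with matching derivatives $D\tilde\varphi(y_0)=D\varphi(x_0)$ and $D^2\tilde\varphi(y_0)=D^2\varphi(x_0)$, and $\tilde\varphi(y_0)=u(y_0)\geq u^\e(x_0)$. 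In the first-order case, the Lipschitz assumption on $u$ refines the maximizer bound to $|y_0-x_0|\leq 2[u]_{C^{0,1}}\e$ via $\tfrac{1}{2\e}|y_0'-x_0'|^2\leq u(y_0)-u(x_0)\leq[u]_{C^{0,1}}|y_0'-x_0'|$; combining the sub-solution inequality $H_-(D\varphi(x_0),u(y_0),y_0)\leq 0$ with the $z$-monotonicity of $H_-$ (to replace $u(y_0)$ by $u^\e(x_0)\leq u(y_0)$) and the local Lipschitz continuity of $H_-$ in $x$ (to replace $y_0$ by $x_0$) yields $H_-\varphi(x_0)\leq C\e$, with $C$ depending on $[u]_{C^{0,1}}$ and the local Lipschitz norm of $H_-$. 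In the second-order case, the translation invariance of $F$ cancels the $x$-error for the leading term; the $z$-monotonicity of $H_+$ (a direct consequence of uniform ellipticity of $F$ together with the hypothesis on $f$) handles the swap $u(y_0)\mapsto u^\e(x_0)$; and uniform continuity of $f$ on compact subsets of $\R\times(\W_+\cup\Gamma)$, combined with $|y_0-x_0|\leq 2\sqrt{M\e}$, produces the required modulus $\w(\e):=\w_f(2\sqrt{M\e})$.

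\textbf{Main subtlety.} The one point requiring real care — not a genuine obstacle once noticed — is checking that $\tilde\varphi$ touches $u$ on a full neighborhood of $y_0$ contained in the relevant side $\W_-\cup\Gamma$ or $\W_+$, so that the sub-solution property of $u$ actually applies. This is precisely the reason for restricting the convolution to tangential directions: since $(y_0)_n=(x_0)_n$, the contact point remains on the correct side of $\Gamma$, and the horizontal shift $y'\mapsto y'-(y_0'-x_0')$ keeps $\tilde\varphi$ inside the admissible half-space throughout a neighborhood of $y_0$.
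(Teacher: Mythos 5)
Your proposal is correct and follows exactly the standard sup-convolution argument from \cite[Chapter 5]{MR1351007} that the paper cites without reproducing: the tangential Legendre-transform identity for semi-convexity, the maximizer diameter bound $|y_0'-x_0'|\leq 2\sqrt{M\e}$ (refined to $O(\e)$ when $u$ is Lipschitz), and the transport of test functions via the horizontal shift $\tilde\varphi$. The one observation worth keeping explicit — that the contact point $y_0$ stays at the same height $(y_0)_n=(x_0)_n$ and hence on the correct side of $\Gamma$ — is precisely why the regularization is performed only tangentially, and you identify it correctly.
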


\subsection{Regularity estimates}\label{sec:bdry_reg}

The regularity results in this section use as hypotheses the semi-convexity that results from the inf/sup-convolutions.

\subsubsection{Uniformly elliptic regime}

The following lemma is closely related to the $C^{1,\a}$ boundary regularity estimate due to Krylov \cite{MR661144}. Its proof is part of an argument due to Caffarelli \cite[Lemma 4.31]{MR787227} and \cite[proof of Theorem 9.31]{Gilbarg-Trudinger2001} using the Harnack inequality.

\begin{lemma}\label{lem:44}
Let $[\l,\L]\ss(0,\8)$, $\e\in(0,1)$, $p'\in \R^{n-1}$ with $|p'|\leq \L$, and let $u\in C(\overline{B_1\cap\{x_n> 0\}})$ satisfy 
\[
\begin{cases}
\cM^+_{\l,\L}(D^2u) + \L|Du| + \L u_+ + \L \geq 0 \text{ in } B_1\cap\{x_n>0\},\\
\cM^-_{\l,\L}(D^2u) - \L|Du| - \L u_- - \L \leq 0 \text{ in } B_1\cap\{x_n>0\},\\
u\geq -\tfrac{1}{2\e}|x'|^2+ p'\cdot x' \text{ on } B_1\cap\{x_n=0\}.
\end{cases}
\]
Then one of the following alternatives must be true:
\begin{enumerate}
\item For every $p_n \in \R$ there exists a sufficiently small radius $r\in(0,1)$ such that
\[
u\geq -\tfrac{1}{2\e}|x'|^2+ p'\cdot x' + p_n x_n \text{ in } B_r\cap \{x_n\geq 0\}.
\]
\item There exists $p=(p',p_n)\in \R^n$ such that the following holds: For every $\eta>0$ there is a radius $r\in(0,1)$ such that
\[
\begin{cases}
u - (-\frac{1}{2\e}|x'|^2+ p\cdot x) \geq -\eta x_n \text{ in } B_r\cap \{x_n\geq 0\},\\
u - (-\frac{1}{2\e}|x'|^2+ p\cdot x) \leq \eta x_n \text{ in } B_r\cap \{x_n>\eta|x|\}.
\end{cases}
\]
\end{enumerate}
\end{lemma}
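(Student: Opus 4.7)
The plan is to adapt the Krylov--Caffarelli $C^{1,\a}$ boundary regularity scheme, where the paraboloid $Q(x) := -\tfrac{1}{2\e}|x'|^2 + p'\cdot x'$ plays the role of boundary data. First I would subtract it off: set $w := u - Q \in C(\overline{B_1^+})$, with $B_1^+ := B_1\cap\{x_n>0\}$. Since $|DQ|$ and $\|D^2Q\|$ are bounded by constants depending only on $\e$, $|p'|$, and $\L$, the function $w$ still satisfies Pucci inequalities of the same form on $B_1^+$, with only an additional bounded source $K = K(n,\l,\L,\e,|p'|,\|u\|_{L^\8})$ on the right-hand side, while the boundary condition becomes $w\geq 0$ on $B_1\cap\{x_n=0\}$. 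If $w(0)>0$, continuity already yields $w\geq w(0)/2$ on some $B_{r_0}$, which directly gives alternative (1): for any $p_n$, take $r\leq \min(r_0,w(0)/(2|p_n|+1))$. Hence I may assume $w(0)=0$.

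Next I would introduce the optimal lower slope
\[
p_n^* := \sup\bigl\{p_n\in\R : \forall \eta>0,\ \exists r\in(0,1),\ w(x)\geq (p_n-\eta)x_n \text{ in } B_r^+\bigr\}.
\]
If $p_n^* = +\8$, alternative (1) follows immediately: for any target $p_n$, pick $q>p_n+1$ in the defining set with $\eta=1$ to get $w\geq (q-1)x_n \geq p_n x_n$ in some $B_r^+$. Otherwise $p_n^*<\8$, and I claim alternative (2) holds with $p=(p',p_n^*)$. The lower bound $w-p_n^*x_n\geq -\eta x_n$ in some $B_r^+$ is built into the definition (apply it with slope $p_n^*-\eta/2$), so all the work is in the upper bound.

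The upper bound $w - p_n^*x_n \leq \eta x_n$ on $B_r\cap\{x_n>\eta|x|\}$ is the crux, and I would prove it by contradiction combined with a rescaling--Harnack--barrier scheme. Suppose it fails: there are $\eta_0>0$ and points $x_k\to 0$ with $x_{k,n}\geq \eta_0|x_k|$ and $w(x_k)-p_n^*x_{k,n}>\eta_0 x_{k,n}$. Setting $r_k:=|x_k|$ and $v_k(y):=r_k^{-1}\bigl(w(r_ky)-p_n^*r_ky_n\bigr)$, each $v_k$ is nonnegative on $\{y_n=0\}$, satisfies Pucci inequalities on $B_{1/r_k}^+$ with inhomogeneity scaling down to order $Kr_k\to 0$, and $v_k(y_k)\geq \eta_0^2$ at $y_k=x_k/r_k\in S^{n-1}\cap\{y_n\geq \eta_0\}$. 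Chaining interior Krylov--Safonov Harnack balls from $y_k$ to a fixed interior point such as $(0,1/2)$ yields a uniform lower bound $v_k\geq c(n,\l,\L,\eta_0)$ on a compact set $K_0\ss\{y_n\geq 1/4\}$. Comparison with a classical half-ball barrier $\varphi$ (vanishing on $\{y_n=0\}$, comparable to $y_n$ near the origin, and satisfying $\cM^+_{\l,\L}(D^2\varphi)+\L|D\varphi|\leq 0$, as in Gilbarg--Trudinger Thm.~9.31) then propagates this down to the flat boundary, giving $v_k\geq \tilde c\, y_n$ on a fixed $B_\r^+$ with $\tilde c>0$ independent of $k$. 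Unscaling gives $w(x)-p_n^*x_n\geq \tilde c\, x_n$ on $B_{\r r_k}^+$, contradicting the maximality of $p_n^*$. The main obstacle is precisely this last step: one must verify that the barrier comparison survives uniformly in $k$ despite the inhomogeneity $Kr_k$. This succeeds because the barrier yields a lower bound of order $y_n$ with a $k$-independent constant, whereas the rescaled forcing $Kr_k$ becomes negligible for large $k$ and is easily absorbed.
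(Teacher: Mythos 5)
Your proposal is correct and follows essentially the same Caffarelli--Krylov scheme as the paper: define the optimal normal slope, dispatch the infinite-slope case as alternative~(1), and otherwise argue by contradiction with a nontangential sequence, a Lipschitz rescaling, interior Harnack, and a Hopf-type boundary barrier that contradicts maximality of the slope; the forcing $Kr_k$ is negligible exactly as you say. Your $p_n^*$ coincides with the paper's $p_n = \lim_{r\to 0^+}\r(r)$, and your preliminary reduction to $w(0)=0$ corresponds to the paper's absorbing $u(0)>0$ into the case $\r(r)\to+\infty$.

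The one place where you are lighter than the paper, and where a reader would want more, is the final barrier comparison. The paper does \emph{not} compare $v_k$ directly against a half-ball barrier; instead it derives the degenerate equation satisfied by the quotient $w_k := v_k/x_n$ and builds the barrier $\phi(x) = \mu + x_n - 8\mu|x'|^2$ for \emph{that} equation, with $\mu = c_1\l/\L$ chosen small so that $\phi\leq 0$ on the lateral boundary $\p B_{1/2}^{n-1}\times(0,\mu]$. That choice is what removes the need for any lower bound on $v_k$ along the lateral boundary, where the Harnack chain gives no information. Your sketch asks for a barrier ``vanishing on $\{y_n=0\}$, comparable to $y_n$, and a Pucci subsolution,'' but those three conditions alone do not suffice; you must also arrange $\varphi\leq 0$ on the lateral boundary of a thin slab whose thickness is a small multiple of $\l/\L$, as the paper does. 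This is a finite amount of explicit work, not a conceptual obstruction, and citing Gilbarg--Trudinger Thm.~9.31 is apt (that argument is itself the quotient version). One more small point worth making explicit: with your choice of slope $p_n^*$ rather than $\r(2r_k)$ in the definition of $v_k$, the rescaled function is only nonnegative up to an $o(1)$ error $(p_n^*-\r(2r_k))y_n$; this is harmless for Harnack after an $o(1)$ vertical shift, but it is the reason the paper prefers $\r(2r_k)$, which makes $v_k\geq 0$ on $B_2^+$ exactly and closes the contradiction as $\r(\mu r_k)-\r(2r_k)\geq\theta>0$.
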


\begin{proof}
Let $\phi \in C(\overline{B_1\cap\{x_n>0\}})$ be defined by the boundary value problem
\[
\begin{cases}
\cM^+_{\l,\L}(D^2\phi) + \L|D\phi| + \L \phi_+ = -\L \text{ in } B_1\cap \{x_n>0\},\\
\phi=-\tfrac{1}{2\e}|x'|^2+p'\cdot x' \text{ on } B_1\cap\{x_n=0\},\\
\phi=\min_{\p B_1\cap\{x_n>0\}} u \text{ on } \p B_1\cap\{x_n>0\}.
\end{cases}
\]
Global estimates for convex operators imply that this problem has a classical solution with $\phi \in C^1(B_{1/2}\cap\{x_n\geq 0\})$, \cite{MR661144}.

This barrier shows that for $r\in (0,1/2)$ the following construction is well defined as a finite number
\[
\r(r) := \sup\{\r\in \R \ | \ u(x)\geq -\tfrac{1}{2\e}|x'|^2 + p'\cdot x' + \r x_n \text{ for all } x\in B_r\cap \{x_n>0\}\}.
\]
The first alternative in the theorem holds if $\lim_{r\to0^+}\r(r) = +\8$. So let us assume that $p_n := \lim_{r\to0^+}\r(r) < +\8$.

By construction we automatically have the lower bound and all we need to show is the upper bound in non-tangential domains of the form $B_r\cap \{x_n>\eta|x|\}$. Assume by contradiction that there is some $\eta\in (0,1/2)$ and a sequence $\{y_k\}$ converging to the origin with $(y_k)_n> \eta|y_k|$ such that
\begin{align}\label{eq:har_hyp}
u(y_k) > -\tfrac{1}{2\e}|y_k'|^2 + p\cdot y_k+\eta(y_k)_n.
\end{align}

Let $r_k:= |y_k| \in (0,1/4)$ and consider the following construction defined in $B_2\cap \{x_n>0\}$
\[
v_k(x) := r_k^{-1}u_k(r_kx) - (-\tfrac{r_k}{2\e} |x'|^2 + p'\cdot x'+\r(2r_k)x_n).
\]
In other words, $v_k$ is a Lipschitz rescaling of the non-negative difference between $u_k$ and the approximating polynomial over $B_{2r_k}\cap \{x_n>0\}$
\[
u(x) = -\tfrac{1}{2\e} |x'|^2 + p'\cdot x'+\r(2r_k)x_n + r_kv_k(r_k^{-1}x).
\]

We will use that the function $v_k$ also satisfies in the viscosity sense
\[
\begin{cases}
\cM^+_{\l,\L}(D^2v) + \L r_k|Dv| + \L r_k^2 v_+ + Cr_k \geq 0 \text{ in } B_2\cap\{x_n>0\},\\
\cM^-_{\l,\L}(D^2v) - \L r_k|Dv| - \L r_k^2 v_- - Cr_k \leq 0 \text{ in } B_2\cap\{x_n>0\},
\end{cases}
\]
for some constant $C$ independent of $k$. The verification of this equation in the viscosity sense is straightforward from the equation for $u$.

Given \eqref{eq:har_hyp}, we get that for $v_k$ the following must hold
\[
\sup_K v_k \geq \eta^2,
\]
where $K:= \p B_1\cap \{x_n\geq \eta|x|\}$ is a compact subset of $B_2\cap\{x_n>0\}$.

Let $\m:=c_1\l/\L$ for some $c_1>0$ small to be fixed as a universal constant. Thanks to the Harnack inequality we get that
\[
\inf_{B_{1/2}^{n-1}\times\{x_n=\m\}} v_k \geq c-Cr_k,
\]
where the constant $c=c(\m)>0$ depends on $\mu$. By taking $k$ sufficiently large we get that $\inf_{B_{1/2}^{n-1}\times\{x_n=\m\}} v_k \geq c_0>0$, for $c_0=c(\m)/2$ independent of $k$. The goal is to propagate this lower bound towards $\{x_n=0\}$ with the use of a lower barrier for $w_k(x) := v_k(x)/x_n$. This quotient satisfies the following degenerate elliptic equation in the viscosity sense\footnote{For $v,w\in\R^n$, we denote $v\odot w:= \tfrac{1}{2}(v\otimes w + w\otimes v)\in \R^{n\times n}_{\rm sym}$}
\[
\cM^+_{\l,\L}(x_n D^2w+2e_n\odot Dw) + \L r_k|x_nDw+we_n| + \L r_k^2x_nw + Cr_k \geq 0 \text{ in } B_1^{n-1}\times[0,1).
\]

Let
\[
\phi(x) := \mu + x_n - 8\mu|x'|^2.
\]
Then $\phi\leq 0$ on $(\p B_{1/2}^{n-1}\times(0,\mu]) \cup (B_{1/2}^{n-1}\times\{x_n=0\})$ and
\begin{align*}
\cM^+_{\l,\L}(x_nD^2\phi+2e_n\odot D\phi)(x) &= \sup_{\l I\leq A\leq \L I} \1-x_n\sum_{i,j=1}^{n} a_{ij}\p_{ij}\phi(x)-2\sum_{i=1}^n a_{in}\p_i\phi(x)\2,\\
&= \sup_{\l I\leq A\leq \L I} \116\mu\sum_{i=1}^{n-1} (a_{ii}x_n+2a_{in}x_i)-2a_{nn}\2,\\
&\leq C\m\L-2\l,\\
&\leq -\l.
\end{align*}
The last inequality can be achieved by taking $c_1= \m\L/\l$ sufficiently small.

The remaining terms can be considered lower order perturbations
\[
r_k|x_nD\phi(x)+\phi(x)e_n| + r^2_k x_n \phi(x) \leq C\mu \leq \l/(2\L).
\]
Once again we require $c_1$ to be perhaps even smaller than before for the last inequality to hold.

Hence, for $k\gg 1$
\[
\cM^+_{\l,\L}(x_n D^2\phi+2e_n\odot D\phi) + \L r_k|x_nD\phi+\phi e_n| + \L r^2_k x_n\phi \leq -\l/2\leq -Cr_k \text{ in } B_{1/2}^{n-1}\times(0,\m].
\]
By comparison, $w_k\geq c_0\phi/(2\mu^2)$ in $B_{1/2}^{n-1}\times(0,\m]$.

We finally conclude by noticing that, for $\m$ sufficiently small, $\phi$ is bounded away from zero in the cylinder $B_\m^{n-1}\times(0,\m]$, hence for some $\theta>0$ independent of $k$
\[
\inf_{B_\m^{n-1}\times(0,\m]} v_k/x_n \geq \theta.
\]

Finally, when we transfer this information back to $u$ we get that
\[
u(x) \geq -\tfrac{1}{2\e}|x'|^2 + p'\cdot x' + (\r(2r_k)+\theta)x_n \text{ in } B^{n-1}_{\mu r_k}\times[0,\mu r_k).
\]
This means that $\r(\mu r_k)-\r(2r_k) \geq \theta>0$, which contradicts the convergence of $\r(r)$ to a finite limit as $r\to0^+$.
\end{proof}

In the following corollary we assume that the trace of the solution over the boundary gets trapped between two paraboloids. The conclusion is a bit stronger than differenciability.

\begin{corollary}\label{cor1}
Let $[\l,\L]\ss(0,\8)$, $\e\in(0,1)$, $p'\in \R^{n-1}$ with $|p'|\leq \L$, and let $u\in C(\overline{B_1\cap\{x_n> 0\}})$ satisfy
\[
\begin{cases}
\cM^+_{\l,\L}(D^2u) + \L|Du| + \L u_+ \geq -\L \text{ in } B_1\cap\{x_n>0\},\\
\cM^-_{\l,\L}(D^2u) - \L|Du| - \L u_- \leq \L \text{ in } B_1\cap\{x_n>0\},\\
-\tfrac{1}{2\e}|x'|^2+ p'\cdot x' \leq u \leq \tfrac{1}{2\e}|x'|^2+ p'\cdot x' \text{ on } B_1\cap\{x_n=0\}.
\end{cases}
\]
Then $u$ is differentiable at the origin with $Du(0)=(p',p_n)$ and
\[
\liminf_{\{x_n>0\}\ni x \to 0} \frac{u(x) - (-\tfrac{1}{2\e}|x'|^2+ Du(0)\cdot x)}{x_n} = \limsup_{\{x_n>0\}\ni x \to 0} \frac{u(x) - (\tfrac{1}{2\e}|x'|^2+ Du(0)\cdot x)}{x_n} =0.
\]
\end{corollary}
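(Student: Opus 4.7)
The plan is to apply Lemma \ref{lem:44} twice: first to $u$ with boundary slope $p'$, and then to $-u$ with boundary slope $-p'$ (the Pucci class is invariant under $u\mapsto -u$, and the upper paraboloid bound on $u$ rewrites as $-u \geq -\tfrac{1}{2\e}|x'|^2 + (-p')\cdot x'$ on $\{x_n=0\}$). Each invocation yields at least one of the two alternatives, producing four possible cases: $(1,1)$, $(1,2)$, $(2,1)$, and $(2,2)$. I will rule out the first three, match the normal-slope parameters obtained in the $(2,2)$ case, and read off the differentiability and the two one-sided limits directly from the resulting four half-ball/cone estimates.

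\textbf{Ruling out alternative (1) on either side.} In case $(1,1)$, pick $p_n = 1$ in alternative (1) for $u$ and $q'_n = 1$ in alternative (1) for $-u$; adding the two lower bounds on a common half-ball and restricting to the axis $x' = 0$ gives $0 \geq 2 x_n$, a contradiction. In case $(1,2)$, denote by $q'_n$ the fixed parameter from alternative (2) for $-u$, pick $p_n := 1 - q'_n$ in alternative (1) for $u$, and pick $\eta = 1/2$ in alternative (2) for $-u$; the upper bound for $u$ inherited from the half-ball lower bound for $-u$ combined with the lower bound from (1) produces $(1/2) x_n \leq 0$ on the axis, another contradiction. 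Case $(2,1)$ is analogous. Hence case $(2,2)$ must hold.

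\textbf{Matching of normal slopes in case $(2,2)$.} Let $p_n$ and $q'_n$ be the parameters in the two realizations of alternative (2). Given $\eta>0$, pick a common radius $r$ on which all four alternative-(2) inequalities are valid. Combining the half-ball lower bound for $u$ with the half-ball upper bound for $u$ inherited from $-u$ yields $(p_n + q'_n - 2\eta) x_n \leq \tfrac{1}{\e}|x'|^2$ on $B_r\cap\{x_n\geq 0\}$; evaluating at $x' = 0$ forces $p_n + q'_n \leq 2\eta$. The analogous comparison on the cone $\{x_n > \eta|x|\}$ gives $p_n + q'_n \geq -2\eta$. Letting $\eta\to 0$ forces $q'_n = -p_n$.

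\textbf{Differentiability and one-sided limits.} Setting $p := (p', p_n)$ and using $q'_n = -p_n$, the four alternative-(2) estimates read, for each $\eta > 0$ and some $r > 0$,
\begin{align*}
-\tfrac{1}{2\e}|x'|^2 + p\cdot x - \eta x_n &\leq u(x) \leq \tfrac{1}{2\e}|x'|^2 + p\cdot x + \eta x_n \quad\text{in } B_r\cap\{x_n\geq 0\},\\
\tfrac{1}{2\e}|x'|^2 + p\cdot x - \eta x_n &\leq u(x) \leq -\tfrac{1}{2\e}|x'|^2 + p\cdot x + \eta x_n \quad\text{in } B_r\cap\{x_n>\eta|x|\}.
\end{align*}
The first line yields $|u(x) - p\cdot x| \leq \tfrac{1}{2\e}|x'|^2 + \eta x_n = o(|x|)$, so $u$ is differentiable at the origin with $Du(0) = p$. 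The half-ball lower bound in the first line and the cone upper bound in the first line (evaluated along the axis $x' = 0$, which lies in every cone $\{x_n > \eta|x|\}$) together squeeze the quotient $\bigl(u(x) - (-\tfrac{1}{2\e}|x'|^2 + p\cdot x)\bigr)/x_n$ between $-\eta$ and $\eta$ in the relevant senses, giving $\liminf = 0$; the second line treats the remaining quotient analogously, giving $\limsup = 0$. The delicate step is the cancellation $p_n + q'_n = 0$, which hinges on playing the half-ball bound from one realization of alternative (2) against the cone bound from the other, evaluated along the axis where the paraboloid terms vanish.
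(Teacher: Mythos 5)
Your proposal is correct and fills in precisely the argument the paper leaves implicit (the paper states Corollary \ref{cor1} without proof, as a consequence of Lemma \ref{lem:44}). Applying Lemma \ref{lem:44} to both $u$ and $-u$ — checking that the Pucci sandwich is invariant under $u \mapsto -u$ and that the upper paraboloid bound becomes a lower bound for $-u$ — and then ruling out the cases where alternative~(1) occurs by evaluating along the $x_n$-axis, is exactly the intended route. Your matching of normal slopes $q'_n = -p_n$ via the paraboloid-free axis comparison is the key step, and the reading off of the two one-sided limits from the four resulting inequalities is handled correctly. One small exposition slip: in the last paragraph, the ``cone upper bound'' controlling the quotient $\bigl(u(x) - (-\tfrac{1}{2\e}|x'|^2 + p\cdot x)\bigr)/x_n$ from above comes from the \emph{second} displayed line (the cone estimate $u \leq -\tfrac{1}{2\e}|x'|^2 + p\cdot x + \eta x_n$ on $\{x_n>\eta|x|\}$), not the first; as written the first line only provides the half-ball lower bound for this quotient. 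The substance of the squeeze along the axis is right, and the analogous pairing (half-ball upper bound from the first line, cone lower bound from the second line) gives $\limsup = 0$ for the other quotient.
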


Notice that we could also state the limits in the conclusion by saying that
\begin{align*}
u(x) \geq Du(0)\cdot x - \tfrac{1}{2\e}|x'|^2 + o(x_n),\\
u(x) \leq Du(0) \cdot x + \tfrac{1}{2\e}|x'|^2 + o(x_n).
\end{align*}
This is a finer estimate than the differentiablity of $u$ at zero given by $u(x) = Du(0)\cdot x + o(|x|)$.

\subsubsection{Eikonal regime}\label{sec:eik_reg}

In this section we present some regularity estimates for first order equations with quasi-convex operators $H\in C(\R^n)$. We measure this in terms of the modulus of continuity given by the one-homogeneous functions
\[
\phi^\pm(x) :=  \pm \max_{p\in \{H\leq0\}} (\pm p\cdot x).
\]

The first few results (up to Corollary \ref{cor:phi_m_sub}) are known in the literature, however the hypotheses seem to be a bit different. In \cite{MR1080619} and also \cite[Section 2]{MR4328923} the operators are assumed to be convex. We decided to include complete proofs in the quasi-convex setting for the results we will be needing ahead.

\begin{lemma}\label{lem:H_below}
Let $H\in C(\R^n)$ be a first-order operator with bounded sub-level sets and let $u \in C(B_r)$ be a viscosity sub-solution of $Hu\leq 0$ in $B_r$. Then $u \leq u(0) + \phi^+$ in $B_r$.
\end{lemma}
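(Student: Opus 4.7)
The plan is to integrate the gradient constraint along segments emanating from the origin. Since $H$ has bounded sub-level sets, there is an $R>0$ with $\{H\leq 0\}\subset B_R$, so $u$ is a fortiori a viscosity sub-solution of $|Du|-R\leq 0$ in $B_r$; Lemma \ref{lem:lips1} applied on any sub-ball then gives that $u$ is locally Lipschitz on $B_r$.

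Next I would invoke the standard principle that a Lipschitz viscosity sub-solution of a continuous first-order equation satisfies the equation pointwise almost everywhere: at any $x_0$ where $u$ is differentiable, the test function $\varphi(x):=u(x_0)+Du(x_0)\cdot(x-x_0)+\delta|x-x_0|^2$ (with $\delta>0$ arbitrarily small) touches $u$ from above on a sufficiently small ball, so $H(Du(x_0))\leq 0$ follows by continuity of $H$ as $\delta\to 0^+$. Combined with Rademacher's theorem this yields $Du(x)\in\{H\leq 0\}$ for a.e. $x\in B_r$.

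I would then apply Fubini in polar coordinates to transfer this a.e. information onto rays. Denoting by $N\subset B_r$ the null set where $u$ is not differentiable, the identity
\[
0=|N|=\int_{S^{n-1}}\int_0^r \mathbf{1}_N(t\omega)\,t^{n-1}\,dt\,d\sigma(\omega)
\]
shows that for $\sigma$-a.e. direction $\omega\in S^{n-1}$, $u$ is differentiable at a.e. point of the ray $\{t\omega:t\in(0,r)\}$. For such $\omega$ and any $\bar x=s\omega$ with $s\in[0,r)$, the Lipschitz function $t\mapsto u(t\bar x)$ is absolutely continuous, and the fundamental theorem of calculus together with the inequality $p\cdot \bar x\leq \phi^+(\bar x)$ for all $p\in\{H\leq 0\}$ gives
\[
u(\bar x)-u(0)=\int_0^1 Du(t\bar x)\cdot \bar x\,dt\leq \phi^+(\bar x).
\]
Continuity of $u$ and of $\phi^+$ then extends the inequality to every $\bar x\in B_r$.

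The main point I expect to require careful justification is the a.e. validity of $H(Du)\leq 0$ for a Lipschitz sub-solution; everything else is a routine combination of Rademacher, Fubini in polar coordinates, and the one-homogeneity of $\phi^+$, whose very definition $\phi^+(x)=\max_{p\in\{H\leq 0\}} p\cdot x$ is tailor-fit to turn an a.e. gradient constraint into a pointwise upper barrier via line integration.
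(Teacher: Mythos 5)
Your overall strategy is genuinely different from the paper's and is, in principle, sound: you reduce to showing that a Lipschitz sub-solution satisfies $Du(x)\in\{H\leq0\}$ at almost every point, and then integrate the gradient constraint along rays from the origin, using the very definition of $\phi^+$ as a support function. The paper instead never passes to an a.e.\ representation: it regularizes the target $\phi^+$ itself by strictly convex, $1$-homogeneous approximants $\phi_R$ that are $C^1$ away from $0$, and runs a single viscosity touching argument against $u(0)+\phi_R+\eta/(r-|x|)$, using the identity $\phi_R(x)=D\phi_R(x)\cdot x$. The paper's route stays entirely within the viscosity framework; yours imports measure-theoretic machinery.

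However, there is a genuine gap in the step you yourself single out as needing care. The assertion that at a point $x_0$ of differentiability the function $\varphi(x)=u(x_0)+Du(x_0)\cdot(x-x_0)+\delta|x-x_0|^2$ touches $u$ from above on a small ball is \emph{false} for a general Lipschitz $u$. Differentiability at $x_0$ only gives $u(x)-u(x_0)-Du(x_0)\cdot(x-x_0)=o(|x-x_0|)$, and an $o(|x-x_0|)$ error need not be dominated by $\delta|x-x_0|^2$ near $x_0$. A concrete one-dimensional counterexample is $u(x)=x^{2}\sin(1/x)$ (extended by $0$), which is Lipschitz, differentiable at $0$ with $u'(0)=0$, yet $\limsup_{x\to0}u(x)/x^2=1$, so no paraboloid $\delta x^2$ touches it from above at $0$. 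Thus your argument, as written, does not establish $H(Du)\leq0$ a.e. The statement you need is true, but its standard proofs go through sup-convolution plus Alexandrov's theorem (or Jensen's lemma), i.e.\ one first regularizes to a semi-convex sub-solution that is twice differentiable a.e.\ and has second-order Taylor polynomials as admissible test functions, and then passes to the limit in $\varepsilon$. Without patching this step, the rest of your polar--Fubini argument has nothing to integrate.
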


\begin{proof}
Given $R\gg 1$, consider
\[
\phi_R(x) := \max_{p\in Z_R} p\cdot x \qquad\text{ where }\qquad Z_R := \bigcap_{\overline{B_R(p)}\supseteq \{H\leq0\}} \overline{B_R(p)}.
\]
Notice then that $\phi_R\searrow \phi^+$ as $R\to\8$. Indeed, the inequality $\phi_R\geq \phi^+$ and the monotonicity of the sequence follow because $Z_R$ decreases to $\operatorname{conv}(\{H\leq0\})\supseteq \{H\leq 0\}$, the convex hull of $\{H\leq0\}$. Let us fix $x^* \in \R^n$ and let $p_R\in Z_R$ such that $\phi_R(x^*) = p_R\cdot x^*$. By compactness we get that for some sequence $R_i\to\8$ the points $p_{R_i} \to p^* \in \operatorname{conv}(\{H\leq0\})$. Given that $p^* = \l p_1+(1-\l)p_2$ for some $p_1,p_2\in \{H\leq0\}$ and $\l\in[0,1]$, we get that
\[
\lim_{R\to\8}\phi_R(x^*) \geq \phi^+(x^*) = \l\phi^+(x^*) + (1-\l)\phi^+(x^*) \geq (\l p_1+(1-\l)p_2)\cdot x^* = \lim_{R\to\8}\phi_R(x^*).
\]

The advantage of this construction is that $Z_R$ is strictly convex and then $\phi_R$ is differentiable outside the origin. By noticing that $\phi_R$ is also $1$-homogeneous we get the following identity for any $x\in\R^n\sm\{0\}$,
\[
\phi_R(x) = D\phi_R(x) \cdot x.
\]

To check that $Z_R$ is strictly convex we consider $p_1\neq p_2\in Z_R$, $p_{1/2}:= (p_1+p_2)/2$, and show that for $r = R-\sqrt{R^2-|p_2-p_1|^2/4}$ it happens that $B_r(p_{1/2})\ss Z_R$. For any $\overline{B_R(p_0)}\supseteq \{p_1,p_2\}$ we clearly have that $B_r(p_{1/2})\ss \overline{B_R(p_0)}$. As this happens for any $\overline{B_R(p_0)}\supseteq \{H\leq0\}$ we immediately conclude that $B_r(p_{1/2})\ss Z_R$.

To see that the convex function $\phi_R$ is differentiable for any $x^*\neq 0$ we notice that $\phi_R(x^*) = p^*\cdot x^*$ for a unique $p^*\in Z_R$. Otherwise, $\phi_R(x^*) = p_1\cdot x^* = p_2\cdot x^* = p_{1/2}\cdot x^*$ for $p_1\neq p_2\in Z_R$ and $p_{1/2}= (p_1+p_2)/2$. Then some $p \in B_r(p_{1/2})\ss Z_R$ would improve the maximum of $p\cdot x^*$, contradicting the definition of $\phi_R(x^*)$.

To finally show that $u(x)\leq u(0)+\phi_R(x)$ for $R\gg1$, assume by contradiction that
\[
m:= \sup_{x\in B_r} (u(x)-u(0) - \phi_R(x)) >0.
\]
Hence, for some $\eta>0$ the following function must touch $u$ from above at some $x^*\in B_r\sm\{0\}$
\[
\psi(x) := u(0)+\tfrac{m}{2}+\phi_R(x)+\tfrac{\eta}{r-|x|}.
\]

At this point we must notice that for any $t>0$, $H(D\phi_R(x^*)+t x^*)>0$. Indeed, if instead $H(D\phi_R(x^*)+t x^*)\leq 0$ we get the contradiction
\[
\phi_R(x^*) = \max_{p\in Z_R} p\cdot x^* \geq (D\phi_R(x^*)+t x^*)\cdot x^*>D\phi_R(x^*)\cdot x^* = \phi_R(x^*).
\]
This observation implies that $H(D\psi(x^*))$ is strictly positive, which is inconsistent with $u$ being a sub-solution of the equation.
\end{proof}

By invoking a covering argument and reversing the roles of the points in the previous lemma we obtain the following corollary.

\begin{corollary}\label{cor:H_below}
Let $\W\ss\R^n$ be open convex set. Let $H\in C(\R^n)$ be a first-order operator with bounded sub-level sets and let $u \in C(\W)$ be a sub-solution of $Hu\leq 0$ in $\W$. Then for any $x,y \in B_r(x_0) \ss B_{2r}(x_0) \ss \W$
\[
\phi^-(x-y)\leq u(x) -u(y) \leq \phi^+(x-y).
\]
\end{corollary}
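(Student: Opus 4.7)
The plan is to establish the upper bound $u(x) - u(y) \leq \phi^+(x-y)$ via a chaining argument based on Lemma \ref{lem:H_below}, and then deduce the lower bound $\phi^-(x-y) \leq u(x) - u(y)$ by swapping the roles of $x$ and $y$ together with the identity $\phi^-(v) = -\phi^+(-v)$, which is immediate from the definitions of $\phi^\pm$.

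For the upper bound, I would fix $x,y \in B_r(x_0)$ and connect them by the straight segment, which lies inside the convex set $B_r(x_0) \subseteq B_{2r}(x_0) \subseteq \W$. Since this segment has positive distance $\delta > 0$ from $\partial \W$, I would choose $N$ large enough that $|x-y|/N < \delta$ and subdivide the segment into equal pieces with endpoints $y = y_0, y_1, \ldots, y_N = x$, so that $y_i - y_{i-1} = \tfrac{1}{N}(x-y)$. For each $i$, the ball $B_\delta(y_{i-1})$ is contained in $\W$ and contains $y_i$; applying Lemma \ref{lem:H_below} on $B_\delta(y_{i-1})$ gives
\[
u(y_i) - u(y_{i-1}) \leq \phi^+(y_i - y_{i-1}) = \tfrac{1}{N}\phi^+(x-y),
\]
where the last equality uses the positive $1$-homogeneity of $\phi^+$ (clear from its definition as the support function of $\{H\leq 0\}$). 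Summing over $i = 1,\ldots,N$ telescopes to $u(x) - u(y) \leq \phi^+(x-y)$.

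For the lower bound, I would apply the upper bound with $x$ and $y$ interchanged to get $u(y) - u(x) \leq \phi^+(y-x)$. From the definition of $\phi^-$,
\[
\phi^-(x-y) = -\max_{p\in\{H\leq 0\}}\bigl(-p\cdot (x-y)\bigr) = -\phi^+(y-x),
\]
so rearranging yields $\phi^-(x-y) \leq u(x) - u(y)$, as claimed.

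The argument is essentially mechanical once Lemma \ref{lem:H_below} is in hand; the only mild subtlety is to choose the subdivision points collinearly, so that positive homogeneity of $\phi^+$ turns the chain into a telescoping \emph{equality} for $\phi^+(x-y)$, rather than the possibly strict inequality one would obtain from mere sub-additivity of $\phi^+$ along a curved path. The role of the convexity of $\W$ (in fact, only the convexity of $B_{2r}(x_0)\subseteq \W$ is used) is precisely to guarantee that the straight segment from $y$ to $x$, together with a uniform tubular neighborhood, fits inside $\W$ so that Lemma \ref{lem:H_below} can legitimately be applied at each step.
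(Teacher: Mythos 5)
Your proof is correct and matches what the paper intends: the paper states the corollary follows "by invoking a covering argument and reversing the roles of the points in the previous lemma," which is precisely your chaining along the segment (using positive homogeneity of $\phi^+$ to telescope) followed by swapping $x$ and $y$ via $\phi^-(v)=-\phi^+(-v)$. You have merely written out the details the paper leaves implicit.
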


By Corollary \ref{cor:perron}, we deduce that $\phi^+$ is a sub-solution of $H\phi\leq 0$ in $\R^n$. Additionally, thanks to the following lemma, we can conclude that $\phi^-$ is as well a sub-solution of $H\phi\leq 0$ in $\R^n$ if $\{H\leq0\}$ is convex (or if we assume that $H$ is quasi-convex). It is worth recalling that in the previous proof, we have shown that $\phi^\pm$ are classical solutions of $H\phi=0$ in $\R^n\sm \{0\}$ if $\{H\leq0\}$ is strictly convex.

\begin{lemma}\label{lem:inf_conv}
Let $H\in C(\R^n)$ be a first-order quasi-convex operator with bounded sub-level sets. Let $\W\ss\R^n$ be open, $\{u_k\} \ss C^1(\W)$ be an equicontinuous sequence of functions satisfying $Hu \leq 0$ in $\W$, and assume that $u(x) := \inf u_k(x)$  finite valued. Then $u$ is a viscosity sub-solution of $Hu \leq 0$ in $\W$.
\end{lemma}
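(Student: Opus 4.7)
The plan is to exploit the quasi-convexity of $H$ to identify the zero sub-level set $K := \{H\leq 0\}$ as a closed, bounded, convex subset of $\R^n$, and to use the fact that each $u_k$ respects $K$ through its gradient. The infimum $u$ will inherit a uniform modulus estimate governed by the support function of $K$, namely the one-homogeneous function $\phi^+$, and the viscosity sub-solution property will then follow by support-function duality.

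First, because each $u_k$ is a $C^1$ (hence viscosity) sub-solution of $Hu_k\leq 0$ in $\W$, Corollary \ref{cor:H_below} applied to each $u_k$ yields
\[
u_k(x) - u_k(y) \leq \phi^+(x-y) \qquad \text{for } x,y\in B_r(x_0)\ss B_{2r}(x_0)\ss\W.
\]
Taking the infimum over $k$ and using an $\e$-approximation only at the point $y$ (the inequality $u(x)\leq u_k(x)$ is automatic) transfers the estimate to the limit:
\[
u(x) - u(y) \leq \phi^+(x-y) \qquad \text{for } x,y\in B_r(x_0).
\]

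Next, let $\varphi\in C^1(B_\r(x_0))$ touch $u$ from above at some $x_0\in\W$. For every unit vector $e\in\p B_1$ and every $t>0$ small, the touching condition together with the modulus estimate give
\[
\varphi(x_0) - \varphi(x_0-te) \leq u(x_0) - u(x_0-te) \leq \phi^+(te) = t\,\phi^+(e),
\]
so dividing by $t$ and sending $t\to 0^+$ yields $D\varphi(x_0)\cdot e\leq \phi^+(e)$ for every $e\in \p B_1$, and hence by one-homogeneity for every $e\in\R^n$. Since $K=\{H\leq 0\}$ is closed, bounded, and convex (this is exactly where quasi-convexity enters), and $\phi^+(e)=\max_{p\in K}p\cdot e$ is precisely its support function, this family of inequalities characterizes membership in $K$. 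Therefore $D\varphi(x_0)\in K$ and $H(D\varphi(x_0))\leq 0$.

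The delicate point is the transfer of the modulus estimate from the $u_k$'s to the infimum: the naive attempt to find a single index $k$ such that $u_k-\varphi$ attains an interior minimum near $x_0$ can fail because, although one can make $u_k(x_0)$ close to $u(x_0)$, on the boundary of a small ball the function $u_k$ may still be far from $u$, so the minimum can escape to the boundary. The one-point $\e$-approximation in the modulus inequality bypasses this obstruction entirely. The quasi-convexity hypothesis is used crucially, and only, at the final support-function duality step: without it, the inequality $D\varphi(x_0)\cdot e\leq \phi^+(e)$ would only place $D\varphi(x_0)$ in the convex hull of $\{H\leq 0\}$, which could be strictly larger. Equicontinuity plays no direct role in the body of the argument; it is needed only to ensure that the infimum $u$ is continuous so that the viscosity sub-solution notion is well-posed.
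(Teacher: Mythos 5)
Your proof is correct and takes a genuinely different route than the paper's. The paper reduces to the finite lattice property (the minimum of two $C^1$ sub-solutions is a viscosity sub-solution) and then passes to the limit using the uniform convergence of $v_k:=\min(u_1,\ldots,u_k)$ and stability. The finite case is handled by a geometric analysis of the level set $\{u_1=u_2\}$: when $Du_1(x_0)\neq Du_2(x_0)$ this set is a $C^1$ hypersurface near $x_0$, coordinates are rotated so the gradients agree tangentially, and the test function's gradient is shown to be a convex combination of $Du_1(x_0)$ and $Du_2(x_0)$; quasi-convexity then places it in $\{H\leq 0\}$. Your argument bypasses that reduction entirely: you push the one-sided modulus estimate of Corollary \ref{cor:H_below} directly through the infimum (using only the one-point approximation at $y$, which you correctly identify as the step where a naive ``touch a single $u_k$'' argument would fail), and then deduce from the resulting directional bounds $D\varphi(x_0)\cdot e\leq\phi^+(e)$, valid for all $e$, that $D\varphi(x_0)\in\{H\leq 0\}$ by support-function duality, since $\{H\leq 0\}$ is closed, bounded, and convex. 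Quasi-convexity enters both proofs essentially but at different points---in the paper when forming the convex combination, in yours when passing from the support-function inequality to membership in $\{H\leq 0\}$ rather than merely in its convex hull. Your route is more economical and also exposes that the $C^1$ hypothesis on the $u_k$ can be weakened to ``viscosity sub-solution'' (and the equicontinuity dropped, as the Lipschitz estimate already gives it), since Corollary \ref{cor:H_below} needs nothing more; by contrast, the paper's geometric argument uses the $C^1$ structure of $\{u_1=u_2\}$ in an essential way.
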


This result is also valid for Lipschitz solutions and appears for instance in \cite[Corollary 2.36 and Exercise 30]{MR4328923} as a consequence of the characterization of viscosity solutions for convex Hamiltonians by Barron and Jensen in \cite{MR1080619}.

\begin{proof}
The function $u$ as a uniform limit of functions the form $v_k := \min(u_1,\ldots,u_k)$. Hence the result will follow by the stability of viscosity sub-solutions once we show that each $v_k$ is a sub-solution of $Hu=0$ in $\W$. By an inductive argument we can further reduce the analysis to the case $k=2$.

Let $\varphi \in C^1(B_r(x_0))$ be a test function touching $v_2$ strictly from above at $x_0$ with $B_r(x_0)\ss\W$. If $u_1(x_0)<u_2(x_0)$ we get that $\varphi$ touches $u_1$ from above at $x_0$ over a neighborhood of $x_0$, so we conclude using that $u_1$ is a viscosity sub-solution. The case  $u_2(x_0)<u_1(x_0)$ can be treated in the same way and then we are left with the alternative $u_1(x_0)=u_2(x_0)$.

If $p:= Du_1(x_0)\neq q:= Du_2(x_0)$, then ${u_1=u_2}$ forms a $C^1$ surface around $x_0$ with normal given by $Du_1-Du_2\neq 0$. We can then choose a system of coordinates centered at $x_0$ such that $p'= q'$ and $p_n < q_n$. Consequently, the exterior normal of $\{u_1>u_2\}$ at the origin is the vector $e_n$.

As $\varphi$ touches $\min(u_1,u_2)=u_1=u_2$ over $\{u_1=u_2\}$ from above at $x_0$, we have that the tangential derivatives must coincide
\[
(D\varphi(x_0))' = p'= q'.
\]
On the other hand, for the normal derivatives we must also have that $p_n\leq \p_n\varphi(x_0) \leq q_n$. For instance, $p_n\leq \p_n\varphi(x_0)$ follows because $\varphi -\varphi(x_0)\geq u_1-u_1(x_0)$ over $\{u_1<u_2\}$, which has $e_n$ as a interior normal vector at $x_0$.

Now we notice that for $\l:= (\p_n\varphi(x_0)-p_n)/(q_n-p_n)\in[0,1]$ we get that by the convexity of $\{H\leq0\}$ and using that $u_1$ and $u_2$ are classical sub-solutions
\[
D\varphi(x_0) = \l Du_1(x_0)+(1-\l)Du_2(x_0) \in \{H\leq 0\}
.
\]

If instead, $Du_1(x_0)= Du_2(x_0)$ we must also have $D\varphi(x_0)=Du_1(x_0)= Du_2(x_0) \in \{H\leq0\}$. Otherwise, we have that $\{\varphi<u_1\}$ and $\{\varphi<u_2\}$ have the same exterior normal vector at $x_0$, so that $\{\varphi\geq u_1\}\cup\{\varphi\geq u_2\}$ can not cover a neighborhood of $x_0$. This contradicts $\varphi\geq \min(u_1,u_2)$ around $x_0$.
\end{proof}

\begin{corollary}\label{cor:phi_m_sub}
Let $H\in C(\R^n)$ be a first-order quasi-convex operator with bounded sub-level sets. Then $\phi^-$ is a viscosity solution of the equation $H\phi = 0$ in $\R^n$.
\end{corollary}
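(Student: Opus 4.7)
My plan is to split the proof into the sub-solution and super-solution verifications, using Lemma \ref{lem:inf_conv} for the first and a direct extremal-point argument for the second.

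For $H\phi^-\leq 0$ in the viscosity sense, I will exhibit $\phi^-$ as a pointwise infimum of a countable, equicontinuous family of classical sub-solutions. Pick a countable dense subset $\{p_k\}\subset \{H\leq 0\}$, which is compact since $H$ is continuous with bounded sub-level sets, and set $u_k(x):=p_k\cdot x$. Each $u_k\in C^1(\R^n)$ satisfies $Hu_k=H(p_k)\leq 0$; the family is equicontinuous on compacts because $\{H\leq 0\}$ is bounded; and continuity of $p\mapsto p\cdot x$ together with density of $\{p_k\}$ in $\{H\leq 0\}$ gives $\inf_k u_k(x)=\phi^-(x)$. Lemma \ref{lem:inf_conv} then delivers the sub-solution property.

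For the super-solution, suppose $\varphi\in C^2(B_r(x_0))$ touches $\phi^-$ from below at $x_0\in\R^n$. When $x_0\neq 0$, compactness provides $p^*\in\{H\leq 0\}$ with $\phi^-(x_0)=p^*\cdot x_0$; since $\phi^-(x)\leq p^*\cdot x$ for all $x$, the linear function $p^*\cdot x$ touches $\varphi$ from above at $x_0$, so $D\varphi(x_0)=p^*$. The key observation is that $p^*\in\partial\{H\leq 0\}$: otherwise a perturbation $p^*-tx_0$ with $t>0$ small would remain in $\{H\leq 0\}$ and give a strictly smaller inner product with $x_0$, contradicting minimality. Continuity of $H$ forces $H(p^*)=0$, hence $H(D\varphi(x_0))\geq 0$.

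The case $x_0=0$ is handled separately. Testing $\varphi(te)\leq\phi^-(te)$ for $t$ of both signs and using positive one-homogeneity of $\phi^-$ (which yields $\phi^-(te)=t\phi^-(e)$ for $t>0$ and $\phi^-(te)=t\phi^+(e)$ for $t<0$) produces $\phi^+(e)\leq D\varphi(0)\cdot e\leq\phi^-(e)$ for every unit vector $e$. Since $\phi^-\leq\phi^+$, equality holds throughout, forcing $\{H\leq 0\}$ to collapse to the single point $\{D\varphi(0)\}$; by continuity $H(D\varphi(0))=0$. In other words, the super-solution test at the origin is vacuous except in this degenerate linear configuration, which is itself trivially a solution. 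I expect the subtlety at $x_0=0$ and the need to rule out the degenerate singleton case to be the main obstacle; everything else reduces to Lemma \ref{lem:inf_conv} and the extremal-point observation already exploited in the proof of Lemma \ref{lem:H_below}.
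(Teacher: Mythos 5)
Your proof is correct. The sub-solution half follows exactly the route the paper indicates: $\phi^-$ is exhibited as the infimum of the affine classical sub-solutions $x\mapsto p\cdot x$ over a countable dense subset of the compact set $\{H\leq 0\}$, and Lemma \ref{lem:inf_conv} applies verbatim. This is the only part of the corollary the paper spells out (in the paragraph preceding Lemma \ref{lem:inf_conv}), so on that side there is nothing to compare.

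For the super-solution half the paper gives no explicit argument; it only recalls that in the strictly convex case the proof of Lemma \ref{lem:H_below} produced classical solutions of $H\phi=0$ away from the origin, which hints at a strict-convexity/approximation route (via the $\phi_R$ there) but is never carried out, and it would not directly settle what happens at $0$ or in the merely convex case. Your argument is a clean, elementary replacement: for $x_0\neq 0$ you observe that the supporting affine function at $x_0$ pins down $D\varphi(x_0)=p^*$, and the strict decrease of $p\mapsto p\cdot x_0$ along $-x_0$ rules out $p^*$ being interior, so $H(p^*)=0$ by continuity of $H$ and closedness of $\{H\leq 0\}$; at $x_0=0$ the one-homogeneity computation forces $\phi^-=\phi^+$, i.e.\ $\{H\leq 0\}$ is a singleton, and again $H$ vanishes at that point because a singleton has empty interior. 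Both cases are handled correctly, including the implicit observation that outside the degenerate singleton case the super-solution test at $0$ is vacuous. The net effect is that you have supplied a self-contained proof where the paper essentially states the corollary as routine; the two are compatible but yours is the one that actually closes the gap, and it works for quasi-convex $H$ without any strict-convexity detour.

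One small stylistic remark: in the $x_0\neq 0$ step, any minimizer $p^*$ of $p\mapsto p\cdot x_0$ over $\{H\leq 0\}$ works for your argument, and the same argument incidentally shows the minimizer must be unique when a $C^2$ lower test function exists (otherwise $D\varphi(x_0)$ would have to equal two distinct vectors). You do not need this, but it explains why the touching condition is so restrictive at corner points of $\phi^-$, which is the geometric content behind the vacuity you exploit at $x_0=0$.
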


The following lemmas are the boundary estimates we will use for the eikonal problem. The first result says that under certain conditions a paraboloid $\varphi$ over $B_1\cap \{x_n=0\}$ can be extended to a paraboloid with $H(D\varphi(0))\leq 0$.

\begin{lemma}\label{lem:H_cons}
Let $H\in C(\R^n)$ and let $u\in C^{0,1}(B_1^{n-1}\times (-1,0])$ be a viscosity sub-solution of $Hu\leq 0$ in $B_1^{n-1}\times (-1,0)$ such that for some $p'\in \R^{n-1}$ and $\e>0$
\[
u \leq u(0) + \tfrac{1}{2\e}|x'|^2 + p'\cdot x' \text{ on } B_1\cap \{x_n=0\}.
\]
Then there exists $p=(p',p_n)\in \R^n$ such that $H(p)\leq 0$.
\end{lemma}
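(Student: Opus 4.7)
The plan is to locate $p_n$ by sliding an affine-quadratic test function inside the half-ball $B_\r\cap\{x_n\leq 0\}$ until it just fails to dominate $u$, and then reading off the equation at the resulting interior touching point.

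For $p_n\in\R$, $\a>0$, and $\r\in(0,1)$, introduce on $B_\r\cap\{x_n\leq 0\}$ the functions
\[
\varphi_{p_n}(x):=u(0)+\tfrac{1}{2\e}|x'|^2+p'\cdot x'+p_n x_n,\quad \eta(x):=\a\,\tfrac{|x|^2}{\r^2-|x|^2},\quad \Phi_{p_n}:=\varphi_{p_n}+\eta,
\]
and set $L:=[u]_{C^{0,1}(B_1^{n-1}\times(-1,0])}$. Combining the boundary hypothesis with the Lipschitz bound $|u(x)-u(x',0)|\leq L|x_n|$ one finds, for every $x\in B_\r\cap\{x_n\leq 0\}$,
\[
\varphi_{p_n}(x)-u(x)\geq (p_n+L)x_n,
\]
which is non-negative on $\{x_n\leq 0\}$ when $p_n\leq -L$; since $\eta\geq 0$, also $\Phi_{p_n}\geq u$. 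Conversely, testing $\Phi_{p_n}(0,-\r/2)$ against $u(0,-\r/2)\geq u(0)-L\r/2$ shows that any admissible $p_n$ must satisfy $p_n\leq L+2\a/(3\r)$. Hence
\[
p_n^*(\r,\a):=\sup\{p_n : \Phi_{p_n}\geq u \text{ on } B_\r\cap\{x_n\leq 0\}\}
\]
is finite and attained (using continuity in $p_n$ and that $\Phi_{p_n}-u\to+\8$ near $\p B_\r$).

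For $p_n$ slightly above $p_n^*(\r,\a)$ the positive number $\d_{p_n}:=\sup(u-\Phi_{p_n})$ is attained at some $x_{p_n}$ strictly inside $B_\r\cap\{x_n<0\}$: on $(\{x_n=0\}\cap B_\r)\sm\{0\}$ the barrier yields $u-\Phi_{p_n}<0$; at the origin $u-\Phi_{p_n}=0$; and near $\p B_\r$ the barrier makes $\Phi_{p_n}-u\to+\8$. Consequently $\Phi_{p_n}+\d_{p_n}$ is a $C^2$ test function touching $u$ from above at the interior point $x_{p_n}$, and the viscosity sub-solution property yields
\[
H\!\left(\tfrac{x'_{p_n}}{\e}+p'+D_{x'}\eta(x_{p_n}),\ p_n+\p_n\eta(x_{p_n})\right)\leq 0.
\]

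I conclude with three successive limits: $p_n\searrow p_n^*(\r,\a)$ (extracting a subsequence with $x_{p_n}\to x^*(\r,\a)$), then $\a\to 0^+$, and finally $\r\to 0^+$. The first preserves the inequality by continuity of $H$; the second sends the barrier gradient to zero and reduces $p_n^*(\r,\a)$ to some $p_n^*(\r)\in[-L,L]$; the third forces $|x^*(\r,\a)|\leq\r\to 0$, eliminating the tangential correction $x'^*/\e$, while $p_n^*(\r)$ admits (along a further subsequence) a limit $p_n^{**}\in[-L,L]$. Continuity of $H$ then delivers $H(p',p_n^{**})\leq 0$, which is the claim with $p:=(p',p_n^{**})$. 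The main obstacle is ensuring that the failure of $\Phi_{p_n}\geq u$ occurs at a point where the sub-solution equation is actually available—not on $\{x_n=0\}$ (outside the equation's domain) nor on $\p B_\r$ (invalid for viscosity testing)—and the barrier $\eta$ is tailored precisely to rule out both while being small enough not to corrupt the triple limit.
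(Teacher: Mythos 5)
Your plan closely mirrors the paper's: slide a test function from above against $u$ until it just touches, arrange for the contact point to fall in the interior of $B_\r\cap\{x_n<0\}$ via a barrier, read off the viscosity inequality, and pass to limits. The implementation, however, has a gap in the way the barrier contaminates the tangential component of the gradient at the contact point.

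Your barrier $\eta(x)=\a\,|x|^2/(\r^2-|x|^2)$ blows up on all of $\p B_\r$, not only on the missing face $\{x_n=0\}$, and consequently its \emph{tangential} gradient $D_{x'}\eta(x)=\tfrac{2\a\r^2 x'}{(\r^2-|x|^2)^2}$ enters the test-function gradient. You assert that the second limit $\a\to0^+$ "sends the barrier gradient to zero," but this is exactly what needs to be proved and is not automatic: it holds only if the contact point $x^*(\r,\a)$ stays away from $\p B_\r$ uniformly in $\a$, and nothing in the argument prevents $x^*$ from drifting towards $\p B_\r$ as $\a\to0^+$. The Lipschitz bound $|D\Phi_{p_n}(x_{p_n})|\leq L$ gives you only $\bigl|\bigl(\tfrac{1}{\e}+\tfrac{2\a\r^2}{(\r^2-|x^*|^2)^2}\bigr)x'^*\bigr|\leq L+|p'|$, i.e.\ boundedness; it does not force the term to vanish, so the limiting tangential gradient can be $p'+q'$ with $q'\neq 0$, and then $H(p'+q',p_n^{**})\leq 0$ is not the required conclusion. (Note also that the opposite order, $\r\to0^+$ before $\a\to0^+$, is worse: $D_{x'}\eta(x^*)$ can blow up like $\a/\r$ in that regime.)

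The paper's proof avoids this by choosing a barrier depending only on $x_n$, namely $-\eta\, x_n^{-1}$, so the tangential gradient of the test function has no barrier term at all; the lateral boundary $\p B_r^{n-1}\times[-r,0]$ and bottom $B_r^{n-1}\times\{-r\}$ are instead handled by doubling the curvature of the paraboloid (using $\tfrac{1}{\e}|x'|^2$ rather than $\tfrac{1}{2\e}|x'|^2$) and shifting down by $r^2/(4\e)$, which makes the test function strictly dominate $u$ there. The resulting tangential gradient at the contact point is exactly $\tfrac{2}{\e}x_r'+p'$ with $|x_r'|<r$, which manifestly converges to $p'$ as $r\to0^+$. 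To repair your argument you would need either to use a barrier with no tangential gradient, or to show separately that the contact point cannot approach $\p B_\r$; as written the step is unjustified.
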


\begin{proof}
For $L:= [u]_{C^{0,1}(B_1\cap\{x_n\leq 0\})}$ and $r\in (0,1/2)$ small, we get that
\[
u \leq u(0)+\tfrac{r^2}{2\e} |p'|r+\sqrt 2 Lr \text{ in } B_r^{n-1}\times [-r,0].
\]
Then, by letting $\r:=r/\e + |p'|+\sqrt 2 L$, we obtain that
\[
\varphi(x) := u(0) + \tfrac{1}{\e}|x'|^2 + p'\cdot x'-\r x_n - \tfrac{r^2}{4\e} > u \text{ on } (\p B_r^{n-1}\times [-r,0]) \cup (B_r^{n-1}\times \{-r\}).
\]

Given that $\varphi(0)< u(0)$ we must have that for some $\eta>0$ the function $\psi = \varphi - \eta x_n^{-1}$ touches $u$ from above at some $x_r\in B_r^{n-1}\times (-r,0)$. At such contact point we have
\[
p_r := D\psi(x_r) = (\tfrac{2}{\e}x_r' + p', -\r + \eta(x_r)_n^{-2}) \in B_L
\]
and $H(\tfrac{2}{\e}x_r' + p', -\r + \eta(x_r)_n^{-2})\leq 0$.

Then $|p_r'-p'|\leq 2r/\e\to 0$ as $r\to 0$ and by compactness there is an accumulation value such that $(p_r)_n\to p_n$ for some sequence. We can now conclude the proof thanks to the continuity of $H$.
\end{proof}

To finish this section let us consider $\eta>0$ and
\[
\phi_1^+(x) := \max_{p\in \{H+\eta\leq 0\}} p\cdot x.
\]
For any sub-solution $u$ of $Hu+\eta\leq 0$ in $B_1\cap \{x_n<0\}$ with boundary data $u\leq \varphi$ over $B_1\cap \{x_n=0\}$, the lower envelope given by the Hopf-Lax formula
\[
\psi(x):= \inf_{y'\in \R^{n-1}} \varphi(y')+\phi_1^+(x-y')
\]
is an upper bound for $u$. This is a consequence of Corollary \ref{cor:H_below}.

The following lemma gives us a further bound from above for $\psi$ whenever $\varphi$ can be extended to a paraboloid with $H(D\varphi(0))=0$.

\begin{lemma}\label{lem:hopf}
Let $H\in C(\R^n)$ be a first-order quasi-convex operator with bounded sub-level sets. Let $\eta>0$, and assume that there exists $p_1 = (p_*',\r_1) \in \R^n$ such that $H(p_1)+\eta\leq 0$. Let $p_0 = (p_*',\r_0) \in \R^n$ such that
\[
\r_0 = \min\{\r\in \R \ | \ H((p_*',\r)) \leq 0\}<\r_1.
\]
Let $\e>0$ and
\[
\varphi(x) := \tfrac{1}{2\e}|x'|^2 + p_0\cdot x, \qquad \psi(x):= \inf_{y'\in \R^{n-1}} \varphi(y')+\phi_1^+(x-y').
\]
Then for some $r>0$ sufficiently small $\psi\leq \varphi$ in $B_r\cap\{x_n\leq0\}$.
\end{lemma}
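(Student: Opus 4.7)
The plan is to exhibit, for each $x \in B_r \cap \{x_n \leq 0\}$, an explicit test point $y'(x) \in \R^{n-1}$ for which $\varphi(y'(x)) + \phi_1^+(x - y'(x)) \leq \varphi(x)$, thereby forcing $\psi(x) \leq \varphi(x)$. The naive choice $y' = x'$ is too weak, because $K := \{H + \eta \leq 0\}$ may well contain points whose $p_n$-coordinate is strictly below $\r_0$; the correct choice $y'(x) := x' + \a x_n$ tilts the test point using a vector $\a \in \R^{n-1}$ extracted from a separating hyperplane between $p_0$ and $K$.

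First I would observe that $p_0 \notin K$: by minimality of $\r_0$ and continuity of $H$, one has $H(p_0) = 0$, hence $H(p_0) + \eta = \eta > 0$. Under the hypotheses on $H$ (quasi-convex, continuous, bounded sub-level sets) $K$ is a non-empty compact convex set, so Hahn-Banach delivers $\ell \in \R^n\setminus\{0\}$ and $\d > 0$ with
\[
\ell \cdot q \leq \ell \cdot p_0 - \d \qquad \text{for every } q \in K.
\]
Testing at $q = p_1 \in K$ and using $p_1 - p_0 = (\r_1 - \r_0) e_n$ with $\r_1 - \r_0 > 0$ yields $\ell_n < 0$; this is precisely the step where the strict gap $\r_0 < \r_1$ from the hypothesis is essential. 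Normalize so that $\ell = (\a, -1)$ with $\a \in \R^{n-1}$.

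Next I would take $y'(x) := x' + \a x_n$ and use the identity
\[
x - y'(x) = (-\a x_n, x_n) = -x_n(\a,-1) = -x_n\,\ell.
\]
Since $-x_n \geq 0$, positive homogeneity of $\phi_1^+$ combined with the separation inequality gives
\[
\phi_1^+(x - y'(x)) = -x_n\,\phi_1^+(\ell) \leq -x_n(\ell \cdot p_0 - \d) = -x_n(\a \cdot p_*' - \r_0 - \d).
\]
Expanding $\varphi(y'(x)) = \tfrac{1}{2\e}|x' + \a x_n|^2 + p_*'\cdot(x' + \a x_n)$ and noting that the $\a \cdot p_*'\,x_n$ contributions cancel, one obtains
\[
\varphi(y'(x)) + \phi_1^+(x - y'(x)) - \varphi(x) \leq \frac{\a \cdot x'\,x_n}{\e} + \frac{|\a|^2 x_n^2}{2\e} + \d x_n.
\]

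The final step absorbs the cross and quadratic terms into the strictly negative contribution $\d x_n = -\d|x_n|$. For $|x'|,|x_n| \leq r$ those error terms are bounded by $|x_n|\cdot r(|\a| + |\a|^2/2)/\e$, so choosing $r$ small enough in terms of $\e$, $\d$, and $|\a|$ renders the right-hand side non-positive, yielding $\psi \leq \varphi$ in $B_r \cap \{x_n < 0\}$; the case $x_n = 0$ is immediate from $y' = x'$ and $\phi_1^+(0) = 0$. The main obstacle is the identification of the correct test point: one must translate the strict inequality $\r_0 < \r_1$ into a separating hyperplane between $p_0$ and $K$ whose normal has negative $n$-th component, and then align $x - y'(x)$ with the direction $\ell$ to exploit positive homogeneity of $\phi_1^+$.
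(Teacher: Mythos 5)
Your proof is correct and follows essentially the same route as the paper: separate $p_0$ from the compact convex sub-level set $K=\{H+\eta\leq 0\}$ via Hahn--Banach, show the separating direction has nonzero $n$-th component (using the strict gap $\r_0<\r_1$ through $p_1$), and then choose the test point $y'(x)$ so that $x-y'(x)$ is a nonnegative multiple of that direction, letting positive homogeneity of $\phi_1^+$ and the separation margin absorb the quadratic error terms. The only difference is cosmetic: the paper keeps the separating vector $z$ on $\partial B_1$ and derives the quantitative lower bound $z_n\geq r_0/(\r_1-\r_0)$, whereas you normalize $\ell=(\a,-1)$, which makes the cancellation of the $p_*'$ and $\r_0$ terms slightly more transparent but does not change the argument.
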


\begin{proof}
The goal is to see that we can take $r$ sufficiently small such that for any $x \in B_r\cap\{x_n\leq0\}$ there exists some $y'\in B_1\cap \{x_n=0\}$ for which
\[
\frac{1}{2\e}|y'|^2 +p_*'\cdot y' + \max_{p\in\{H+\eta\leq 0\}}p\cdot (x-y')\leq \frac{1}{2\e}|x'|^2 + p_0\cdot x.
\]

Let $r_0\in(0,1)$ such that $B_{2r_0}(p_0) \cap \{H+\eta\leq 0\} = \emptyset$. By Hahn-Banach there exists some $z \in \p B_1$ such that
\[
\min_{p \in \{H+\eta\leq 0\}} z\cdot p \geq \max_{q \in B_{r_0}(p_0)} z \cdot q + r_0.
\]
Notice that $z_n>0$. Moreover, we can even give a strictly positive lower bound for $z_n$. Using that $p_0 \in B_{r_0}(p_0)$ and $p_1\in \{H+\eta\leq 0\}$ are separated by the previous application of Hanh-Banach we obtain that
\begin{align*}
&z\cdot p_1 = z'\cdot p_*'+z_n \r_{1} \geq z\cdot p_{0}+r_0 = z'\cdot p_*'+z_n\r_{0}+r_0,\\
\Rightarrow \qquad &z_n \geq r_0/(\r_1-\r_{0})>0.
\end{align*}

For any $\l \in (0,\e r_0)$ the vector $\bar z=\l z$ satisfies
\[
\min_{p \in \{H+\eta\leq 0\}} \bar z\cdot p \geq \max_{q \in B_{r_0}(p_{0})} \bar z \cdot q + \tfrac{1}{\e}\bar z_n^2.
\]

If we take $r$ smaller than $\e r_0\min(r_0/(\r_1-\r_{0}),1/2)$ and $x\in B_r\cap\{x_n\leq 0\}$, we get that $\l := -x_n/z_n \in (0,\e r_0)$ so the previous estimate holds. Let $y' = x + \bar z = x - (x_n/z_n)z \in B_1\cap\{x_n=0\}$ so that $|y'+x|=|\bar z+2x|\leq |\bar z|+2|x| < 2\e r_0$ and then
\begin{align*}
\min_{p \in \{H+\eta\leq 0\}} (y'-x)\cdot p &\geq \max_{q \in B_{r_0}(p_{0})}  (y'-x) \cdot q+\tfrac{1}{\e}x_n^2,\\
&\geq (y'-x)\cdot(\tfrac{1}{2\e}(y'+x) +p_{0}) +\tfrac{1}{2\e}x_n^2,\\
&= \tfrac{1}{2\e}|y'|^2-\tfrac{1}{2\e}|x'|^2+p_{0}\cdot (y'-x).
\end{align*}
This is the desired inequality which concludes the proof.
\end{proof}

\subsection{Proof of the comparison principle}

As a first step towards the proof of Theorem \ref{thm:comp} we show that the comparison principle holds under the following assumptions for the solutions $u, v \in C(B_1^{n-1}\times(-1,1))\cap C^{0,1}(B_1^{n-1}\times(-1,0])$ with respect to $\e\in(0,1)$, $[\l,\L]\ss(0,\8)$, $H \in C(\R^n)$, and $\eta>0$:
\begin{itemize}
\item[\namedlabel{hyp:con}{(C')}] The following functions are convex
\[
x' \in B_1^{n-1} \mapsto u(x',0)+\tfrac{1}{2\e}|x'|^2 \quad \text{ and } \quad x' \in B_1^{n-1} \mapsto -v(x',0)+\tfrac{1}{2\e}|x'|^2.
\]
\item[\namedlabel{hyp:ue2}{(UE')}] Both of the functions $u$ and $v$ satisfy in the viscosity sense
\[
\begin{cases}
\cM^+_{\l,\L}(D^2u) + \L|Du| + \L u_+ + \L \geq 0 \text{ in }  B_1^{n-1}\times(0,1),\\
\cM^-_{\l,\L}(D^2u) - \L|Du| - \L u_- - \L \leq 0 \text{ in }  B_1^{n-1}\times(0,1).
\end{cases}
\]
\item[\namedlabel{hyp:sub2}{(Sub')}] The function $u$ satisfies in the viscosity sense
\[
Hu +\eta \leq 0 \text{ in } B_1^{n-1}\times(-1,0].
\]
\item[\namedlabel{hyp:sup2}{(Sup')}] The function $v$ satisfies in the viscosity sense
\[
Hv \geq 0 \text{ in } B_1^{n-1}\times(-1,0].
\]
\end{itemize}

\begin{lemma}\label{main:lem}
Let $\eta>0$ and let $H \in C(\R^n)$ a first-order quasi-convex operator with bounded sub-level sets. Let $\e\in(0,1)$, $[\l,\L]\ss(0,\8)$, and let $u, v \in C(B_1^{n-1}\times(-1,1))\cap C^{0,1}(B_1^{n-1}\times(-1,0])$ satisfy \ref{hyp:con}, \ref{hyp:ue2}, \ref{hyp:sub2}, \ref{hyp:sup2} above. Then $u$ can not touch $v$ from below at the origin.
\end{lemma}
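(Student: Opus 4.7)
The plan is to proceed by contradiction, reproducing in higher dimensions the one-dimensional argument sketched in Section~\ref{sec:com_one_d}. Assume $u(x)\leq v(x)$ in a neighborhood of the origin with $u(0)=v(0)$. The semi-convexity of $u(\cdot,0)$ and semi-concavity of $v(\cdot,0)$ from hypothesis~\ref{hyp:con}, together with $u(x',0)\leq v(x',0)$ and equality at $0$, force a common tangential subgradient $p'\in\R^{n-1}$ such that both interface traces are squeezed between the paraboloids $u(0)+p'\cdot x'\pm \tfrac{1}{2\e}|x'|^2$. With this boundary trap and the Pucci bounds~\ref{hyp:ue2}, Corollary~\ref{cor1} applied to both $u$ and $v$ on the upper half yields differentiability at $0$ from above, with $Du(0)=(p',\alpha)$ and $Dv(0)=(p',\beta)$. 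Comparing the ratios $(u(0,t)-u(0))/t$ and $(v(0,t)-v(0))/t$ as $t\to 0^+$ and using $u\leq v$ gives $\alpha\leq\beta$.

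We next show the eikonal analog $\alpha\geq\rho_0^{(\eta)}$, where $\rho_0^{(\eta)}:=\min\{\rho:H((p',\rho))+\eta\leq 0\}$. The upper paraboloid bound on the trace of $u$, combined with Lemma~\ref{lem:hopf} applied to the operator $H+\eta'$ for $\eta'\nearrow\eta$ (the strict gap required by that lemma being furnished by~\ref{hyp:sub2}), gives the lower-half bound $u(x)\leq u(0)+p'\cdot x'+\rho_0^{(\eta)}x_n+\tfrac{1}{2\e}|x'|^2$ near $0$. If $\alpha<\rho_0^{(\eta)}$, then for any $\gamma\in(\alpha,\rho_0^{(\eta)})$ the $C^2$ paraboloid
\[
\varphi(x):=u(0)+p'\cdot x'+\gamma x_n+\tfrac{1}{2\e}|x'|^2+Cx_n^2,
\]
with $C$ large, dominates $u$ on the upper half (by Corollary~\ref{cor1} and $\gamma>\alpha$) and on the lower half (by the Hopf-Lax bound and $\gamma<\rho_0^{(\eta)}$), so it is a valid test function touching $u$ from above at $0$. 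Hypothesis~\ref{hyp:sub2} then forces $H((p',\gamma))+\eta\leq 0$, contradicting $\gamma<\rho_0^{(\eta)}$. Hence $\alpha\geq\rho_0^{(\eta)}$, and in particular $\beta\geq\rho_0^{(\eta)}>\rho_0:=\min\{\rho:H((p',\rho))\leq 0\}$, the strict inequality being guaranteed by $\eta>0$ and the bounded sub-level set hypothesis on $H$.

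The symmetric construction on the super-solution side then delivers the contradiction: we build a $C^2$ test function $\psi$ touching $v$ from below at $0$ with gradient $(p',\gamma^*)$ for some $\gamma^*\in(\rho_0,\rho_0^{(\eta)})$. Quasi-convexity of $H$ places $(p',\gamma^*)$ in the interior of $\{H\leq 0\}$, so $H((p',\gamma^*))<0$, contradicting~\ref{hyp:sup2}. The upper-half inequality $\psi\leq v$ will be immediate from the lower bound in Corollary~\ref{cor1} and the strict inequality $\gamma^*<\beta$. On the lower half we combine $v\geq u$ with the pointwise bound $u(x)-u(0)\geq\phi^-(x):=\min_{q\in\{H+\eta\leq 0\}}q\cdot x$ from Corollary~\ref{cor:H_below}, and add a sufficiently large concave quadratic correction to $\psi$ to secure the touching without disturbing the contact at $0$.

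The main technical obstacle we anticipate is the lower-half verification of $\psi\leq v$: since $v$ is only assumed Lipschitz there and the sole structural information beyond the super-solution condition is $v\geq u$, the concave quadratic correction must be tuned carefully against the strict gap $\rho_0^{(\eta)}-\rho_0$, the Lipschitz constant on the lower half, and the geometry of $\{H+\eta\leq 0\}$ captured by $\phi^-$, so as to absorb the linear discrepancy between $(p',\gamma^*)$ and that set. Once this touching from below is secured, the super-solution condition~\ref{hyp:sup2} contradicts $H((p',\gamma^*))<0$ and the lemma is established.
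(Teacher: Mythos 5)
Your opening two paragraphs are correct and track the paper closely: the common tangential subgradient $p'$, the two-paraboloid trap of the traces, the differentiability at $0$ from Corollary~\ref{cor1}, and $\alpha\le\beta$ are exactly the paper's~\eqref{eq2} and Steps~2--3; and your derivation of $\alpha\ge\rho_0^{(\eta)}$ via the Hopf--Lax bound of Lemma~\ref{lem:hopf} and Corollary~\ref{cor:H_below}, followed by a paraboloid test function for $u$, is essentially the content of the paper's Steps~3--4 rearranged. (The paper instead shows $\alpha\le\partial_n v_+(0)\le\rho_{1/2}$ and then extends $u\le\varphi_{3/4}^+$ across the interface; the two bounds being incompatible is another way of packaging the same contradiction.)

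The genuine gap is in your third step, the ``symmetric construction on the super-solution side.'' To touch $v$ from below at $0$ by a $C^2$ paraboloid $\psi$ with $D\psi(0)=(p',\gamma^*)$, you must verify $\psi\le v$ on the lower half, and the tools you allot for it do not suffice. You propose $v\ge u\ge u(0)+\phi^-$ with $\phi^-(x)=\min_{q\in\{H+\eta\le0\}}q\cdot x$ plus a large concave correction $-Cx_n^2$. But along the ray $x=-te_n$, $t\to0^+$, one has $\phi^-(-te_n)=-t\max_{q\in\{H+\eta\le0\}}q_n$, while $\psi(-te_n)-u(0)=-\gamma^*t-Ct^2$. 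Since $\max_q q_n\ge\rho_1>\gamma^*$, the discrepancy $\psi-(u(0)+\phi^-)$ is $\sim(\max_q q_n-\gamma^*)t>0$ at first order; no quadratic correction can absorb a positive $O(t)$ term. The root cause is a real asymmetry: sub-solutions of a coercive first-order equation are Lipschitz and enjoy a Hopf--Lax \emph{upper} bound, but super-solutions admit no such one-sided modulus, so there is no ``symmetric'' Hopf--Lax lower envelope for $v$ to invoke. The paper's Step~1 is precisely the substitute: it builds the lower barrier $\min(\varphi_{1/2}^-,\phi_0^-+c)$ with $c:=\inf_{\partial B_r}(\phi_1^--\phi_0^-)>0$, proves this minimum is itself a viscosity sub-solution via the lattice property for quasi-convex Hamiltonians (Lemma~\ref{lem:inf_conv}), checks it lies below $u\le v$ on $\partial(B_r\cap\{x_n<0\})$, and then invokes the classical comparison principle for $H$ on the lower half-ball against $v$ to conclude $\varphi_{1/2}^-\le v$ near $0$. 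Without an argument of this kind your Step~3 does not close. A secondary, smaller slip: quasi-convexity alone does not give $H((p',\gamma^*))<0$ strictly for every $\gamma^*\in(\rho_0,\rho_1)$ (the operator may vanish on a sub-interval); you should take $\gamma^*$ strictly between $\rho_{1/2}$ and $\rho_1$, where the convex sub-level set $\{H\le-\eta/2\}$ forces $H\le-\eta/2<0$, or argue by continuity with $\gamma^*$ close to $\rho_1$.
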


In the following proof we will use once again the notation $u_\pm$ for the restriction of a given function to $\W_\pm\cup\G$.

\begin{proof}
Assume by contradiction that $u\leq v$ with $u(0)=v(0)=0$ (without loss of generality). By the convexity hypothesis on $u$ and $v$ we get that for some $p_*'\in \R^{n-1}$
\begin{align}\label{eq2}
-\tfrac{1}{2\e}|x'|^2 + p_*'\cdot x' \leq u(x',0) \leq v(x',0) \leq \tfrac{1}{2\e}|x'|^2 + p_*'\cdot x' \text{ in } B_1^{n-1}.
\end{align}

Given $t\in[0,1]$ consider
\begin{align*}
\r_t &:= \min\{\r \in \R \ | \ H(p_*',\r)+ \eta t\leq 0\},\\
\phi^\pm_t(x) &:= \pm\max_{p\in \{H+\eta t\leq 0\}}(\pm p\cdot x),\\
\varphi^\pm_t(x) &:= \pm \tfrac{1}{2\e}|x'|^2 + p_*'\cdot x' + \r_t x_n.
\end{align*}

Thanks to the Lemma \ref{lem:H_cons} we known that $\r_t$ are well defined as finite numbers, and by the continuity of $H$
\[
0\leq t_1<t_2\leq 1 \qquad\Rightarrow\qquad\r_{t_1}<\r_{t_2}\qquad\Rightarrow\qquad \begin{cases}
    \varphi_{t_1}^\pm > \varphi_{t_2}^\pm \text{ in } \{x_n<0\},\\
    \varphi_{t_1}^\pm < \varphi_{t_2}^\pm \text{ in } \{x_n>0\}.
\end{cases}
\]

The functions $\phi^\pm_t$ also have a monotonicity property
\[
0\leq t_1<t_2\leq 1 \qquad\Rightarrow\qquad \begin{cases}
    \phi^-_{t_1}<\phi^-_{t_2} \text{ in } \R^n\sm\{0\},\\
    \phi^+_{t_1}>\phi^+_{t_2} \text{ in } \R^n\sm\{0\}.
\end{cases} .
\]

By Corollary \ref{cor:phi_m_sub} we have that $\phi^-_t$ satisfies in the viscosity sense,
\begin{align}\label{eq:phi_m_eq}
H(D\phi_t^-) + \eta t = 0 \text{ in }\R^n.
\end{align}
Meanwhile, $\phi^+_t$ satisfies instead
\[
H(D\phi_t^+) + \eta t \leq 0 \text{ in }\R^n.
\]

By Corollary \ref{cor:H_below} we obtain that for every $x,y\in B_1\cap\{x_n\leq0\}$
\begin{align}\label{eq:H_below}
\phi^-_1(x-y) \leq u(x) - u(y) \leq \phi^+_1(x-y).
\end{align}

Finally we also point out that $\varphi_t^\pm(x) = -\tfrac{1}{2\e}|x'|^2 + p_*'\cdot x'$ on $\{x_n=0\}$  such that
\begin{align}\label{eq:varphi_main}
\varphi_t^-\leq u\leq v\leq \varphi_t^+ \text{ in }B_1^{n-1}
\end{align}
and at the origin we get that $H(D\varphi_t^\pm(0)) =- \eta t$.

\textbf{1.} Our first goal is to show that for some radius $r \in(0,1)$ sufficiently small,
\begin{align}\label{eq3}
\varphi^-_{1/2} \overset{?}{\leq} v \text{ in } B_r\cap \{x_n\leq 0\}.
\end{align}

Let $r\in(0,1)$ be a sufficiently small radius such that $H(D\varphi^-_{1/2}) \in (-\eta,0)$ in $B_{r}$, which is possible because of continuity and $H(D\varphi^-_{1/2})=-\eta/2$. Consider now the function
\[
\psi := \min(\varphi^-_{1/2},\phi^-_0+c), \qquad c:= \inf_{\partial B_r} (\phi^-_1-\phi^-_0)>0.
\]

By construction and the use of \eqref{eq:varphi_main} and \eqref{eq:H_below} we have that $\psi\leq v$ on $\p(B_{r}\cap\{x_n< 0\})$. Over $B_r\cap\{x_n=0\}$ we use that $\psi\leq \varphi^-_{1/2}\leq v$. Over $\p B_r\cap\{x_n<0\}$ we use that $\psi\leq \phi^-_1\leq u\leq v$. Moreover, $\psi$ is also sub-solution of the equation $H\psi\leq 0$ in $B_r^{n-1}\times(-r,0)$. This is a consequence of Lemma \ref{lem:inf_conv}. By comparison, we conclude that $\psi\leq v$ in $B_{r}\cap\{x_n\leq  0\}$.

Finally notice that by continuity $\psi=\varphi_{1/2}^-$ in some neighborhood of the origin. With this we have shown that \eqref{eq3} must hold for some sufficiently small radius.

\textbf{2.} Now we look at the other side, $\{x_n\geq0\}$. The new goal is to see that
\[
\p_n v_+(0)\overset{?}{\leq} \r_{1/2}.
\]

This derivative is well defined thanks to the Lemma \ref{cor1}. If we assume by contradiction that the opposite inequality holds, then Lemma \ref{cor1} and the previous lower bound for $v_-$ would imply that the paraboloid $\varphi^-_{1/2}$ touches $v$ from below at the origin. This contradicts $v$ being a super-solution of $Hv\geq 0$ given that $H(D\varphi^-_{1/2}(0)) = -\eta/2<0$.

\textbf{3.} Let us show now that for some radius $r\in(0,1)$
\begin{align}\label{eq:3mt}
\varphi^+_{3/4} \overset{?}{\geq} u \text{ in } B_r\cap\{x_n\geq0\}.
\end{align}

Lemma \ref{cor1} applies to $u_+$, so we get that $\p_nu_+(0)$ is well defined. From the contact of $v_+$ and $u_+$ at the origin we get that $\p_nu_+(0)\leq \p_nv_+(0) \leq \r_{1/2}$. Moreover, by the same lemma we conclude that for some $r\in(0,1)$ sufficiently small
\[
u_+-\varphi^+_{1/2} \leq (\r_{3/4}-\r_{1/2})x_n \text{ in } B_r\cap\{x_n\geq 0\}.
\]
This inequality is equivalent to \eqref{eq:3mt}.

\textbf{4.} As a final step we will extend the bound in \eqref{eq:3mt} to the other side
\[
\varphi^+_{3/4} \overset{?}{\geq} u \text{ in } B_r.
\]

By \eqref{eq:H_below} we know that
\[
u(x) \leq \psi(x) := \inf_{y'\in B_1\cap \{x_n=0\}}\varphi^+_1 (y') + \phi^+_1(x-y') \text{ for } x\in B_1\cap\{x_n\leq0\}.
\]
By Lemma \ref{lem:hopf} we know that $\psi\leq\varphi^+_{3/4}$ in $B_r$.

This last step concludes the proof as it contradicts $u$ being a sub-solution of $Hu+\eta\leq 0$. The test function $\varphi^+_{3/4}$ satisfies at the contact point that $H(D\varphi_{3/4}^+(0)) = -3\eta/4>-\eta$.
\end{proof}

\subsubsection{Proof of Theorem \ref{thm:comp}}

Assume by contradiction that $u$ and $v$ satisfy the hypotheses of the theorem, however $u\leq v$ on $\p\W$ and $m := \sup_\W (u-v)>0$.

Recall the constructions from Section \ref{sec:inf_sup} and consider the sup and inf-convolutions (in the directions parallel to $\{x_n=0\}$) of $u$ and $v$ respectively and with respect to some parameter $\e>0$. Let $M= \max(\|u\|_{L^\8(\overline\W)},\|v\|_{L^\8(\overline\W)})$ and $r=2\sqrt{M\e}$. By the uniform convergence convergence of $u^\e\searrow u$ and $v_\e\nearrow v$, we have that for $\e$ sufficiently small we can enforce $\max_{\p\W^r} (u^\e-v_\e) \leq m/4$ meanwhile $\sup_{\W^r} (u^\e-v_\e)\geq m/2$.

The hypotheses \ref{hyp:hm} and \ref{hyp:hp}, together with the assumptions $u,v \in C(\overline\W)\cap C^{0,1}(\W_-)$ allows to get that for $\e$ even smaller, $u^\e$ is a sub-solution of
\begin{align*}
\begin{cases}
    H_-u+3\eta/4\leq 0 \text{ in } \W^r_-\cup\G^r,\\
    H_+u+3\eta/4\leq 0 \text{ in } \W^r_+,
\end{cases}
\end{align*}
meanwhile $v_\e$ is a super-solution of
\begin{align*}
\begin{cases}
    H_-v+\eta/4 \geq 0 \text{ in } \W^r_-\cup\G^r,\\
    H_+v+\eta/4\geq 0 \text{ in } \W^r_+.
\end{cases}
\end{align*}

If $\argmax_{\W_r}(u^\e-v_\e)\cap (\W_-^r\cup\W_+^r)\neq \emptyset$, then we get a contradiction by the standard comparison principle. Let us assume then that there is some $x_0 \in \argmax_{\W_r}(u^\e-v_\e) \cap \G^r$ and $c=(u^\e-v_\e)(x_0)>0$. Therefore, the function $u^\e$ touches $v_\e+c$ from below at $x_0 \in \G$ over some small neighborhood $\overline{B_\r(x_0)}\ss\W^r$.

Let $H_0(p) := H_-(p,u(x_0),x_0)$. By the continuity of the solutions and the operators, we get that for $\r>0$ sufficiently small, the function $u^\e$ is a viscosity sub-solution of
\[
H_0u +5\eta/8 \leq 0 \text{ in } B_\r(x_0)\cap\{x_n\leq 0\},
\]
and $v_\e+c$ is a viscosity super-solution of
\[
H_0v + 3\eta/8 \geq 0 \text{ in } B_\r(x_0)\cap\{x_n\leq 0\}.
\]
By taking $H:=H_0+3\eta/8$ we get that $u^\e$ and $v_\e+c$ satisfy the translation invariant equations \ref{hyp:sub2} and \ref{hyp:sup2} respectively for the gap $\eta/4>0$.

As in Lemma \ref{lem:lift}, we consider now $\bar u\in C(\overline{B_\r(x_0)})$ to be the lifting of $u^\e$ constructed from the boundary value problem
\[
\begin{cases}
    H_+\bar u + 3\eta/4 = 0 \text{ in } B_\r(x_0)\cap\{x_n>0\},\\
    \bar u = u^\e \text{ on } \overline{B_\r(x_0)} \sm (B_\r(x_0)\cap\{x_n>0\}).
\end{cases}
\]
In a similar way we let $\bar v\in C(\overline{B_\r(x_0)})$ to be given by
\[
\begin{cases}
    H_+\bar v + \eta/4 = 0 \text{ in } B_\r(x_0)\cap\{x_n>0\},\\
    \bar v = v_\e+c \text{ on } \overline{B_\r(x_0)} \sm (B_\r(x_0)\cap\{x_n>0\}).
\end{cases}
\]
The comparison principle for uniformly elliptic equations guarantee that we still have that $\bar u$ touches $\bar v$ from below at $x_0$. The advantage of this construction is that now both $\bar u$ and $\bar v$ now satisfy \ref{hyp:ue2} in $B_r(x_0)$, meanwhile they keep satisfying equations \ref{hyp:sub2} and \ref{hyp:sup2}, by the same argument towards the end in the proof of Lemma \ref{lem:lift}. 

Now that we have all the hypotheses from Lemma \ref{main:lem}, we we have the desired contradiction as $\bar u$ is not allowed to touch $\bar v$ from below at $x_0\in\Gamma$.\qed

\bibliographystyle{plain}
\bibliography{mybibliography}

\end{document}